\documentclass[reqno,english]{amsart}
\usepackage[T1]{fontenc}
\usepackage[latin9]{inputenc}
\setcounter{secnumdepth}{2}
\setcounter{tocdepth}{1}
\usepackage{babel}
\usepackage{array}
\usepackage{refstyle}
\usepackage{float}
\usepackage{units}
\usepackage{mathrsfs}
\usepackage{mathtools}
\usepackage{enumitem}
\usepackage{amstext}
\usepackage{amsthm}
\usepackage{amssymb}
\usepackage{graphicx}
\usepackage[all]{xy}
\PassOptionsToPackage{normalem}{ulem}
\usepackage{ulem}
\usepackage[unicode=true,pdfusetitle,
 bookmarks=true,bookmarksnumbered=false,bookmarksopen=false,
 breaklinks=false,pdfborder={0 0 1},backref=false,colorlinks=false]
 {hyperref}
\hypersetup{
 colorlinks=true,citecolor=blue,linkcolor=blue,linktocpage=true}

\makeatletter


\AtBeginDocument{\providecommand\thmref[1]{\ref{thm:#1}}}
\AtBeginDocument{\providecommand\secref[1]{\ref{sec:#1}}}
\AtBeginDocument{\providecommand\figref[1]{\ref{fig:#1}}}
\AtBeginDocument{\providecommand\defref[1]{\ref{def:#1}}}
\AtBeginDocument{\providecommand\lemref[1]{\ref{lem:#1}}}
\AtBeginDocument{\providecommand\corref[1]{\ref{cor:#1}}}
\AtBeginDocument{\providecommand\tabref[1]{\ref{tab:#1}}}
\AtBeginDocument{\providecommand\exaref[1]{\ref{exa:#1}}}
\AtBeginDocument{\providecommand\remref[1]{\ref{rem:#1}}}
\providecommand{\tabularnewline}{\\}
\RS@ifundefined{subsecref}
  {\newref{subsec}{name = \RSsectxt}}
  {}
\RS@ifundefined{thmref}
  {\def\RSthmtxt{theorem~}\newref{thm}{name = \RSthmtxt}}
  {}
\RS@ifundefined{lemref}
  {\def\RSlemtxt{lemma~}\newref{lem}{name = \RSlemtxt}}
  {}

\numberwithin{equation}{section}
\numberwithin{figure}{section}
\numberwithin{table}{section}
\theoremstyle{plain}
\newtheorem{thm}{\protect\theoremname}[section]
\theoremstyle{plain}
\newtheorem{lem}[thm]{\protect\lemmaname}
\theoremstyle{definition}
\newtheorem{defn}[thm]{\protect\definitionname}
\theoremstyle{remark}
\newtheorem{rem}[thm]{\protect\remarkname}
\theoremstyle{definition}
\newtheorem{example}[thm]{\protect\examplename}
\theoremstyle{plain}
\newtheorem{cor}[thm]{\protect\corollaryname}
\theoremstyle{remark}
\newtheorem*{acknowledgement*}{\protect\acknowledgementname}

\@ifundefined{date}{}{\date{}}
\allowdisplaybreaks
\usepackage{needspace}
\usepackage{refstyle}
\usepackage{enumitem}
\usepackage{bbm}

\newref{lem}{refcmd={Lemma \ref{#1}}}
\newref{thm}{refcmd={Theorem \ref{#1}}}
\newref{cor}{refcmd={Corollary \ref{#1}}}
\newref{sec}{refcmd={Section \ref{#1}}}
\newref{sub}{refcmd={Section \ref{#1}}}
\newref{subsec}{refcmd={Section \ref{#1}}}
\newref{chap}{refcmd={Chapter \ref{#1}}}
\newref{prop}{refcmd={Proposition \ref{#1}}}
\newref{exa}{refcmd={Example \ref{#1}}}
\newref{tab}{refcmd={Table \ref{#1}}}
\newref{rem}{refcmd={Remark \ref{#1}}}
\newref{def}{refcmd={Definition \ref{#1}}}
\newref{fig}{refcmd={Figure \ref{#1}}}

\setlist[enumerate]{itemsep=5pt,topsep=3pt}
\setlist[enumerate,1]{label=\textup{(}\roman*\textup{)},ref=\roman*}
\setlist[enumerate,2]{label=(\alph*),ref=\theenumi \alph*}

\usepackage{tikz}
\usetikzlibrary{decorations.pathreplacing}
\newcommand{\tikzmark}[1]{\tikz[overlay,remember picture,baseline=(#1.base)]
  \node (#1) {\strut};}

\AtBeginDocument{
  
}

\makeatother

\providecommand{\acknowledgementname}{Acknowledgement}
\providecommand{\corollaryname}{Corollary}
\providecommand{\definitionname}{Definition}
\providecommand{\examplename}{Example}
\providecommand{\lemmaname}{Lemma}
\providecommand{\remarkname}{Remark}
\providecommand{\theoremname}{Theorem}

\begin{document}
\title[]{Decomposition of Gaussian processes, and factorization of positive
definite kernels}
\author{Palle Jorgensen}
\address{(Palle E.T. Jorgensen) Department of Mathematics, The University of
Iowa, Iowa City, IA 52242-1419, U.S.A. }
\email{palle-jorgensen@uiowa.edu}
\urladdr{http://www.math.uiowa.edu/\textasciitilde jorgen/}
\author{Feng Tian}
\address{(Feng Tian) Department of Mathematics, Hampton University, Hampton,
VA 23668, U.S.A.}
\email{feng.tian@hamptonu.edu}
\begin{abstract}
We establish a duality for two factorization questions, one for general
positive definite (p.d) kernels $K$, and the other for Gaussian processes,
say $V$. The latter notion, for Gaussian processes is stated via
Ito-integration. Our approach to factorization for p.d. kernels is
intuitively motivated by matrix factorizations, but in infinite dimensions,
subtle measure theoretic issues must be addressed. Consider a given
p.d. kernel $K$, presented as a covariance kernel for a Gaussian
process $V$. We then give an explicit duality for these two seemingly
different notions of factorization, for p.d. kernel $K$, vs for Gaussian
process $V$. Our result is in the form of an explicit correspondence.
It states that the analytic data which determine the variety of factorizations
for $K$ is the exact same as that which yield factorizations for
$V$. Examples and applications are included: point-processes, sampling
schemes, constructive discretization, graph-Laplacians, and boundary-value
problems.
\end{abstract}

\subjclass[2000]{Primary 47L60, 46N30, 46N50, 42C15, 65R10, 05C50, 05C75, 31C20, 60J20;
Secondary 46N20, 22E70, 31A15, 58J65, 81S25, 68T05.}
\keywords{Reproducing kernel Hilbert space, frames, generalized Ito-integration,
the measurable category, analysis/synthesis, interpolation, Gaussian
free fields, non-uniform sampling, optimization, transform, covariance,
feature space.}

\maketitle
\tableofcontents{}

\section{Introduction}

We give an integrated approach to positive definite (p.d.) kernels
and Gaussian processes, with an emphasis on factorizations, and their
applications. Positive definite kernels serve as powerful tools in
such diverse areas as Fourier analysis, probability theory, stochastic
processes, boundary theory, potential theory, approximation theory,
interpolation, signal/image analysis, operator theory, spectral theory,
mathematical physics, representation theory, complex function-theory,
moment problems, integral equations, numerical analysis, boundary-value
problems for partial differential equations, machine learning, geometric
embedding problems, and information theory. While there is no single
book which covers all these applications, the reference \cite{MR3526117}
goes some of the way. As for the use of RKHS analysis in machine learning,
we refer to \cite{MR2327597} and \cite{MR3236858}.

Here, we give a new and explicit duality for positive definite functions
(kernels) on the one hand, and Gaussian processes on the other. A
covariance kernel for a general stochastic process is positive definite.
In general, the stochastic process in question is not determined by
its covariance kernel. But in the special case when the process is
Gaussian, it is. In fact (\thmref{C1}), every p.d. kernel $K$ is
indeed the covariance kernel of a Gaussian process. The construction
is natural; starting with the p.d. kernel $K$, there is a canonical
inductive limit construction leading to the Gaussian process for this
problem, following a realization of Gaussian processes dating back
to Kolmogorov. The interplay between analytic properties of p.d. kernels
and their associated Gaussian processes is the focus of our present
study.

We formulate two different factorization questions, one for general
p.d. kernels $K$, and the other for Gaussian processes, say $V$.
The latter notion, for Gaussian processes, is a subordination approach.
Our approach to factorization for p.d. kernels is directly motivated
by matrix factorizations, but in infinite dimensions, there are subtle
measure theoretic issues involved. If the given p.d. kernel $K$ is
already presented as a covariance kernel for a Gaussian process $V$,
we then give an explicit duality for these two seemingly different
notions of factorization. Our main result, \thmref{E1}, states that
the analytic data which determine the variety of factorizations for
$K$ is the exact same as that which yield factorizations for $V$.

\section{\label{sec:pdk}Positive definite kernels}

The notion of a positive definite (p.d.) kernel has come to serve
as a versatile tool in a host of problems in pure and applied mathematics.
The abstract notion of a p.d. kernel is in fact a generalization of
that of a positive definite function, or a positive-definite matrix.
Indeed, the matrix-point of view lends itself naturally to the particular
factorization question which we shall address in \secref{fac} below.
The general idea of p.d. kernels arose first in various special cases
in the first half of 20th century: It occurs in work by J. Mercer
in the context of solving integral operator equations; in the work
of G. Szeg\H{o} and S. Bergmann in the study of harmonic analysis
and the theory of complex domains; and in the work by N. Aronszajn
in boundary value problems for PDEs. It was Aronszajn who introduced
the natural notion of reproducing kernel Hilbert space (RKHS) which
will play a central role here; see especially (\ref{eq:A4}) below.
References covering the areas mentioned above include: \cite{MR3687240,MR0051437,MR562914,IM65,jorgensen2018harmonic,MR0277027,MR3882025},
and \cite{MR3721329}.

Right up to the present, p.d. kernels have arisen as powerful tools
in many and diverse areas of mathematics. A partial list includes
the areas listed above in the Introduction. An important new area
of application of RKHS theory includes the following \cite{MR1120274,MR1200633,MR1473250,MR1821907,MR1873434,MR1986785,MR2223568,MR2373103}.

\subsection*{Positive definite kernels and their reproducing kernel Hilbert spaces}

Let $X$ be a set and let $K$ be a complex valued function on $X\times X$.
We say that $K$ is \emph{positive definite} (p.d.) iff (Def.) for
all finite subset $F$ ($\subset X$) and complex numbers $\left(\xi_{x}\right)_{x\in F}$,
we have: 
\begin{equation}
\sum_{x\in F}\sum_{y\in F}\overline{\xi}_{x}\xi_{y}K\left(x,y\right)\geq0.\label{eq:A1}
\end{equation}
In other words, the $\left|F\right|\times\left|F\right|$ matrix $\left(K\left(x,y\right)\right)_{F\times F}$
is positive definite in the usual sense of linear algebra. We refer
to the rich literature regarding theory and applications of p.d. functions
\cite{MR2966130,MR3507188,HKL14,RAKK05,MR3290453,MR3046303,MR2982692}.

We shall also need the Aronszajn \cite{MR0051437} reproducing kernel
Hilbert spaces (R.K.H.S.), denoted $\mathscr{H}\left(K\right)$: It
is the Hilbert completion of all functions 
\begin{equation}
\sum_{x\in F}\xi_{x}K\left(\cdot,x\right)\label{eq:A2}
\end{equation}
 where $F$, and $\left(\xi\right)_{x\in F}$, are as above.

If $F$ (finite) is fixed, and $\left(\xi_{x}\right)_{x\in F}$, $\left(\eta_{x}\right)_{x\in F}$
are vectors in $\mathbb{C}^{\left|F\right|}$, we set 
\begin{equation}
\left\langle \sum\nolimits _{x\in F}\xi_{x}K\left(\cdot,x\right),\sum\nolimits _{y\in F}\eta_{y}K\left(\cdot,y\right)\right\rangle _{\mathscr{H}\left(K\right)}:=\sum\sum\nolimits _{F\times F}\overline{\xi}_{x}\eta_{y}K\left(x,y\right).\label{eq:A3}
\end{equation}

With the definition of the R.K.H.S. $\mathscr{H}\left(K\right)$,
we get directly that the functions $\left\{ K\left(\cdot,x\right)\right\} _{x\in X}$
are automatically in $\mathscr{H}\left(K\right)$; and that, for all
$h\in\mathscr{H}\left(K\right)$, we have 
\begin{equation}
\left\langle K\left(\cdot,x\right),h\right\rangle _{\mathscr{H}\left(K\right)}=h\left(x\right);\label{eq:A4}
\end{equation}
i.e., the \emph{reproducing} property holds.

Further recall (see e.g. \cite{MR3526117}) that, given $K$, then
the R.K.H.S. $\mathscr{H}\left(K\right)$ is determined uniquely,
up to isometric isomorphism in Hilbert space.
\begin{lem}
\label{lem:B1}Let $X\times X\xrightarrow{\;K\;}\mathbb{C}$ be a
p.d. kernel, and let $\mathscr{H}\left(K\right)$ be the corresponding
RKHS (see (\ref{eq:A3})-(\ref{eq:A4})). Let $h$ be a function defined
on $X$; then TFAE:
\begin{enumerate}
\item \label{enu:LemA2-1}$h\in\mathscr{H}\left(K\right)$;
\item \label{enu:LemA2-2}there is a constant $C=C_{h}<\infty$ such that,
for all finite subset $F\subset X$, and all $\left(\xi_{x}\right)_{x\in F}$,
$\xi_{x}\in\mathbb{C}$, the following \uline{a priori} estimate
holds:
\begin{equation}
\left|\sum\nolimits _{x\in F}\xi_{x}h\left(x\right)\right|^{2}\leq C_{h}\sum\nolimits _{x\in F}\sum\nolimits _{y\in F}\overline{\xi}_{x}\xi_{y}K\left(x,y\right).\label{eq:B5}
\end{equation}
\end{enumerate}
\end{lem}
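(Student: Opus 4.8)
The plan is to recognize this as the standard characterization of membership in an RKHS and to prove the two implications separately, with essentially all of the work going into $(\ref{enu:LemA2-2})\Rightarrow(\ref{enu:LemA2-1})$. For the direction $(\ref{enu:LemA2-1})\Rightarrow(\ref{enu:LemA2-2})$, I would assume $h\in\mathscr{H}(K)$ and combine the reproducing property (\ref{eq:A4}) with Cauchy--Schwarz. Writing $f=\sum_{x\in F}\xi_{x}K(\cdot,x)$, the identity (\ref{eq:A4}) lets me express $\sum_{x\in F}\xi_{x}h(x)$ as an inner product of $f$ against $h$ in $\mathscr{H}(K)$ (up to conjugation of the coefficients, which I would track carefully against the convention in (\ref{eq:A3})). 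Cauchy--Schwarz then yields (\ref{eq:B5}) with the explicit constant $C_{h}=\lVert h\rVert_{\mathscr{H}(K)}^{2}$, and in fact this is the smallest admissible constant, since $\lVert f\rVert_{\mathscr{H}(K)}^{2}$ is precisely the right-hand side of (\ref{eq:B5}).

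For the converse the strategy is to manufacture the putative element of $\mathscr{H}(K)$ directly via Riesz representation. On the dense subspace $\mathrm{span}\{K(\cdot,x):x\in X\}\subset\mathscr{H}(K)$ I would define a (conjugate-)linear functional $L$ by $L\big(\sum_{x\in F}\xi_{x}K(\cdot,x)\big):=\sum_{x\in F}\overline{\xi}_{x}h(x)$, this formula being forced by what (\ref{eq:A4}) would return if $h$ were already known to lie in $\mathscr{H}(K)$. The whole role of hypothesis (\ref{enu:LemA2-2}) is that it renders $L$ both well defined and bounded in one stroke: applying (\ref{eq:B5}) with $\overline{\xi}$ in place of $\xi$, its right-hand side becomes exactly $\lVert\sum_{x\in F}\xi_{x}K(\cdot,x)\rVert_{\mathscr{H}(K)}^{2}$, so the estimate reads $\lvert L(f)\rvert^{2}\leq C_{h}\lVert f\rVert_{\mathscr{H}(K)}^{2}$.

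This single inequality carries the argument: it shows $L$ annihilates the null vectors, hence descends to a genuine functional independent of the representation of $f$ as a finite sum, and it shows $\lVert L\rVert\leq\sqrt{C_{h}}$, so $L$ extends continuously to all of $\mathscr{H}(K)$. The Riesz representation theorem then produces a vector $h^{*}\in\mathscr{H}(K)$ with $L(f)=\langle f,h^{*}\rangle_{\mathscr{H}(K)}$ for every $f$. To finish I would test this identity on $f=K(\cdot,x)$: the left side equals $h(x)$ by the definition of $L$, while the right side equals $h^{*}(x)$ by the reproducing property (\ref{eq:A4}). Therefore $h=h^{*}$ pointwise on $X$, so $h$ coincides with an element of $\mathscr{H}(K)$, which is exactly $(\ref{enu:LemA2-1})$.

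I expect the only genuine subtlety to be the well-definedness step. The functions $\{K(\cdot,x)\}_{x\in X}$ need not be linearly independent, so distinct coefficient families $(\xi_{x})$ can represent the same vector $f$; it is precisely (\ref{eq:B5}) that forces $L$ to assign them the same value, and this is the point at which the a priori estimate is indispensable rather than merely convenient. Beyond that, the main care needed is bookkeeping of the conjugate-linearity conventions in (\ref{eq:A3}) so that the inner-product identities and the Riesz step are aligned correctly.
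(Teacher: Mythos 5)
Your proposal is correct and follows essentially the same route as the paper: the forward direction via the reproducing property and Cauchy--Schwarz with $C_{h}=\lVert h\rVert_{\mathscr{H}(K)}^{2}$, and the converse by defining a functional on the dense span (\ref{eq:A2}), using (\ref{eq:B5}) for well-definedness and boundedness, invoking Riesz, and identifying the representing vector with $h$ by testing against $K(\cdot,x)$ via (\ref{eq:A4}). Your extra care with the conjugation conventions of (\ref{eq:A3}) and with the possible linear dependence of the $K(\cdot,x)$ only makes explicit what the paper's proof leaves implicit.
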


\begin{proof}
The implication (\ref{enu:LemA2-1})$\Rightarrow$(\ref{enu:LemA2-2})
is immediate, and in this case, we may take $C_{h}=\left\Vert h\right\Vert _{\mathscr{H}\left(K\right)}^{2}$.

Now for the converse, assume (\ref{enu:LemA2-2}) holds for some finite
constant. On the $\mathscr{H}\left(K\right)$-dense span in (\ref{eq:A2}),
define a linear functional 
\begin{equation}
L_{h}\left(\sum\nolimits _{x\in F}\xi_{x}K\left(\cdot,x\right)\right):=\sum\nolimits _{x\in F}\xi_{x}h\left(x\right).\label{eq:B6}
\end{equation}
From the assumption (\ref{eq:B5}) in (\ref{enu:LemA2-2}), we conclude
that $L_{h}$ (in (\ref{eq:B6})) is a well defined bounded linear
functional on $\mathscr{H}\left(K\right)$. Initially, $L_{h}$ is
only defined on the span (\ref{eq:A2}), but by (\ref{eq:B5}), it
is bounded, and so extends uniquely by $\mathscr{H}\left(K\right)$-norm
limits. We may therefore apply Riesz' lemma to the Hilbert space $\mathscr{H}\left(K\right)$,
and conclude that there is a unique $H\in\mathscr{H}\left(K\right)$
such that 
\begin{equation}
L_{h}\left(\psi\right)=\left\langle \psi,H\right\rangle _{\mathscr{H}\left(K\right)}\label{eq:B7}
\end{equation}
for all $\psi\in\mathscr{H}\left(K\right)$. Now, setting $\psi\left(\cdot\right):=K\left(\cdot,x\right)$,
for $x\in X$, we conclude from (\ref{eq:B7}) that $h\left(x\right)=H\left(x\right)$;
and so $h\in\mathscr{H}\left(K\right)$, proving (\ref{enu:LemA2-1}).
\end{proof}

\section{\label{sec:gp}Gaussian processes}

The interest in positive definite (p.d.) functions has at least three
roots: (i) Fourier analysis, and harmonic analysis more generally;
(ii) Optimization and approximation problems, involving for example
spline approximations as envisioned by I. Sch\"oenberg; and (iii)
Stochastic processes. See \cite{MR0004644,MR709376}.

Below, we sketch a few details regarding (iii). A \emph{stochastic
process} is an indexed family of random variables based on a fixed
probability space. In some cases, the processes will be indexed by
some group $G$, or by a subset of $G$. For example, $G=\mathbb{R}$,
or $G=\mathbb{Z}$, correspond to processes indexed by real time,
respectively discrete time. A main tool in the analysis of stochastic
processes is an associated \emph{covariance function}.

A process $\{X_{g}\mid g\in G\}$ is called \emph{Gaussian} if each
random variable $X_{g}$ is Gaussian, i.e., its distribution is Gaussian.
For Gaussian processes, we only need two moments. So if we normalize,
setting the mean equal to $0$, then the process is determined by
its covariance function. In general, the covariance function is a
function on $G\times G$, or on a subset, but if the process is \emph{stationary},
the covariance function will in fact be a p.d. function defined on
$G$, or a subset of $G$. For a systematic study of positive definite
functions on groups $G$, on subsets of groups, and the variety of
the extensions to p.d. functions on $G$, see e.g. \cite{MR3559001}.

By a theorem of Kolmogorov \cite{MR735967}, every Hilbert space may
be realized as a (Gaussian) reproducing kernel Hilbert space (RKHS),
see \thmref{C1} below, and also \cite{PaSc75,IM65,NF10}.

Now every positive definite kernel is also the covariance kernel of
a Gaussian process; a fact which is a point of departure in our present
analysis: Given a positive definite kernel, we shall explore its use
in the analysis of the associated Gaussian process; and vice versa.

This point of view is especially fruitful when one is dealing with
problems from stochastic analysis. Even restricting to stochastic
analysis, we have the exciting area of applications to statistical
learning theory \cite{MR2327597,MR3236858}.\\

Let $\left(\Omega,\mathscr{F},\mathbb{P}\right)$ be a \emph{probability
space}, i.e., $\Omega$ is a fixed set (sample space), $\mathscr{F}$
is a specified sigma-algebra (events) of subsets in $\Omega$, and
$\mathbb{P}$ is a probability measure on $\mathscr{F}$.

A Gaussian random variable is a function $V:\Omega\rightarrow\mathbb{R}$
(in the real case), or $V:\Omega\rightarrow\mathbb{C}$, such that
$V$ is measurable with respect to the sigma-algebra $\mathscr{F}$
on $\Omega$, and the corresponding sigma-algebra of Borel subsets
in $\mathbb{R}$ (or in $\mathbb{C}$). Let $\mathbb{E}$ denote the
expectation defined from $\mathbb{P}$, i.e., 
\begin{equation}
\mathbb{E}\left(\cdots\right)=\int_{\Omega}\left(\cdots\right)d\mathbb{P}.\label{eq:A5}
\end{equation}
The requirement on $V$ is that its distribution is Gaussian. If $g$
denotes a Gaussian on $\mathbb{R}$ (or on $\mathbb{C}$), the requirement
is that 
\begin{equation}
\mathbb{E}\left(f\circ V\right)=\int_{\mathbb{R}\left(\text{or \ensuremath{\mathbb{C}}}\right)}f\,dg;\label{eq:A6}
\end{equation}
or equivalently
\begin{equation}
\mathbb{P}\left(V\in B\right)=\int_{B}dg=g\left(B\right)\label{eq:A7}
\end{equation}
for all Borel sets $B$; see \figref{G1}.

\begin{figure}
\includegraphics[width=0.35\textwidth]{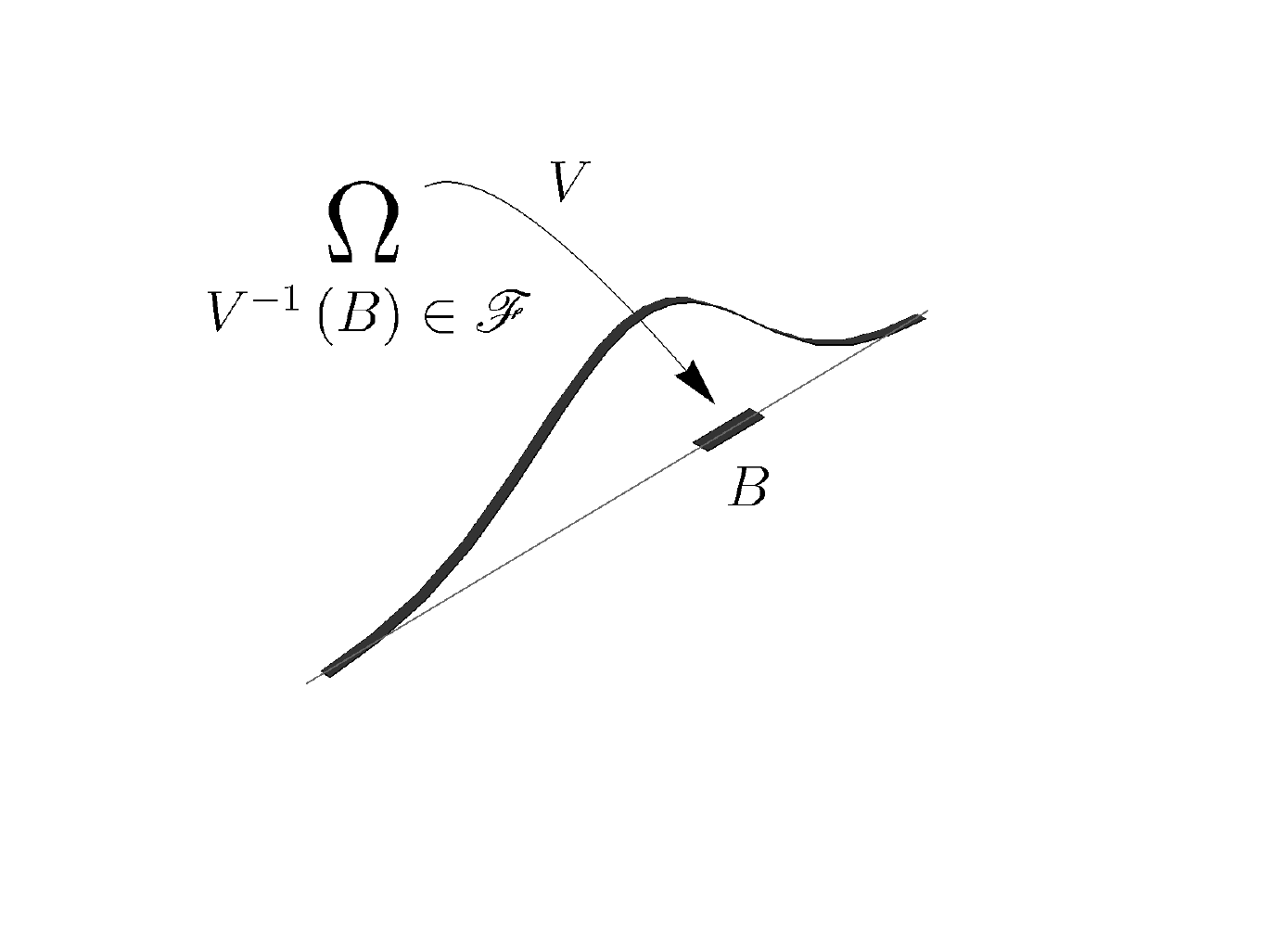}

\caption{\label{fig:G1}A Gaussian random variable and its distribution, see
(\ref{eq:A7}).}
\end{figure}

If $N\in\mathbb{N}$, and $V_{1},\cdots,V_{N}$ are random variables,
the Gaussian requirement is (see \figref{G2}) that the joint distribution
of $\left(V_{1},\cdots,V_{N}\right)$ is an $N$-dimensional Gaussian,
say $g_{N}$, so if $B\subset\mathbb{R}^{N}$ then 
\begin{equation}
\mathbb{P}\left(\left(V_{1},\cdots,V_{N}\right)\in B\right)=g_{N}\left(B\right).\label{eq:A8}
\end{equation}

\begin{figure}
\includegraphics[width=0.35\textwidth]{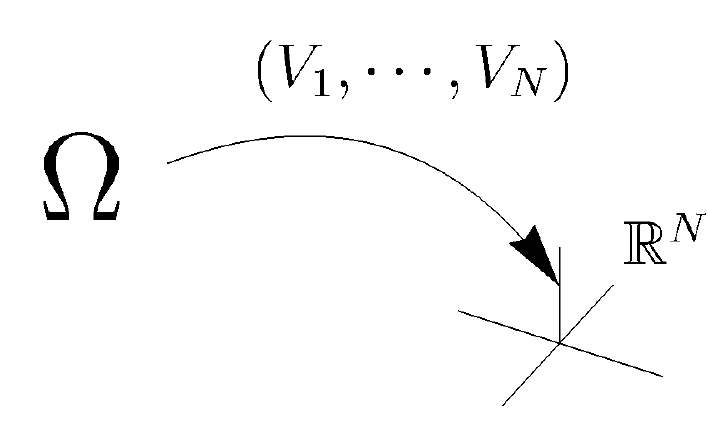}

\caption{\label{fig:G2} A Gaussian system and its joint distribution, see
(\ref{eq:A8}).}

\end{figure}

For our present purpose we may restrict to the case where the mean
(of the respective Gaussians) is assumed zero. In that case, a finite
joint distribution is determined by its covariance matrix. In the
$\mathbb{R}^{N}$ case, it is specified as follows (the extension
to $\mathbb{C}^{N}$ is immediate) $\left(G_{N}\left(j_{1},j_{2}\right)\right)_{j_{1},j_{2}=1}^{N}$,
\begin{equation}
G_{N}\left(j_{1},j_{2}\right)=\int_{\mathbb{R}^{N}}x_{j_{1}}x_{j_{2}}g_{N}\left(x_{1},\cdots,x_{N}\right)dx_{1}\cdots dx_{N}\label{eq:A9}
\end{equation}
where $dx_{1}\cdots dx_{N}=\lambda_{N}$ denotes the standard Lebesgue
measure on $\mathbb{R}^{N}$.

The following is known:
\begin{thm}[Kolmogorov \cite{MR0133175}, see also \cite{MR562914,MR1176778}]
\label{thm:C1}A kernel $K:X\times X\rightarrow\mathbb{C}$ is positive
definite if and only if there is a (mean zero) Gaussian process $\left(V_{x}\right)_{x\in X}$
indexed by $X$ such that 
\begin{equation}
\mathbb{E}\left(\overline{V}_{x}V_{y}\right)=K\left(x,y\right)\label{eq:A10}
\end{equation}
where $\overline{V}_{x}$ denotes complex conjugation.

Moreover (see Hida \cite{MR0301806,MR1176778}), the process in (\ref{eq:A10})
is uniquely determined by the kernel $K$ in question. If $F\subset X$
is finite, then the covariance kernel for $\left(V_{x}\right)_{x\in F}$
is $K_{F}$ given by 
\begin{equation}
K_{F}\left(x,y\right)=G_{F}\left(x,y\right),\label{eq:A11}
\end{equation}
for all $x,y\in F$, see (\ref{eq:A9}) above.
\end{thm}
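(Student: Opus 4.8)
The plan is to prove the two implications separately and then address uniqueness; the reverse implication is a one-line computation, while the forward implication is the substance and rests on the Kolmogorov extension construction.

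The direction asserting that a covariance kernel is positive definite is immediate. Given a mean-zero Gaussian process $\left(V_{x}\right)_{x\in X}$ satisfying \eqref{eq:A10}, fix a finite $F\subset X$ and scalars $\left(\xi_{x}\right)_{x\in F}$ in $\mathbb{C}$; then, by linearity of $\mathbb{E}$,
\[
\sum_{x\in F}\sum_{y\in F}\overline{\xi}_{x}\xi_{y}K\left(x,y\right)=\sum_{x\in F}\sum_{y\in F}\overline{\xi}_{x}\xi_{y}\,\mathbb{E}\left(\overline{V}_{x}V_{y}\right)=\mathbb{E}\left(\Big|\sum_{x\in F}\xi_{x}V_{x}\Big|^{2}\right)\geq0,
\]
which is exactly \eqref{eq:A1}.

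For the forward implication I would build the process from its prescribed finite-dimensional distributions. For each finite $F\subset X$ the matrix $K_{F}:=\left(K\left(x,y\right)\right)_{x,y\in F}$ is positive semidefinite by hypothesis, so (working first over $\mathbb{R}$, the complex case following by identifying $\mathbb{C}$ with $\mathbb{R}^{2}$) the function $t\mapsto\exp\left(-\tfrac{1}{2}\,t^{\mathsf{T}}K_{F}t\right)$ on $\mathbb{R}^{F}$ is continuous, equals $1$ at $t=0$, and is itself positive definite; by Bochner's theorem it is the characteristic function of a unique mean-zero Gaussian probability measure $g_{F}$ on $\mathbb{R}^{F}$ with covariance $K_{F}$. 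Using characteristic functions rather than Gaussian densities is the crux, since $K_{F}$ may be singular, in which case no density exists. I would then verify the Kolmogorov consistency conditions for the family $\left\{ g_{F}\right\}$: for $F\subset F'$ the coordinate restriction $\mathbb{R}^{F'}\to\mathbb{R}^{F}$ pushes $g_{F'}$ forward to the measure whose characteristic function is obtained by setting the coordinates in $F'\setminus F$ equal to $0$, and since $K_{F}$ is precisely the principal submatrix of $K_{F'}$ indexed by $F$, this pushforward is again $g_{F}$; invariance under permutations of the index set is clear. Kolmogorov's extension theorem \cite{MR0133175} then yields a probability space $\left(\Omega,\mathscr{F},\mathbb{P}\right)$ with $\Omega=\prod_{x\in X}\mathbb{R}$, carrying the cylinder $\sigma$-algebra, whose finite-dimensional marginals are exactly the $g_{F}$. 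Setting $V_{x}\left(\omega\right):=\omega\left(x\right)$ produces a jointly Gaussian family, and reading the second moments off $g_{\left\{ x,y\right\} }$ gives \eqref{eq:A10}, while \eqref{eq:A11} holds because the covariance of $\left(V_{x}\right)_{x\in F}$ is by construction $K_{F}=G_{F}$.

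Uniqueness is then a statement about finite-dimensional distributions: the characteristic function of a mean-zero jointly Gaussian vector is determined by its covariance matrix alone, so any two mean-zero Gaussian processes with covariance $K$ have identical finite-dimensional distributions and hence agree as processes in the sense of Hida \cite{MR1176778}. The only genuine obstacle is the possible degeneracy of the $K_{F}$: because these submatrices need not be invertible, the naive ``write down a Gaussian density'' approach fails, and it is precisely the Bochner/characteristic-function route that makes the construction uniform in $F$ and lets the consistency check reduce to the trivial observation that restriction of the quadratic form corresponds to passing to a principal submatrix.
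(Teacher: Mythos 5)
Your proposal is correct, and in fact the paper offers no proof of \thmref{C1} at all: it is stated as a known result and delegated to the cited references, so the standard Kolmogorov-extension argument you give (easy direction via $\mathbb{E}\bigl(\bigl|\sum_{x\in F}\xi_{x}V_{x}\bigr|^{2}\bigr)\geq0$, hard direction via characteristic functions, Bochner, consistency of principal submatrices, and the extension theorem) is exactly what the citation to \cite{MR0133175} is invoking, and your emphasis on the characteristic-function route to handle singular $K_{F}$ is the right technical point. The only step worth making explicit is the complex case: for a Hermitian p.d. kernel one must choose the real $2\left|F\right|\times2\left|F\right|$ covariance $\tfrac{1}{2}\bigl(\begin{smallmatrix}\Re K_{F} & -\Im K_{F}\\ \Im K_{F} & \Re K_{F}\end{smallmatrix}\bigr)$ (the circularly symmetric complex Gaussian) so that $\mathbb{E}\left(\overline{V}_{x}V_{y}\right)=K\left(x,y\right)$ comes out right; this is standard but not quite automatic from ``identifying $\mathbb{C}$ with $\mathbb{R}^{2}$.''
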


In the subsequent sections, we shall address a number of properties
of Gaussian processes important for their stochastic calculus. Our
analysis deals with both the general case, and particular examples
from applications. We begin in \secref{sms} with certain Wiener processes
which are indexed by sigma-finite measures. For this class, the corresponding
p.d. kernel has a special form; see (\ref{eq:C1}) in \defref{D1}.
(The case of fractal measures is part of \secref{exa} below.) In
\secref{fac}, we address the general case: We prove our duality result
for factorization, \thmref{E1}. The remaining sections are devoted
to examples and applications.

\section{\label{sec:sms}Sigma-finite measure spaces and Gaussian processes}

We shall consider functions of $\sigma$-finite measure space $\left(M,\mathscr{F}_{M},\mu\right)$
where $M$ is a set, $\mathscr{F}_{M}$ a $\sigma$-algebra of subsets
in $M$, and $\mu$ is a positive measure defined on $\mathscr{F}_{M}$.
It is further assumed that there is a countably indexed $\left(A_{i}\right)_{i\in\mathbb{N}}$
s.t. $0<\mu\left(A_{i}\right)<\infty$, $M=\cup_{i}A_{i}$; and further
that the measure space $\left(M,\mathscr{F}_{M},\mu\right)$ is complete;
so the Radon-Nikodym theorem holds. We shall also restrict to the
case when $\mu$ is assumed non-atomic. The case when $\mu$ is atomic
is different, and is addressed in \secref{atomic} below.
\begin{defn}
\label{def:D1}Set 
\[
\mathscr{F}_{fin}=\left\{ A\in\mathscr{F}_{M}\mid0<\mu\left(A\right)<\infty\right\} .
\]
\end{defn}

Note then 
\begin{equation}
K^{\left(\mu\right)}\left(A,B\right)=\mu\left(A\cap B\right),\;A,B\in\mathscr{F}_{fin}\label{eq:C1}
\end{equation}
is positive definite. The corresponding Gaussian process $(W_{A}^{\left(\mu\right)})_{A\in\mathscr{F}_{fin}}$
is called the Wiener process \cite{MR0301806,MR1176778}. In particular,
we have 
\begin{equation}
\mathbb{E}\left(W_{A}^{\left(\mu\right)}W_{B}^{\left(\mu\right)}\right)=\mu\left(A\cap B\right),\label{eq:C2}
\end{equation}
and 
\begin{equation}
\lim_{\left(A_{i}\right)}\sum_{i}\left(W_{A_{i}}^{\left(\mu\right)}\right)^{2}=\mu\left(A\right).\label{eq:C3}
\end{equation}
The precise limit in (\ref{eq:C3}), quadratic variation, is as follows:
Given $\mu$ as above, and $A\in\mathscr{F}_{fin}$, we then take
limit over the filter of all partitions of $A$ (see (\ref{eq:C4}))
relative to the standard notation of refinement:
\begin{equation}
A=\cup_{i}A_{i},\;A_{i}\cap A_{j}=\emptyset\;\text{if \ensuremath{i\neq j}},\;\text{and}\;\lim\mu\left(A_{i}\right)=0.\label{eq:C4}
\end{equation}

Details: Let $\left(\Omega,Cyl,\mathbb{P}\right)$, $\mathbb{P}=\mathbb{P}^{\left(\mu\right)}$
be the probability space which realizes $W^{\left(\mu\right)}$ as
a Gaussian process (or generalized Wiener process), i.e., s.t. (\ref{eq:C2})
holds for all pairs in $\mathscr{F}_{fin}$. In particular, we have
that $W_{A}^{\left(\mu\right)}\underset{\left(\text{dist}\right)}{\sim}N\left(0,\mu\left(A\right)\right)$,
i.e., mean zero, Gaussian, and variance = $\mu\left(A\right)$. Then:
\begin{lem}[see e.g., \cite{MR3687240}]
With the assumptions as above, we have 
\begin{equation}
\lim_{\left(A_{i}\right)}\mathbb{E}\left(\big|\mu\left(A\right)\mathbbm{1}-\sum\nolimits _{i}(W_{A_{i}}^{\left(\mu\right)})^{2}\Big|^{2}\right)=0\label{eq:C4-1}
\end{equation}
where (in (\ref{eq:C4-1})) the limit is taken over the filter of
all partitions $\left(A_{i}\right)$ of $A$, and $\mathbbm{1}$ denotes
the constant function ``one'' on $\Omega$.
\end{lem}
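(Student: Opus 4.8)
The plan is to reduce the $L^{2}$ expression in (\ref{eq:C4-1}) to the variance of $S:=\sum_{i}(W_{A_{i}}^{(\mu)})^{2}$, exploit Gaussian independence across a partition, and then control everything by the mesh $\sup_{i}\mu(A_{i})$. First I would fix $A\in\mathscr{F}_{fin}$ and a partition $(A_{i})$ of $A$ as in (\ref{eq:C4}). Since the $A_{i}$ are pairwise disjoint, (\ref{eq:C2}) gives $\mathbb{E}(W_{A_{i}}^{(\mu)}W_{A_{j}}^{(\mu)})=\mu(A_{i}\cap A_{j})=0$ for $i\neq j$; because the family $\{W_{A_{i}}^{(\mu)}\}$ is jointly Gaussian with mean zero, vanishing covariance upgrades to genuine independence. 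In particular each $W_{A_{i}}^{(\mu)}\sim N(0,\mu(A_{i}))$, so all moments are finite and the standard Gaussian moment identities give $\mathbb{E}((W_{A_{i}}^{(\mu)})^{2})=\mu(A_{i})$ and $\mathbb{E}((W_{A_{i}}^{(\mu)})^{4})=3\mu(A_{i})^{2}$.

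Next I would compute the mean of $S$. Using $\mathbb{E}((W_{A_{i}}^{(\mu)})^{2})=\mu(A_{i})$ together with countable additivity of $\mu$, we get $\mathbb{E}(S)=\sum_{i}\mu(A_{i})=\mu(A)$. Hence $S$ is an unbiased approximation to the constant $\mu(A)\mathbbm{1}$, and the left-hand side of (\ref{eq:C4-1}) collapses to $\mathrm{Var}(S)=\mathbb{E}(S^{2})-\mu(A)^{2}$. By independence the centered summands $(W_{A_{i}}^{(\mu)})^{2}-\mu(A_{i})$ are pairwise orthogonal in $L^{2}(\Omega)$, whence $\mathrm{Var}(S)=\sum_{i}\mathrm{Var}((W_{A_{i}}^{(\mu)})^{2})=\sum_{i}(3-1)\mu(A_{i})^{2}=2\sum_{i}\mu(A_{i})^{2}$.

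Finally I would dominate the resulting sum by the mesh: $\sum_{i}\mu(A_{i})^{2}\leq\big(\sup_{i}\mu(A_{i})\big)\sum_{i}\mu(A_{i})=\big(\sup_{i}\mu(A_{i})\big)\,\mu(A)$. Since the limit in (\ref{eq:C4-1}) is taken over the filter of partitions along which $\sup_{i}\mu(A_{i})\to0$, we conclude $\mathrm{Var}(S)\leq2\mu(A)\sup_{i}\mu(A_{i})\to0$, which is precisely the assertion (\ref{eq:C4-1}).

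The one genuinely delicate point --- not the arithmetic --- is the treatment of \emph{infinite} partitions: one must first know that $S=\sum_{i}(W_{A_{i}}^{(\mu)})^{2}$ converges in $L^{2}(\Omega)$ before manipulating $\mathbb{E}(S^{2})$ and interchanging the sum with the expectation. This is secured by the two bounds $\sum_{i}\mathbb{E}((W_{A_{i}}^{(\mu)})^{2})=\mu(A)<\infty$ and $\sum_{i}\mathrm{Var}((W_{A_{i}}^{(\mu)})^{2})=2\sum_{i}\mu(A_{i})^{2}<\infty$ (the latter since each $\mu(A_{i})\leq\mu(A)$), combined with the orthogonality of the centered terms, which together make the partial sums Cauchy in $L^{2}$. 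All the rearrangements above are then legitimate, and the net limit over refinements is well defined.
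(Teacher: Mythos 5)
Your proof is correct, and it is worth noting that the paper itself supplies no argument for this lemma: it is stated with only a citation to the literature, so there is no in-paper proof to compare against. What you give is the standard quadratic-variation computation that the cited sources contain: reduce the left-hand side to $\mathrm{Var}(S)$ using $\mathbb{E}(S)=\mu(A)$, use joint Gaussianity plus zero covariance to get independence of the $W_{A_{i}}^{(\mu)}$ across the disjoint cells, invoke $\mathbb{E}\big((W_{A_{i}}^{(\mu)})^{4}\big)=3\mu(A_{i})^{2}$ to get $\mathrm{Var}\big((W_{A_{i}}^{(\mu)})^{2}\big)=2\mu(A_{i})^{2}$, and dominate $\sum_{i}\mu(A_{i})^{2}$ by the mesh. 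Your attention to the $L^{2}$-convergence of the infinite sum $S$ before rearranging is exactly the right care to take, and the orthogonality of the centered summands settles it.

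One small point you should make explicit: you assert that the limit is taken "over the filter of partitions along which $\sup_{i}\mu(A_{i})\to 0$," but the paper's filter is the refinement filter on partitions of $A$, with (\ref{eq:C4}) only requiring $\lim_{i}\mu(A_{i})=0$ within each fixed partition. The bridge is that $2\sum_{i}\mu(A_{i})^{2}$ is monotone non-increasing under refinement (splitting a cell can only decrease the sum of squares), and that, because $\mu$ is assumed non-atomic in this section, every $A\in\mathscr{F}_{fin}$ admits partitions of arbitrarily small mesh; together these two facts show the variance tends to $0$ along the refinement filter. With that sentence added, the argument is complete.
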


As a result, we get the following Ito-integral 
\begin{equation}
W^{\left(\mu\right)}\left(f\right):=\int_{M}f\left(s\right)\,dW_{s}^{\left(\mu\right)},\label{eq:C6}
\end{equation}
defined for all $f\in L^{2}\left(M,\mathscr{F},\mu\right)$, and 
\begin{equation}
\mathbb{E}\left(\left|\int\nolimits _{M}f\left(s\right)dW_{s}^{\left(\mu\right)}\right|^{2}\right)=\int_{M}\left|f\left(s\right)\right|^{2}d\mu\left(s\right).\label{eq:C5}
\end{equation}
We note that the following operator, 
\begin{equation}
L^{2}\left(M,\mu\right)\ni f\longmapsto W^{\left(\mu\right)}\left(f\right)\in L^{2}\left(\Omega,\mathbb{P}\right)\label{eq:C7}
\end{equation}
is isometric.

In our subsequent considerations, we shall need the following precise
formula (see \lemref{D3}) for the RKHS associated with the p.d. kernel
\begin{equation}
K^{\left(\mu\right)}\left(A,B\right):=\mu\left(A\cap B\right),\label{eq:C8}
\end{equation}
defined on $\mathscr{F}_{fin}\times\mathscr{F}_{fin}$. We denote
the RKHS by $\mathscr{H}(K^{\left(\mu\right)})$.
\begin{lem}
\label{lem:D3}Let $\mu$ be as above, and let $K^{\left(\mu\right)}$
be the p.d. kernel on $\mathscr{F}_{fin}$ defined in (\ref{eq:C8}).
Then the corresponding RKHS $\mathscr{H}(K^{\left(\mu\right)})$ is
as follows: A function $\Phi$ on $\mathscr{F}_{fin}$ is in $\mathscr{H}(K^{\left(\mu\right)})$
if and only if there is a $\varphi\in L^{2}\left(M,\mathscr{F}_{M},\mu\right)\left(=:L^{2}\left(\mu\right)\right)$
such that 
\begin{equation}
\Phi\left(A\right)=\int_{A}\varphi\,d\mu,\label{eq:C10}
\end{equation}
for all $A\in\mathscr{F}_{fin}$. Then 
\begin{equation}
\left\Vert \Phi\right\Vert _{\mathscr{H}(K^{\left(\mu\right)})}=\left\Vert \varphi\right\Vert _{L^{2}\left(\mu\right)}.\label{eq:D11-1}
\end{equation}
\end{lem}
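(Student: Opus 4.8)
The plan is to realize $\mathscr{H}(K^{\left(\mu\right)})$ as a unitary copy of $L^{2}\left(\mu\right)$, with the vector $\varphi$ corresponding to the set-function $\Phi$ of (\ref{eq:C10}). First I would define, on the dense subspace of simple functions in $L^{2}\left(\mu\right)$, a map $T$ by sending each indicator $\mathbbm{1}_{A}$ (for $A\in\mathscr{F}_{fin}$) to the kernel section $K^{\left(\mu\right)}\left(\cdot,A\right)=\mu\left(\cdot\cap A\right)$ and extending linearly. The key computation is that $T$ preserves inner products on indicators: on the $L^{2}$ side $\langle\mathbbm{1}_{A},\mathbbm{1}_{B}\rangle_{L^{2}\left(\mu\right)}=\mu\left(A\cap B\right)$, while on the RKHS side (\ref{eq:A3}) gives $\langle K^{\left(\mu\right)}\left(\cdot,A\right),K^{\left(\mu\right)}\left(\cdot,B\right)\rangle_{\mathscr{H}(K^{\left(\mu\right)})}=K^{\left(\mu\right)}\left(A,B\right)=\mu\left(A\cap B\right)$; the two agree, so $T$ is isometric on the span of the $\mathbbm{1}_{A}$.

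Next I would note that simple functions are dense in $L^{2}\left(\mu\right)$ and each is a finite combination of indicators $\mathbbm{1}_{A}$ with $A\in\mathscr{F}_{fin}$, so $T$ extends uniquely to a global isometry $T:L^{2}\left(\mu\right)\rightarrow\mathscr{H}(K^{\left(\mu\right)})$. This isometry is onto: its range is closed and contains every section $K^{\left(\mu\right)}\left(\cdot,A\right)=T\mathbbm{1}_{A}$, and by the definition (\ref{eq:A2}) of the RKHS these sections span a dense subspace. Hence $T$ is unitary, and the norm identity (\ref{eq:D11-1}) will follow at once from $\|T\varphi\|_{\mathscr{H}(K^{\left(\mu\right)})}=\|\varphi\|_{L^{2}\left(\mu\right)}$, once $T\varphi$ is identified with $\Phi$.

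The decisive step is to verify that the abstract vector $T\varphi$ --- a priori only a Hilbert-space limit --- agrees pointwise with the concrete set-function $A\mapsto\int_{A}\varphi\,d\mu$. For this I would use the reproducing property (\ref{eq:A4}): for each $A\in\mathscr{F}_{fin}$, $(T\varphi)(A)=\langle K^{\left(\mu\right)}\left(\cdot,A\right),T\varphi\rangle_{\mathscr{H}(K^{\left(\mu\right)})}=\langle T\mathbbm{1}_{A},T\varphi\rangle_{\mathscr{H}(K^{\left(\mu\right)})}=\langle\mathbbm{1}_{A},\varphi\rangle_{L^{2}\left(\mu\right)}=\int_{A}\varphi\,d\mu$, the third equality being the isometry of $T$. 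This single identity yields both implications: if $\Phi=T\varphi$ then $\Phi$ has the form (\ref{eq:C10}) and lies in $\mathscr{H}(K^{\left(\mu\right)})$; conversely, any $\Phi\in\mathscr{H}(K^{\left(\mu\right)})$ equals $T\varphi$ for a unique $\varphi$ (surjectivity and injectivity of $T$) and is therefore given by (\ref{eq:C10}).

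I expect this last pointwise identification to be the only genuinely subtle point, since it is where the passage from the $\mathscr{H}(K^{\left(\mu\right)})$-norm to the numerical value $\int_{A}\varphi\,d\mu$ must be controlled; the reproducing property settles it cleanly, but it is reassuring to check directly, via Cauchy--Schwarz, that $|\int_{A}(\varphi_{n}-\varphi)\,d\mu|\le\mu\left(A\right)^{1/2}\|\varphi_{n}-\varphi\|_{L^{2}\left(\mu\right)}\to0$ along an approximating sequence of simple functions $\varphi_{n}\to\varphi$, so that the numerical values converge compatibly with the RKHS limit. Finally, the ``if'' direction together with the inequality $\|\Phi\|_{\mathscr{H}(K^{\left(\mu\right)})}\le\|\varphi\|_{L^{2}\left(\mu\right)}$ can alternatively be obtained without the completion argument by invoking \lemref{B1}: for $\Phi\left(A\right)=\int_{A}\varphi\,d\mu$ and any finite $F\subset\mathscr{F}_{fin}$ with coefficients $(\xi_{A})$, Cauchy--Schwarz gives $|\sum_{A}\xi_{A}\Phi\left(A\right)|^{2}=|\int(\sum_{A}\xi_{A}\mathbbm{1}_{A})\varphi\,d\mu|^{2}\le\|\varphi\|_{L^{2}\left(\mu\right)}^{2}\sum_{A}\sum_{B}\overline{\xi}_{A}\xi_{B}\,\mu\left(A\cap B\right)$, which is exactly the a priori estimate (\ref{eq:B5}) with $C_{\Phi}=\|\varphi\|_{L^{2}\left(\mu\right)}^{2}$.
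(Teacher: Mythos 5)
Your argument is correct, and it reorganizes the proof around a single global unitary rather than the paper's two separate implications. The paper proves the forward inclusion by verifying the a priori estimate of \lemref{B1} with $C_{\Phi}=\left\Vert \varphi\right\Vert _{L^{2}\left(\mu\right)}^{2}$ (exactly the Cauchy--Schwarz computation you relegate to your closing remark), and proves the converse by applying Schwarz to $\left\langle H,\Phi\right\rangle _{\mathscr{H}(K^{\left(\mu\right)})}$ and then Riesz representation in $L^{2}\left(\mu\right)$ to produce $h$ with $H\left(A\right)=\int_{A}h\,d\mu$. You instead build the isometry $T:L^{2}\left(\mu\right)\rightarrow\mathscr{H}(K^{\left(\mu\right)})$ on indicators, observe it is onto because its closed range contains the dense family of kernel sections, and identify $T\varphi$ pointwise via the reproducing property; your surjectivity step plays the role of the paper's Riesz argument, and your pointwise identification replaces its specialization to $\varphi=\chi_{A}$. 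A concrete advantage of your route is that the norm identity (\ref{eq:D11-1}) is automatic from unitarity, whereas the paper's forward direction via \lemref{B1} only yields $\left\Vert \Phi\right\Vert _{\mathscr{H}(K^{\left(\mu\right)})}\leq\left\Vert \varphi\right\Vert _{L^{2}\left(\mu\right)}$ directly and the equality requires the density observation you make explicit. Two small points worth stating rather than leaving implicit: the well-definedness of $T$ on the span of the $\mathbbm{1}_{A}$ (if a combination vanishes $\mu$-a.e., the corresponding combination of kernel sections has zero RKHS norm by the same inner-product computation), and the fact that density of $\mathscr{F}_{fin}$-simple functions in $L^{2}\left(\mu\right)$ uses the standing $\sigma$-finiteness assumption on $\mu$.
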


\begin{proof}
To show that $\Phi$ in (\ref{eq:C10}) is in $\mathscr{H}(K^{\left(\mu\right)})$,
we must choose a finite constant $C_{\Phi}$ such that, for all finite
subset $\left(A_{i}\right)_{i=1}^{N}$, $A_{i}\in\mathscr{F}_{fin}$,
$\left\{ \xi_{i}\right\} _{i=1}^{N}$, $\xi_{i}\in\mathbb{R}$, we
get the following \emph{a priori} estimate:
\begin{equation}
\left|\sum\nolimits _{i=1}^{N}\xi_{i}\Phi\left(A_{i}\right)\right|^{2}\leq C_{\Phi}\sum\nolimits _{i}\sum\nolimits _{j}\xi_{i}\xi_{j}K^{\left(\mu\right)}\left(A_{i},A_{j}\right).\label{eq:C11}
\end{equation}
But a direct application of Schwarz to $L^{2}\left(\mu\right)$ shows
that (\ref{eq:C11}) holds, and for a finite $C_{\Phi}$, we may take
$C_{\Phi}=\left\Vert \varphi\right\Vert _{L^{2}\left(\mu\right)}^{2}$,
where $\varphi$ is the $L^{2}\left(\mu\right)$-function in (\ref{eq:C10}).
The desired conclusion now follows from an application of \lemref{B1}.

We have proved one implication from the statement of the lemma: Functions
$\Phi$ on $\mathscr{F}_{fin}$ of the formula (\ref{eq:C10}) are
in the RKHS $\mathscr{H}\left(K^{\left(\mu\right)}\right)$, and the
norm $\left\Vert \cdot\right\Vert _{\mathscr{H}\left(K^{\left(\mu\right)}\right)}$
is as stated in (\ref{eq:D11-1}). In the below, we shall denote these
elements in $\mathscr{H}\left(K^{\left(\mu\right)}\right)$ as pairs
$\left(\Phi,\varphi\right)$. We shall also restrict attention to
the case of real valued functions.

For the converse implication, let $H$ be a function on $\mathscr{F}_{fin}$,
and assume $H\in\mathscr{H}\left(K^{\left(\mu\right)}\right)$. Then
by Schwarz applied to $\left\langle \cdot,\cdot\right\rangle _{\mathscr{H}\left(K^{\left(\mu\right)}\right)}$
we get 
\begin{equation}
\left|\left\langle H,\Phi\right\rangle _{\mathscr{H}\left(K^{\left(\mu\right)}\right)}\right|\leq\left\Vert H\right\Vert _{\mathscr{H}\left(K^{\left(\mu\right)}\right)}\left\Vert \varphi\right\Vert _{L^{2}\left(\mu\right)},\label{eq:D11-2}
\end{equation}
where we used (\ref{eq:D11-1}). Hence when Schwarz is applied to
$L^{2}\left(\mu\right)$, we get a unique $h\in L^{2}\left(\mu\right)$
such that 
\begin{equation}
\left\langle H,\Phi\right\rangle _{\mathscr{H}\left(K^{\left(\mu\right)}\right)}=\int_{M}h\,\varphi\,d\mu\label{eq:D11-3}
\end{equation}
for all $\left(\Phi,\varphi\right)$ as in (\ref{eq:C10}). Now specialize
to $\varphi=\chi_{A}$, $A\in\mathscr{F}_{fin}$, in (\ref{eq:D11-3})
and we conclude that 
\begin{equation}
H\left(A\right)=\int_{A}h\,d\mu;
\end{equation}
which translates into the assertion that the pair $\left(H,h\right)$
has the desired form (\ref{eq:C10}). And hence by (\ref{eq:D11-1})
we have $\left\Vert H\right\Vert _{\mathscr{H}\left(K^{\left(\mu\right)}\right)}=\left\Vert h\right\Vert _{L^{2}\left(\mu\right)}$
as stated. This concludes the proof of the converse inclusion.
\end{proof}

\section{\label{sec:fac}Factorizations and stochastic integrals}

In Sections \ref{sec:pdk} and \ref{sec:gp}, we introduced the related
notions of positive definite (p.d.) functions (kernels) on the one
hand, and Gaussian processes on the other. One notes the immediate
fact that a covariance kernel for a general stochastic process is
positive definite. In general, the stochastic process in question
is not determined by its covariance kernel. But in the special case
when the process is Gaussian, it is.

In \thmref{C1}, we stated that every p.d. kernel $K$ is indeed the
covariance kernel of a Gaussian process. The construction is natural;
starting with the p.d. kernel $K$, there is a canonical inductive
limit construction leading to the Gaussian process for this problem.
The basic idea for this particular construction of Gaussian processes
dates back to pioneering work by Kolmogorov \cite{MR735967,MR562914}.

In the present section, we formulate two different factorization questions,
one for general p.d. kernels $K$, and the other for Gaussian processes,
say $V$. For details, see the respective definitions in (\ref{eq:D2})
and (\ref{eq:D3}) below. If $K$ is indeed the covariance kernel
for a Gaussian process $V$, it is natural to try to relate these
two seemingly different notions of factorization. (In the case of
Gaussian processes, a better name is perhaps \textquotedblleft subordination\textquotedblright{}
(see (\ref{eq:D6}) below), but our theorem justifies the use of factorization
in both of these contexts.) Our main result, \thmref{E1}, states
that the data determining factorization for $K$ is the exact same
as that which yields factorization for $V$.\\

Let $K$ be a positive definite kernel $X\times X\xrightarrow{\;K\;}\mathbb{C}$;
and let $V=V_{K}$ be the corresponding Gaussian (mean zero) process,
indexed by $X$, i.e., $V_{x}\in L^{2}\left(\Omega,\mathbb{P}\right)$,
$\forall x\in X$, and 
\begin{equation}
\mathbb{E}\left(\overline{V}_{x}V_{y}\right)=K\left(x,y\right),\;\forall\left(x,y\right)\in X\times X.\label{eq:D1}
\end{equation}
We set 
\begin{align}
\mathscr{F}\left(K\right) & :=\Big\{\left(M,\mathscr{F}_{M},\mu\right)\mid\text{s.t. }K\left(\cdot,x\right)\longmapsto k_{x}\in L^{2}\left(M,\mu\right)\label{eq:D2}\\
 & \qquad\text{extends to an isometry, i.e., }\nonumber \\
 & \qquad K\left(x,y\right)=\int_{M}\overline{k_{x}\left(s\right)}k_{y}\left(s\right)d\mu\left(s\right)=\left\langle k_{x},k_{y}\right\rangle _{L^{2}\left(\mu\right)},\;\forall x,y\in X\Big\}.\nonumber 
\end{align}
Further, if $V$ is the Gaussian process (from (\ref{eq:D1})), we
set 
\begin{align}
\mathscr{M}\left(V\right) & :=\Big\{\left(M,\mathscr{F}_{M},\mu\right)\mid\text{s.t. \ensuremath{V} admits an Ito-integral representation}\nonumber \\
 & \qquad V_{x}=\int_{M}k_{x}\left(s\right)dW_{s}^{\left(\mu\right)},\;\forall x\in X,\text{ where}\:\left\{ k_{x}\right\} _{x\in X}\text{ is an}\label{eq:D3}\\
 & \qquad\text{indexed system in \ensuremath{L^{2}\left(M,\mu\right)}}\Big\}.\nonumber 
\end{align}

Following parallel terminology from measure theory, we say that a
Gaussian process $V$ admits a disintegration, via suitable Ito-integrals,
when there is a measure space with measure $\mu$ such that the corresponding
Wiener process $W^{\left(\mu\right)}$ satisfies (\ref{eq:D3}). Our
theorem below (\thmref{E1}) shows that this disintegration question
may be decided instead by the answer to an equivalent spectral decomposition
question; the latter of course formulated for the covariance kernel
for $V$. As is shown in the examples/applications below, given a
Gaussian process, it is not at all clear what disintegrations hold;
see for example \corref{F7}.
\begin{thm}
\label{thm:E1}Let $K:X\times X\rightarrow\mathbb{C}$ be given positive
definite, and let $\left\{ V_{x}\right\} _{x\in X}$ be the corresponding
Gaussian (mean zero) process, then 
\begin{equation}
\mathscr{F}\left(K\right)=\mathscr{M}\left(V\right).\label{eq:D4}
\end{equation}
\end{thm}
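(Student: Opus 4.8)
The plan is to prove the set equality (\ref{eq:D4}) by establishing the two inclusions separately, with the Ito isometry (\ref{eq:C5})--(\ref{eq:C7}) serving as the bridge in both directions. The common thread is that, for a \emph{fixed} $\sigma$-finite measure space $\left(M,\mathscr{F}_{M},\mu\right)$, membership in either $\mathscr{F}\left(K\right)$ or $\mathscr{M}\left(V\right)$ is governed by the existence of one and the same object: an indexed system $\left\{ k_{x}\right\} _{x\in X}\subset L^{2}\left(\mu\right)$ whose Gram data reproduce $K$, i.e. $K\left(x,y\right)=\left\langle k_{x},k_{y}\right\rangle _{L^{2}\left(\mu\right)}$. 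I would note first that, by the reproducing property (\ref{eq:A4}), the condition defining $\mathscr{F}\left(K\right)$ in (\ref{eq:D2}) --- that $K\left(\cdot,x\right)\mapsto k_{x}$ extend to an isometry $\mathscr{H}\left(K\right)\hookrightarrow L^{2}\left(\mu\right)$ --- is exactly this Gram condition, since $\left\langle K\left(\cdot,x\right),K\left(\cdot,y\right)\right\rangle _{\mathscr{H}\left(K\right)}=K\left(x,y\right)$.

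For the inclusion $\mathscr{M}\left(V\right)\subseteq\mathscr{F}\left(K\right)$, I would take $\left(M,\mu\right)\in\mathscr{M}\left(V\right)$ with Ito representation $V_{x}=\int_{M}k_{x}\,dW_{s}^{\left(\mu\right)}=W^{\left(\mu\right)}\left(k_{x}\right)$ and simply compute covariances. Polarizing the Ito isometry (\ref{eq:C5}) gives $\mathbb{E}\bigl(\overline{W^{\left(\mu\right)}\left(f\right)}\,W^{\left(\mu\right)}\left(g\right)\bigr)=\left\langle f,g\right\rangle _{L^{2}\left(\mu\right)}$; applied to $f=k_{x}$, $g=k_{y}$ and combined with the defining covariance $\mathbb{E}\left(\overline{V}_{x}V_{y}\right)=K\left(x,y\right)$ from (\ref{eq:D1}), this yields $K\left(x,y\right)=\left\langle k_{x},k_{y}\right\rangle _{L^{2}\left(\mu\right)}$. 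Hence the very same system $\left\{ k_{x}\right\} $ witnesses $\left(M,\mu\right)\in\mathscr{F}\left(K\right)$. This direction is the easy one and uses nothing beyond (\ref{eq:C5}).

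For the converse $\mathscr{F}\left(K\right)\subseteq\mathscr{M}\left(V\right)$, I would start from a factorizing system $\left\{ k_{x}\right\} \subset L^{2}\left(\mu\right)$ with $K\left(x,y\right)=\left\langle k_{x},k_{y}\right\rangle _{L^{2}\left(\mu\right)}$ and manufacture the representation by running the Ito isometry \emph{forward}: on the Wiener probability space $\left(\Omega,Cyl,\mathbb{P}^{\left(\mu\right)}\right)$ attached to $\mu$, define $\widetilde{V}_{x}:=W^{\left(\mu\right)}\left(k_{x}\right)$. Since each $\widetilde{V}_{x}$ is the integral of a deterministic $L^{2}\left(\mu\right)$ function against $W^{\left(\mu\right)}$, the family $\left\{ \widetilde{V}_{x}\right\} $ is jointly Gaussian with mean zero, and by the polarized isometry its covariance is $\mathbb{E}\bigl(\overline{\widetilde{V}}_{x}\widetilde{V}_{y}\bigr)=\left\langle k_{x},k_{y}\right\rangle _{L^{2}\left(\mu\right)}=K\left(x,y\right)$. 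Thus $\widetilde{V}$ is a mean-zero Gaussian process whose covariance kernel is $K$.

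The final --- and really the only delicate --- step is to identify $\widetilde{V}$ with the given process $V$. Here I would invoke the uniqueness half of \thmref{C1} (Hida): a mean-zero Gaussian process is determined by its covariance kernel up to the canonical isomorphism of the associated Gaussian Hilbert spaces. Since $\widetilde{V}$ and $V$ share the covariance $K$, the linear map $V_{x}\mapsto\widetilde{V}_{x}$ extends to a unitary of $\overline{\mathrm{span}}\left\{ V_{x}\right\} $ onto $\overline{\mathrm{span}}\left\{ \widetilde{V}_{x}\right\} \subset L^{2}\left(\Omega,\mathbb{P}^{\left(\mu\right)}\right)$ intertwining the two systems; under this identification $V_{x}=W^{\left(\mu\right)}\left(k_{x}\right)=\int_{M}k_{x}\,dW_{s}^{\left(\mu\right)}$, so $\left(M,\mu\right)\in\mathscr{M}\left(V\right)$. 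The main obstacle to watch is precisely this point: a priori $V$ and $W^{\left(\mu\right)}$ live on different probability spaces, so the equality asserted in (\ref{eq:D3}) must be read through the Gaussian-Hilbert-space identification rather than as literal pointwise equality on a common $\Omega$. Once the uniqueness in \thmref{C1} is granted, both inclusions close and (\ref{eq:D4}) follows.
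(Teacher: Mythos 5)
Your proposal is correct and follows essentially the same route as the paper: both inclusions are driven by the (polarized) Ito isometry, the forward direction constructs $\widetilde{V}_{x}=\int_{M}k_{x}\,dW^{\left(\mu\right)}$ and invokes uniqueness of a mean-zero Gaussian process with prescribed covariance to identify it with $V$, and the reverse direction reads the Gram identity $\left\langle k_{x},k_{y}\right\rangle _{L^{2}\left(\mu\right)}=K\left(x,y\right)$ directly off the covariance. Your explicit caveat that the identification of $V$ with $W^{\left(\mu\right)}\left(k_{x}\right)$ must be understood up to equivalence of Gaussian processes (since they a priori live on different probability spaces) is exactly the point the paper also flags at the end of its Part 2.
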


\begin{proof}
We shall need the following:
\end{proof}
\begin{lem}
\label{lem:F2}From the definition of $\mathscr{F}\left(K\right)$,
with $K$ fixed and assumed p.d., we get to every $\left(\left(k_{x}\right)_{x\in X},\mu\right)\in\mathscr{F}\left(K\right)$
a natural isometry $T_{\mu}:\mathscr{H}\left(K\right)\longrightarrow L^{2}\left(M,\mu\right)$.
It is denoted by
\begin{equation}
T_{\mu}(\underset{\in\mathscr{H}\left(K\right)}{\underbrace{K\left(\cdot,x\right)}}):=k_{x}\in L^{2}\left(\mu\right);\label{eq:D4-1}
\end{equation}
and the adjoint operator $T_{\mu}^{*}:L^{2}\left(M,\mu\right)\longrightarrow\mathscr{H}\left(K\right)$
is as follows: For all $f\in L^{2}\left(M,\mu\right)$ we have 
\begin{equation}
\left(T_{\mu}^{*}f\right)\left(x\right)=\int_{M}f\left(s\right)\overline{k_{x}\left(s\right)}d\mu\left(s\right).\label{eq:D4-2}
\end{equation}
Moreover, we also have 
\begin{equation}
T_{\mu}^{*}\left(k_{x}\right)=K\left(\cdot,x\right),\;\text{for all \ensuremath{x\in X}.}\label{eq:E7-1}
\end{equation}
\end{lem}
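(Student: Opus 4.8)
The plan is to construct $T_{\mu}$ first on the dense span of $\left\{ K\left(\cdot,x\right)\right\} _{x\in X}$ inside $\mathscr{H}\left(K\right)$ and then extend by continuity. On a generic element $\psi=\sum_{x\in F}\xi_{x}K\left(\cdot,x\right)$ I would set $T_{\mu}\psi:=\sum_{x\in F}\xi_{x}k_{x}\in L^{2}\left(\mu\right)$, which is (\ref{eq:D4-1}) on the generators. The key computation is that this assignment is isometric: using the RKHS inner product (\ref{eq:A3}) together with the factorization identity built into the definition of $\mathscr{F}\left(K\right)$ (see (\ref{eq:D2})), both $\left\Vert \psi\right\Vert _{\mathscr{H}\left(K\right)}^{2}$ and $\left\Vert \sum_{x}\xi_{x}k_{x}\right\Vert _{L^{2}\left(\mu\right)}^{2}$ collapse to the same double sum $\sum_{x,y\in F}\overline{\xi}_{x}\xi_{y}K\left(x,y\right)$. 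This single identity simultaneously gives well-definedness (if $\psi=0$ in $\mathscr{H}\left(K\right)$, then by linearity $\left\Vert T_{\mu}\psi\right\Vert _{L^{2}\left(\mu\right)}=\left\Vert \psi\right\Vert _{\mathscr{H}\left(K\right)}=0$, so the value is independent of the representation of $\psi$) and inner-product preservation on the span. Since $\mathscr{H}\left(K\right)$ is by construction the completion of this span and $L^{2}\left(\mu\right)$ is complete, $T_{\mu}$ extends uniquely to an isometry $\mathscr{H}\left(K\right)\longrightarrow L^{2}\left(\mu\right)$.

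For the adjoint formula (\ref{eq:D4-2}), I would test the defining relation $\left\langle T_{\mu}h,f\right\rangle _{L^{2}\left(\mu\right)}=\left\langle h,T_{\mu}^{*}f\right\rangle _{\mathscr{H}\left(K\right)}$ on the generators $h=K\left(\cdot,x\right)$. The left-hand side becomes $\left\langle k_{x},f\right\rangle _{L^{2}\left(\mu\right)}=\int_{M}\overline{k_{x}\left(s\right)}f\left(s\right)\,d\mu\left(s\right)$, while the right-hand side, by the reproducing property (\ref{eq:A4}), equals $\left(T_{\mu}^{*}f\right)\left(x\right)$. Equating the two expressions yields (\ref{eq:D4-2}) directly; since the generators are dense and both sides are continuous in $h$, this determines $T_{\mu}^{*}f$ as a function on $X$.

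Finally, (\ref{eq:E7-1}) follows in two equivalent ways. Because $T_{\mu}$ is an isometry, $T_{\mu}^{*}T_{\mu}=\mathrm{Id}_{\mathscr{H}\left(K\right)}$, so applying this to $K\left(\cdot,x\right)$ and using $T_{\mu}K\left(\cdot,x\right)=k_{x}$ gives $T_{\mu}^{*}\left(k_{x}\right)=K\left(\cdot,x\right)$. Alternatively, I would verify it pointwise from (\ref{eq:D4-2}): $\left(T_{\mu}^{*}\left(k_{x}\right)\right)\left(y\right)=\int_{M}k_{x}\left(s\right)\overline{k_{y}\left(s\right)}\,d\mu\left(s\right)=\left\langle k_{y},k_{x}\right\rangle _{L^{2}\left(\mu\right)}=K\left(y,x\right)=K\left(\cdot,x\right)\left(y\right)$, using (\ref{eq:D2}) in the penultimate step. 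None of these steps poses a serious obstacle; the lemma is essentially a packaging of the factorization hypothesis. The one point that genuinely requires care is the bookkeeping of complex conjugation in Step~1 and in the adjoint identity, so that the conjugate-linear slot in (\ref{eq:A3}) is matched consistently against the conjugate appearing in $\left\langle k_{x},k_{y}\right\rangle _{L^{2}\left(\mu\right)}=\int_{M}\overline{k_{x}}k_{y}\,d\mu$.
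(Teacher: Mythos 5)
Your proposal is correct and follows essentially the same route as the paper: the isometry is obtained by defining $T_{\mu}$ on the dense span of the kernel functions and invoking the factorization identity in (\ref{eq:D2}), the adjoint formula is read off by testing the adjoint relation against the generators $K\left(\cdot,x\right)$ via the reproducing property, and (\ref{eq:E7-1}) is checked by the same inner-product computation the paper performs. Your alternative one-line derivation of (\ref{eq:E7-1}) from $T_{\mu}^{*}T_{\mu}=\mathrm{Id}_{\mathscr{H}\left(K\right)}$ is a slightly cleaner packaging of the same fact, and your conjugation bookkeeping is consistent with the paper's convention that both inner products are conjugate-linear in the first slot.
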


\begin{proof}
Since $\left(k_{x},\mu\right)\in\mathscr{F}\left(K\right)$, we have
the factorization property (\ref{eq:D2}), and so it follows from
(\ref{eq:D4-1}) that this extends by linearity and norm-completion
to an isometry $\mathscr{H}\left(K\right)\xrightarrow{\;T_{\mu}\;}L^{2}\left(\mu\right)$
as stated.

By the definition of the adjoint operator $L^{2}\left(\mu\right)\xrightarrow{\;T_{\mu}^{*}\;}\mathscr{H}\left(K\right)$,
we have for $f\in L^{2}\left(\mu\right)$: 
\[
\left(T_{\mu}^{*}f\right)\left(x\right)=\left\langle K\left(\cdot,x\right),T_{\mu}^{*}f\right\rangle _{\mathscr{H}\left(K\right)}=\left\langle k_{x},f\right\rangle _{L^{2}\left(\mu\right)}=\int_{M}f\left(s\right)\overline{k_{x}\left(s\right)}d\mu\left(s\right),
\]
which is the assertion in the lemma.

From the properties of $\mathscr{H}\left(K\right)$ (see \secref{pdk}),
it follows that (\ref{eq:E7-1}) holds iff 
\begin{equation}
\left\langle K\left(\cdot,y\right),T_{\mu}^{*}\left(k_{x}\right)\right\rangle _{\mathscr{H}\left(K\right)}=\left\langle K\left(\cdot,y\right),K\left(\cdot,x\right)\right\rangle _{\mathscr{H}\left(K\right)}\label{eq:E7-2}
\end{equation}
for all $y\in X$. But we may compute both sides in eq. (\ref{eq:E7-2})
as follows: 
\begin{eqnarray*}
\text{LHS}_{\left(\ref{eq:E7-2}\right)} & = & \left\langle T_{\mu}K\left(\cdot,y\right),k_{x}\right\rangle _{L^{2}\left(\mu\right)}\\
 & \underset{\text{by \ensuremath{\left(\ref{eq:D4-1}\right)}}}{=} & \left\langle k_{y},k_{x}\right\rangle _{L^{2}\left(\mu\right)}\\
 & \underset{\text{by \ensuremath{\left(\ref{eq:D2}\right)}}}{=} & K\left(y,x\right)\\
 & \underset{\text{by \ensuremath{\left(\ref{eq:A3}\right)}}}{=} & \text{RHS}_{\left(\ref{eq:E7-2}\right)}.
\end{eqnarray*}
\end{proof}
\begin{proof}[Proof of \thmref{E1} continued]
The proof is divided into two parts, one for each of the inclusions
$\subseteq$ and $\supseteq$ in (\ref{eq:D4}).

\textbf{Part 1 ``$\subseteq$''.} Assume a pair $\left(\left(k_{x}\right)_{x\in X},\mu\right)$
is in $\mathscr{F}\left(K\right)$; see (\ref{eq:D2}). Then by definition,
the factorization (\ref{eq:D3}) holds on $X\times X$. Now let $W^{\left(\mu\right)}$
denote the Wiener process associated with $\mu$, i.e., $W^{\left(\mu\right)}$
is a Gaussian process indexed by $\mathscr{F}_{fin}$, and 
\begin{equation}
\mathbb{E}\left(W_{A}^{\left(\mu\right)}W_{B}^{\left(\mu\right)}\right)=\mu\left(A\cap B\right),\label{eq:D5}
\end{equation}
for all $A,B\in\mathscr{F}_{fin}$; see (\ref{eq:C1}) above. Now
form the Ito-integral 
\begin{equation}
V_{x}:=\int_{M}k_{x}\left(s\right)dW_{s}^{\left(\mu\right)},\;x\in X.\label{eq:D6}
\end{equation}
We stress that then $V_{x}$, as defined by (\ref{eq:D6}), is a Gaussian
process indexed by $X$. To see this, use the general theory of Ito-integration,
see also \cite{MR3882025,MR3701541,MR3574374,MR3721329,MR3670916,MR0301806,MR562914}.
The approximation in (\ref{eq:D6}) is over the filter of all \emph{partitions
}
\begin{equation}
\left\{ A_{i}\right\} _{i\in\mathbb{N}}\text{ s.t. }A_{i}\cap A_{j}=\emptyset,\:i\neq j,\;\cup_{i\in\mathbb{N}}A_{i}=M,\;\text{and}\;0<\mu\left(A_{i}\right)<\infty;\label{eq:D7}
\end{equation}
see (\ref{eq:C4}). From the property of $W_{A_{i}}^{\left(\mu\right)}$,
$i\in\mathbb{N}$, we conclude that, for all $s_{i}\in A_{i}$, we
have that 
\begin{equation}
\sum_{i\in\mathbb{N}}k_{x}\left(s_{i}\right)W_{A_{i}}^{\left(\mu\right)}\label{eq:D8}
\end{equation}
is Gaussian (mean zero) with 
\begin{align}
\mathbb{E}\left|\sum\nolimits _{i}k_{x}\left(s_{i}\right)W_{A_{i}}^{\left(\mu\right)}\right|^{2} & =\sum\nolimits _{i}\sum\nolimits _{j}\overline{k_{x}\left(s_{i}\right)}k_{x}\left(s_{j}\right)\mu\left(A_{i}\cap A_{j}\right)\nonumber \\
 & =\sum\nolimits _{i}\left|k_{x}\left(s_{i}\right)\right|^{2}\mu\left(A_{i}\right);\label{eq:D9}
\end{align}
where we used (\ref{eq:D7}). Passing to the limit over the filter
of all partitions of $M$ (as in (\ref{eq:D7})), we then get 
\[
\mathbb{E}\left(\int_{M}\overline{k_{x}\left(s\right)}dW_{s}^{\left(\mu\right)}\int_{M}k_{y}\left(t\right)dW_{t}^{\left(\mu\right)}\right)=\int_{M}\overline{k_{x}\left(s\right)}k_{y}\left(s\right)d\mu\left(s\right);
\]
and with definition (\ref{eq:D6}), therefore: 
\begin{equation}
\mathbb{E}\left(\overline{V}_{x}V_{y}\right)=\left\langle k_{x},k_{y}\right\rangle _{L^{2}\left(\mu\right)}=K\left(x,y\right),\;\forall\left(x,y\right)\in X\times X,\label{eq:D10}
\end{equation}
where the last step in the derivation (\ref{eq:D10}) uses the assumption
that $\left(\left(k_{x}\right)_{x\in X},\mu\right)\in\mathscr{F}\left(K\right)$;
see (\ref{eq:D2}).

\textbf{Part 2 ``$\supseteq$''.} Assume now that some pair $\left(\left(k_{x}\right)_{x\in X},\mu\right)$
is in $\mathscr{M}\left(V\right)$ where $K$ is given assumed p.d.;
and where $\left(V_{x}\right)_{x\in X}$ is ``the'' associated (mean
zero) Gaussian process; i.e., with $K$ as its covariance kernel;
see (\ref{eq:D1}).

We claim that $\left(\left(k_{x}\right)_{x\in X},\mu\right)$ must
then be in $\mathscr{F}\left(K\right)$, i.e., that the factorization
(\ref{eq:D3}) holds. This in turn follows from the following chain
of identities: 
\begin{alignat}{2}
\left\langle k_{x},k_{y}\right\rangle _{L^{2}\left(\mu\right)} & =\mathbb{E}\left(\overline{V}_{x}V_{y}\right) & \quad & \big(\text{since \ensuremath{V_{x}=\int_{M}k_{x}\left(s\right)dW_{s}^{\left(\mu\right)}}}\big)\nonumber \\
 & =K\left(x,y\right) &  & \big(\text{since \ensuremath{K} is the covariance kernel}\label{eq:D11}\\
 &  &  & \text{ of the Gaussian process \ensuremath{\left(V_{x}\right)_{x\in X}}}\big)\nonumber 
\end{alignat}
valid for $\forall\left(x,y\right)\in X\times X$, and the conclusion
follows. Note that the first step in the derivation of (\ref{eq:D11})
uses the Ito-isometry. Hence, initially $K$ may possibly be the covariance
kernel for a mean zero Gaussian process, say $\left(V'_{x}\right)$,
different from $V_{x}:=\int_{M}k_{x}\left(s\right)dW_{s}^{\left(\mu\right)}$.
But we proved that the two Gaussian processes $V_{x}$, and $V_{x}'$,
have the same covariance kernel. It follows then the two processes
must be equivalent. This is by general theory; see e.g. \cite{MR0277027,MR2053326,MR3687240}.

The last uniqueness is only valid since we can consider Gaussian processes.
Other stochastic processes are typically not determined uniquely from
the respective covariance kernels.
\end{proof}
\begin{rem}
In the statement of \thmref{E1} there are two isometries: Starting
with $\left(\left(k_{x}\right)_{x\in X},\mu\right)\in\mathscr{F}\left(K\right)$
we get the canonical isometry $T_{\mu}:\mathscr{H}\left(K\right)\rightarrow L^{2}\left(\mu\right)$
given by 
\begin{equation}
T_{\mu}\left(K\left(\cdot,x\right)\right)=k_{x};
\end{equation}
see (\ref{eq:D4-1}) of \lemref{F2}. But with $\mu$, we then also
get the Wiener process $W^{\left(\mu\right)}$ and the Ito-integral
\begin{equation}
L^{2}\left(M,\mu\right)\ni f\longmapsto\int_{M}f\,dW^{\left(\mu\right)}\in L^{2}\left(\Omega,Cyl,\mathbb{P}\right)
\end{equation}
as an isometry. Here $\left(\Omega,Cyl,\mathbb{P}\right)$ denotes
the standard probability space, with $Cyl$ abbreviation for the cylinder
sigma-algebra of subsets of $\Omega:=\mathbb{R}^{M}$. For finite
subsets $\left(s_{1},s_{2},\cdots,s_{k}\right)$ in $M$, and Borel
subsets $B_{k}$ in $\mathbb{R}^{k}$, the corresponding cylinder
set 
\[
Cyl\left(\left(s_{i}\right)_{i=1}^{k}\right):=\left\{ \omega\in\mathbb{R}^{M}\mathrel{;}\left(\omega\left(s_{1}\right),\cdots,\omega\left(s_{k}\right)\right)\in B_{k}\right\} .
\]

In summary, we get the the following diagram of isometries, corresponding
to a fixed $\left(\left(k_{x}\right)_{x\in X},\mu\right)\in\mathscr{F}\left(K\right)$,
where $K$ is a fixed p.d. function on $X\times X$:

\begin{figure}[H]
\[
\xymatrix{\mathscr{H}\left(K\right)\ar@/^{1.5pc}/[rr]^{T_{\mu}}\ar@/_{1.2pc}/[dr]_{\text{composition}} &  & L^{2}\left(M,\mu\right)\ar@/^{1.2pc}/[dl]^{\text{Ito-isometry for \ensuremath{W^{\left(\mu\right)}}}}\\
 & L^{2}\left(\Omega,\mathbb{P}\right)
}
\]

\caption{The two isometries. Factorizations by isometries.}

\end{figure}
\end{rem}

\section{\label{sec:exa}Examples and applications}

Below we present four examples in order to illustrate the technical
points in \thmref{E1}. In the first example $X=\left[0,1\right]$,
the unit interval, and in the next two examples $X=\mathbb{D}=\left\{ z\in\mathbb{C}\mathrel{;}\left|z\right|<1\right\} $
the open complex disk. In the fourth example, the Drury-Arveson kernel,
we have $X=\mathbb{C}^{k}$.

We begin with a note on identifications: For $t\in\left[0,1\right]$,
we set 
\[
e\left(t\right):=e^{i2\pi t}.
\]
We write $\lambda_{1}$ for the Lebesgue measure restricted to $\left[0,1\right]$;
and we make the identification: 
\begin{equation}
\left[0,1\right]\cong\mathbb{R}/\mathbb{Z}\cong\mathbb{T}^{1}=\left\{ z\in\mathbb{C}\mathrel{;}\left|z\right|=1\right\} .\label{F1}
\end{equation}
Hence, for $L^{2}\left(\left[0,1\right],\lambda_{1}\right)$ we have
the familiar Fourier expansion: With 
\begin{equation}
f\in L^{2}\left(\lambda_{1}\right),\;\text{and}\;c_{n}:=\int_{\left[0,1\right]}\overline{e\left(nt\right)}f\left(t\right)d\lambda_{1}\left(t\right),\;n\in\mathbb{Z},
\end{equation}
\begin{equation}
f\left(t\right)=\sum_{n\in\mathbb{Z}}c_{n}e\left(nt\right),\;\text{and}\;\int_{0}^{1}\left|f\left(t\right)\right|^{2}d\lambda_{1}\left(t\right)=\sum_{n\in\mathbb{Z}}\left|c_{n}\right|^{2}.
\end{equation}

On $\left[0,1\right]$, we shall also consider the Cantor measure
$\mu_{4}$ with support equal to the Cantor set 
\[
C_{4}=\left\{ x\mathrel{;}x=\sum\nolimits _{k=1}^{\infty}b^{k}/4^{k},\;b_{k}\in\left\{ 0,2\right\} \right\} \subseteq\left[0,1\right];
\]
see \figref{F1} and \cite{MR1655831,jorgensen2018harmonic}.

\begin{figure}
\includegraphics[width=0.35\textwidth]{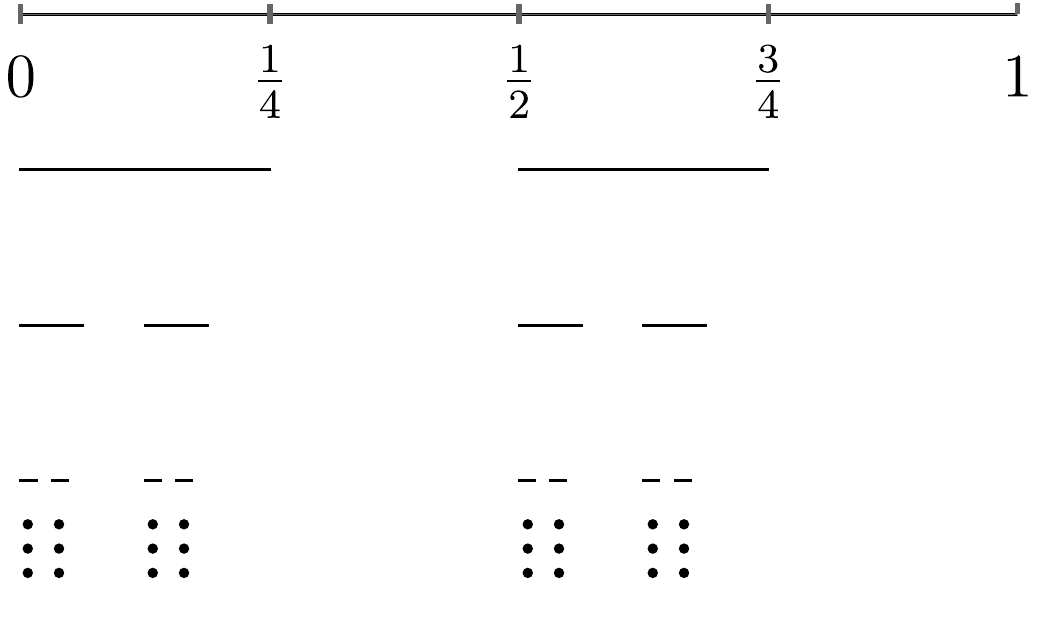}

\caption{\label{fig:F1} The $4$-Cantor set with double gaps as an iterated
function system. This is an iterated-function system construction:
Cantor-set and measure; see (\ref{eq:F4}) below.}

\end{figure}

It is known that $\mu_{4}$ is the unique probability measure s.t.
\begin{equation}
\frac{1}{2}\int_{0}^{1}\left(f\left(\frac{x}{4}\right)+f\left(\frac{x+2}{4}\right)\right)d\mu_{4}\left(x\right)=\int_{0}^{1}f\,d\mu_{4}.\label{eq:F4}
\end{equation}
For the Fourier transform $\widehat{\mu}_{4}$ we have 
\begin{equation}
\widehat{\mu}_{4}\left(t\right)=\prod_{k=1}^{\infty}\frac{1}{2}\left(1+e^{i\pi t/4^{k}}\right),\;t\in\mathbb{R}.
\end{equation}

In \tabref{F1}, we summarize the three examples with the data from
\thmref{E1}. We now turn to the details of the respective examples:

\renewcommand{\arraystretch}{1.5}

\begin{table}
\begin{tabular}{|c|c|c|>{\centering}p{0.35\columnwidth}|c|}
\hline 
 & $X$ & $K$ & $k_{x},M=\left[0,1\right]\cong\mathbb{T}^{1}$, $\mathscr{F}\left(K\right)=\left\{ \left(k_{x},\mu\right)\right\} $ & \multicolumn{1}{c|}{$\mu$}\tabularnewline
\hline 
Ex 1 & $\left[0,1\right]$ & $x\wedge y$ & $k_{x}\left(s\right)=\chi_{\left[0,x\right]}\left(s\right)$ & $\lambda_{1}$\tabularnewline
\hline 
Ex 2 & $\mathbb{D}$ & ${\displaystyle \frac{1}{1-z\overline{w}}}$ & ${\displaystyle k_{z}\left(t\right)=\frac{1}{1-z\overline{e\left(t\right)}}}$ & $\lambda_{1}$ on $\mathbb{T}^{1}$\tabularnewline
\hline 
Ex 3 & $\mathbb{D}$ & ${\displaystyle \prod_{n=0}^{\infty}\left(1+z^{4^{n}}\overline{w}^{4^{n}}\right)}$ & ${\displaystyle k_{z}\left(t\right)=\prod_{n=0}^{\infty}\left(1+z^{4^{n}}\overline{e\left(4^{n}t\right)}\right)}$ & $\mu_{4}$\tabularnewline
\hline 
\multicolumn{1}{c}{} & \multicolumn{1}{c}{} & \multicolumn{1}{c}{} & \multicolumn{1}{>{\centering}p{0.35\columnwidth}}{} & \multicolumn{1}{c}{}\tabularnewline
\end{tabular}

\caption{\label{tab:F1} Three p.d. kernels and their respective Gaussian realizations.}

\end{table}

\renewcommand{\arraystretch}{1}
\begin{example}
\label{exa:F1}If $K\left(x,y\right):=x\wedge y$ is considered a
kernel on $\left[0,1\right]\times\left[0,1\right]$, then the corresponding
RKHS $\mathscr{H}\left(K\right)$ is the Hilbert space of functions
$f$ on $\left[0,1\right]$ such that the distribution derivative
$f'=df/dx$ is in $L^{2}\left(\left[0,1\right],\lambda_{1}\right)$,
$\lambda_{1}=dx$, $f\left(0\right)=0$, and 
\begin{equation}
\left\Vert f\right\Vert _{\mathscr{H}\left(K\right)}^{2}:=\int_{0}^{1}\left|f'\left(x\right)\right|^{2}dx;\label{eq:F6}
\end{equation}
and it is immediate that $\left(k_{x},\lambda_{1}\right)\in\mathscr{F}\left(K\right)$
where $k_{x}\left(s\right):=\chi_{\left[0,x\right]}\left(s\right)$,
the indicator function; see \figref{F2}.

\begin{figure}[H]
\begin{tabular}{c}
\includegraphics[width=0.35\textwidth]{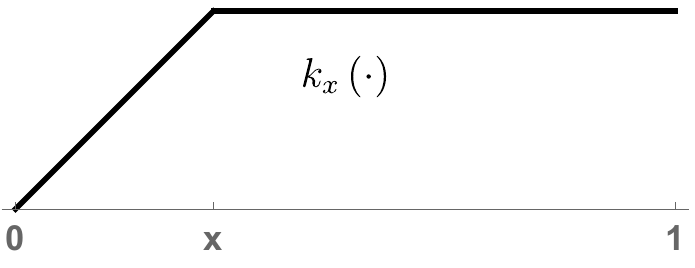}\tabularnewline
\includegraphics[width=0.35\textwidth]{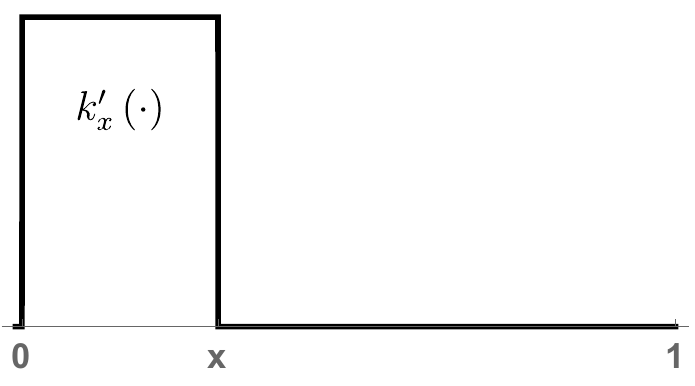}\tabularnewline
\tabularnewline
\end{tabular}

\caption{\label{fig:F2} The generators of the Cameron-Martin RKHS. See \exaref{F1}.}

\end{figure}

\begin{figure}[H]
\includegraphics[width=0.65\textwidth]{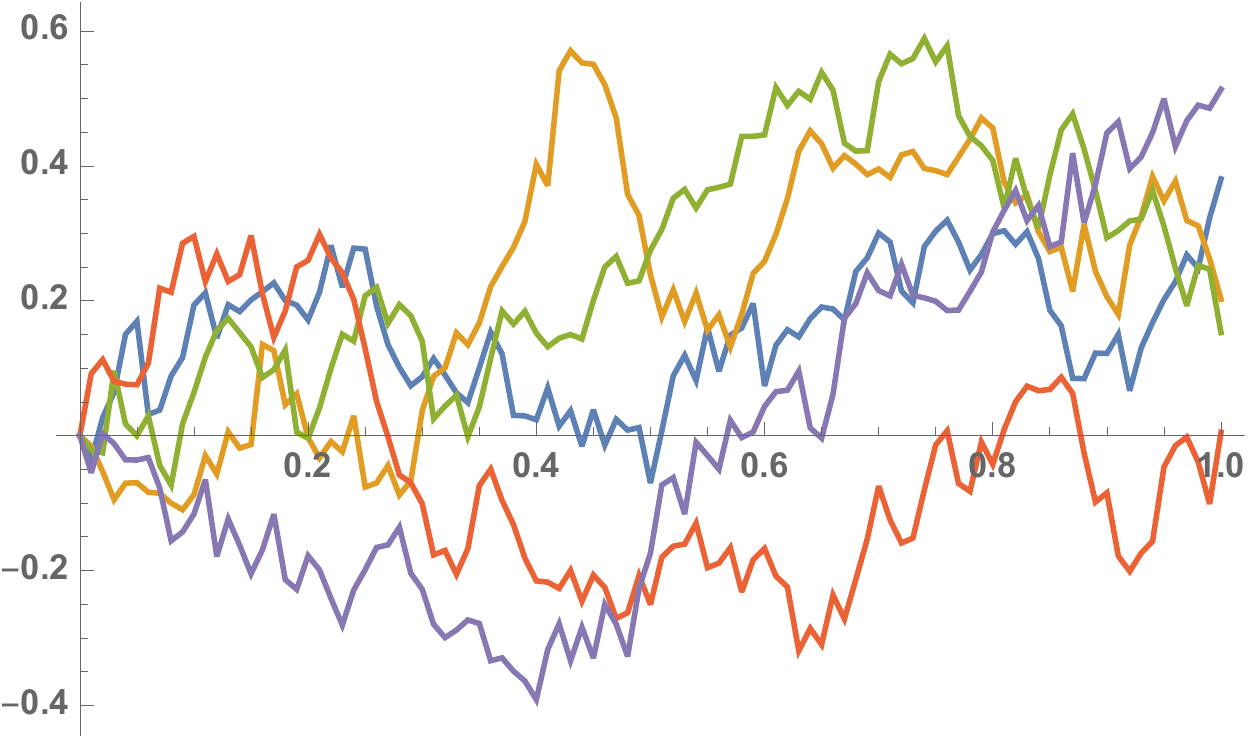}

\caption{\label{fig:F3}Brownian motion on $\left[0,1\right]$. Sample-paths
by Monte Carlo. See \exaref{F1}.}
\end{figure}

\begin{figure}[H]
\includegraphics[width=0.65\textwidth]{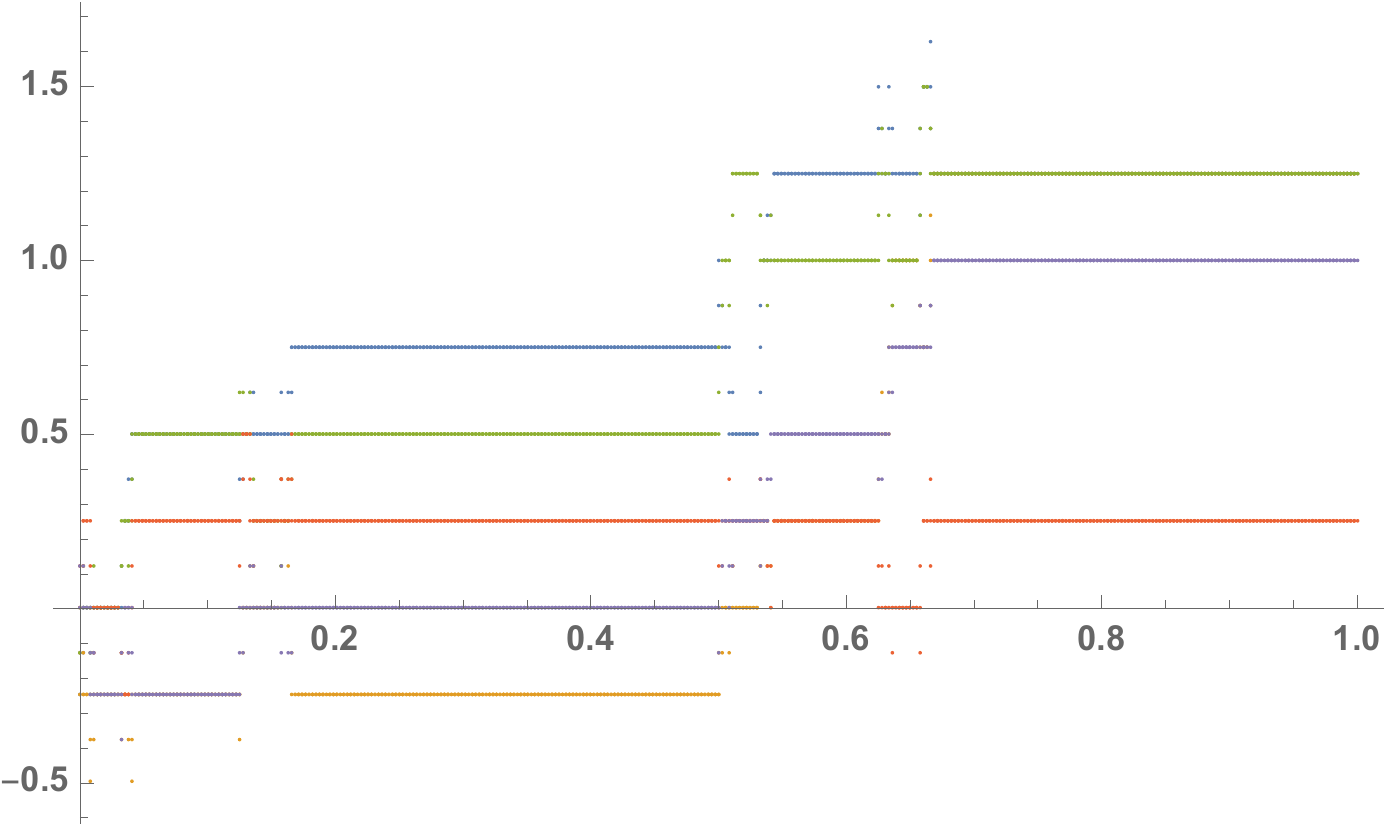}

\caption{\label{fig:F4}A Wiener process with holding patterns in the gaps
of the Cantor set $C_{4}$ in \figref{F1}: The $W^{\left(\mu_{4}\right)}$
process on $\left[0,1\right]$. Sample-paths by Monte Carlo.}
\end{figure}

The process $W^{\left(\lambda_{1}\right)}$ is of course the standard
Brownian motion on $\left[0,1\right]$, pinned at $x=0$; see \figref{F3},
and compare with the $W^{\left(\mu_{4}\right)}$-process in \figref{F4}.
For Monte Carlo simulation, see e.g. \cite{KDB,MR3884672}.

The Hilbert space characterized by (\ref{eq:F6}) is called the Cameron-Martin
space, see e.g., \cite{MR562914}. Moreover, to see that (\ref{eq:F6})
is indeed the precise characterization of the RKHS for this kernel,
one again applies \lemref{B1}.
\end{example}

It immediately follows from \thmref{E1} then the Gaussian processes
corresponding to the data in \tabref{F1} are as follows:
\begin{example}
$z\in\mathbb{D}$: 
\begin{equation}
V_{z}=\int_{0}^{1}\frac{1}{1-z\overline{e\left(t\right)}}dW_{t}^{\left(\lambda_{1}\right)}
\end{equation}
realized as an Ito-integral.

As an application of \thmref{E1}, we get: 
\[
\mathbb{E}\left(\overline{V}_{z}V_{w}\right)=\frac{1}{1-z\overline{w}},\;\forall\left(z,w\right)\in\mathbb{D}\times\mathbb{D}.
\]
\end{example}

\begin{example}
\label{exa:F3}$z\in\mathbb{D}$: 
\begin{equation}
V_{z}=\int_{0}^{1}\prod_{n=0}^{\infty}\left(1+z^{4^{n}}\overline{e\left(4^{n}t\right)}\right)dW_{t}^{\left(\mu_{4}\right)}
\end{equation}
were the $W^{\left(\mu_{4}\right)}$-Ito integral is supported on
the Cantor set $C_{4}\subset\left[0,1\right]$, see \figref{F1}.

As an application of \thmref{E1}, we get: 
\[
\mathbb{E}\left(\overline{V}_{z}V_{w}\right)=\prod_{n=0}^{\infty}\left(1+\left(z\overline{w}\right)^{4^{n}}\right).
\]
\end{example}

The reasoning of \exaref{F3} is based on a theorem of the paper \cite{MR1655831}
(see also \cite{jorgensen2018harmonic}). Set 
\begin{align}
\Lambda_{4} & =\left\{ 0,1,4,5,16,17,20,21,64,65,\cdots\right\} \nonumber \\
 & =\left\{ \sum\nolimits _{k=0}^{\text{finite}}\alpha_{k}4^{k}\mid\alpha_{k}\in\left\{ 0,1\right\} ,\;\text{fintie summations}\right\} \label{eq:F9}
\end{align}
then the Fourier functions $\left\{ e\left(\lambda t\right)\mathrel{;}\lambda\in\Lambda_{4}\right\} $
forms an orthonormal basis in $L^{2}\left(C_{4},\mu_{4}\right)$,
i.e., every $f\in L^{2}\left(C_{4},\mu_{4}\right)$ has its Fourier
expansion 
\begin{align*}
\widehat{f}\left(\lambda\right) & =\int_{C_{4}}\overline{e\left(\lambda t\right)}f\left(t\right)d\mu_{4}\left(t\right);\\
f\left(t\right) & =\sum_{\lambda\in\Lambda_{4}}\widehat{f}\left(\lambda\right)e\left(\lambda t\right);
\end{align*}
and 
\[
\int_{C_{4}}\left|f\right|^{2}d\mu_{4}=\sum_{\lambda\in\Lambda_{4}}\left|\widehat{f}\left(\lambda\right)\right|^{2}.
\]

\begin{lem}
\label{lem:F4}Consider the set $\Lambda_{4}$ in (\ref{eq:F9}),
and, for $s\in\mathbb{D}$, let 
\begin{equation}
F\left(s\right):=\sum_{\lambda\in\Lambda_{4}}s^{\lambda}
\end{equation}
be the corresponding generating function. Then we have the following
infinite-product representation
\begin{equation}
F\left(s\right)=\prod_{n=0}^{\infty}\left(1+s^{4^{n}}\right).\label{eq:F11}
\end{equation}
\end{lem}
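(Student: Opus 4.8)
The plan is to prove (\ref{eq:F11}) by recognizing $\Lambda_{4}$ as the set of those non-negative integers whose base-$4$ expansions use only the digits $0$ and $1$, verifying the identity first for finite partial products, and then passing to the limit. For $s\in\mathbb{D}$ both sides of (\ref{eq:F11}) are \emph{a priori} well defined: since $\Lambda_{4}$ is a subset of the non-negative integers, the series $F\left(s\right)=\sum_{\lambda\in\Lambda_{4}}s^{\lambda}$ is dominated by $\sum_{m\geq0}\left|s\right|^{m}$ and so converges absolutely for $\left|s\right|<1$; and the product $\prod_{n\geq0}\left(1+s^{4^{n}}\right)$ converges absolutely because $\sum_{n\geq0}\left|s\right|^{4^{n}}<\infty$.

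First I would expand the $N$-th partial product. Selecting from each binomial factor $\left(1+s^{4^{n}}\right)$, $0\leq n\leq N$, either the term $1$ or the term $s^{4^{n}}$, and writing $\alpha_{n}\in\left\{ 0,1\right\} $ for this choice, gives
\begin{equation}
\prod_{n=0}^{N}\left(1+s^{4^{n}}\right)=\sum_{\left(\alpha_{0},\dots,\alpha_{N}\right)\in\left\{ 0,1\right\} ^{N+1}}s^{\sum_{n=0}^{N}\alpha_{n}4^{n}}.\label{eq:plan-exp}
\end{equation}

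The combinatorial heart of the argument is then the claim that the map $\left(\alpha_{0},\dots,\alpha_{N}\right)\longmapsto\sum_{n=0}^{N}\alpha_{n}4^{n}$ is a bijection from $\left\{ 0,1\right\} ^{N+1}$ onto the finite set $\Lambda_{4}^{\left(N\right)}:=\left\{ \lambda\in\Lambda_{4}\mathrel{;}\lambda<4^{N+1}\right\} $. Injectivity is precisely the uniqueness of base-$4$ positional representation: two distinct digit strings with digits in $\left\{ 0,1\right\} $ produce distinct integers, as one sees by comparing them at the highest index where they differ. Surjectivity onto $\Lambda_{4}^{\left(N\right)}$ holds because, by the very definition (\ref{eq:F9}) of $\Lambda_{4}$, any $\lambda\in\Lambda_{4}$ with $\lambda<4^{N+1}$ has all its base-$4$ digits beyond position $N$ equal to $0$, hence arises from a unique string in $\left\{ 0,1\right\} ^{N+1}$. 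Granting the claim, (\ref{eq:plan-exp}) collapses to $\prod_{n=0}^{N}\left(1+s^{4^{n}}\right)=\sum_{\lambda\in\Lambda_{4}^{\left(N\right)}}s^{\lambda}$.

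Finally I would let $N\to\infty$. The sets $\Lambda_{4}^{\left(N\right)}$ increase to $\Lambda_{4}$ (every $\lambda\in\Lambda_{4}$ is finite, hence lies in $\Lambda_{4}^{\left(N\right)}$ for all large $N$), so by the absolute convergence of $F$ the right-hand side converges to $\sum_{\lambda\in\Lambda_{4}}s^{\lambda}=F\left(s\right)$, while the left-hand side converges to the infinite product; this yields (\ref{eq:F11}). I do not anticipate any genuine obstacle: the only substantive point is the bijection claim, which is the elementary uniqueness of base-$4$ representation restricted to the digit set $\left\{ 0,1\right\} $, and the convergence bookkeeping is routine for $\left|s\right|<1$.
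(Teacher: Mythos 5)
Your proof is correct, but it takes a different route from the paper's. The paper exploits the self-similarity of the set, $\Lambda_{4}=\left\{ 0,1\right\} +4\Lambda_{4}$ (its (\ref{eq:F12})), to derive the functional equation $F\left(s\right)=\left(1+s\right)F\left(s^{4}\right)$, and then iterates: $F\left(s\right)=\left(1+s\right)\left(1+s^{4}\right)\cdots\left(1+s^{4^{n-1}}\right)F\left(s^{4^{n}}\right)$, concluding by induction and absolute convergence of the infinite product on $\mathbb{D}$. You instead expand the finite partial product $\prod_{n=0}^{N}\left(1+s^{4^{n}}\right)$ directly and match its $2^{N+1}$ terms bijectively with $\left\{ \lambda\in\Lambda_{4}\mathrel{;}\lambda<4^{N+1}\right\} $ via uniqueness of base-$4$ representation with digits in $\left\{ 0,1\right\} $, then let $N\to\infty$. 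The two arguments rest on the same underlying fact: the paper's step $\sum_{\left\{ 0,1\right\} +4\Lambda_{4}}s^{\lambda}=\sum_{4\Lambda_{4}}s^{\lambda}+s\sum_{4\Lambda_{4}}s^{\lambda}$ tacitly uses that the sumset decomposition is collision-free, i.e.\ that each $\lambda\in\Lambda_{4}$ is \emph{uniquely} of the form $\alpha_{0}+4\lambda'$ with $\alpha_{0}\in\left\{ 0,1\right\} $, $\lambda'\in\Lambda_{4}$ --- which is exactly the injectivity you prove explicitly. What the paper's version buys is a compact recursive formulation that displays the iterated-function-system structure of $\Lambda_{4}$ (mirroring the IFS construction of $C_{4}$ and $\mu_{4}$ in (\ref{eq:F4})); what yours buys is a fully elementary, self-contained argument in which the one implicit uniqueness point is made visible. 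Both are complete; your convergence bookkeeping for $\left|s\right|<1$ is fine.
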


\begin{proof}
From (\ref{eq:F9}) we have the following self-similarity for $\Lambda_{4}$:
It is the following identity of sets 
\begin{equation}
\Lambda_{4}=\left\{ 0,1\right\} +4\Lambda_{4}.\label{eq:F12}
\end{equation}
Note that (\ref{eq:F12}) is an algorithm for generating points in
$\Lambda_{4}$. Hence,
\begin{align*}
F\left(s\right) & =\sum_{\lambda\in\Lambda_{4}}s^{\lambda}=\sum_{\left\{ 0,1\right\} +4\Lambda_{4}}s^{\lambda}\\
 & =\sum_{4\Lambda_{4}}s^{\lambda}+s\sum_{4\Lambda_{4}}s^{\lambda}\\
 & =\left(1+s\right)F\left(s^{4}\right)\\
 & =\big(1+s\big)\big(1+s^{4}\big)\cdots\big(1+s^{4^{n-1}}\big)F\big(s^{4^{n}}\big)
\end{align*}
and by induction.

Hence, if $s\in\mathbb{D}$, the infinite-product is absolutely convergent,
and the desired product formula (\ref{eq:F11}) follows.
\end{proof}
\begin{rem}
Note that, in combination with the theorem from \cite{MR1655831}
(see also \cite{jorgensen2018harmonic}), this property of the generating
function $F=F_{\Lambda_{4}}$ from \lemref{F4} is used in the derivation
of the assertions made about the factorization properties in \exaref{F3};
this includes the two formulas (Ex 3) as stated in \tabref{F1}; as
well as of the verification that $\left(k_{z},\mu_{4}\right)\in\mathscr{F}\left(K\right)$,
where $k_{z}$, $\mu_{4}$, and $K$ are as stated.

A direct computation of the two cases, \exaref{F1} and \exaref{F3},
is of interest. Our result, \lemref{D3}, is useful in the construction:
When computing the two Wiener processes $W^{\left(\lambda_{1}\right)}$
and $W^{\left(\mu\right)}$ one notes that the covariance computed
on intervals $\left[0,x\right]$ as $0<x<1$ are as follows:
\begin{align}
\mathbb{E}\left(\left(W_{\left[0,x\right]}^{\left(\lambda_{1}\right)}\right)^{2}\right) & =\lambda_{1}\left(\left[0,x\right]\right)=x,\;\text{and}\label{eq:F13}\\
\mathbb{E}\left(\left(W_{\left[0,x\right]}^{\left(\mu_{4}\right)}\right)^{2}\right) & =\mu_{4}\left(\left[0,x\right]\right).\label{eq:F14}
\end{align}
So the two functions have the representations as in \figref{F5}.

\begin{figure}[H]
\begin{tabular}{>{\centering}p{0.45\columnwidth}>{\centering}p{0.45\columnwidth}}
\includegraphics[width=0.3\textwidth]{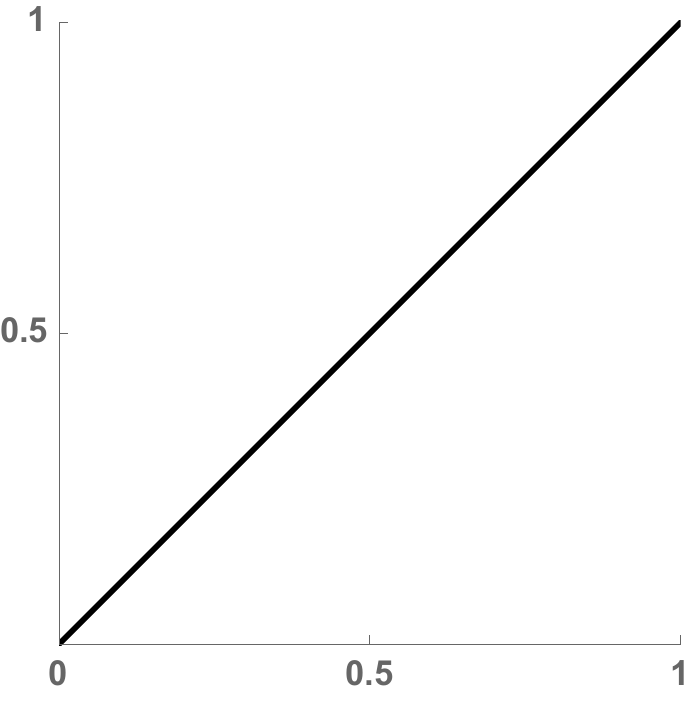} & \includegraphics[width=0.3\textwidth]{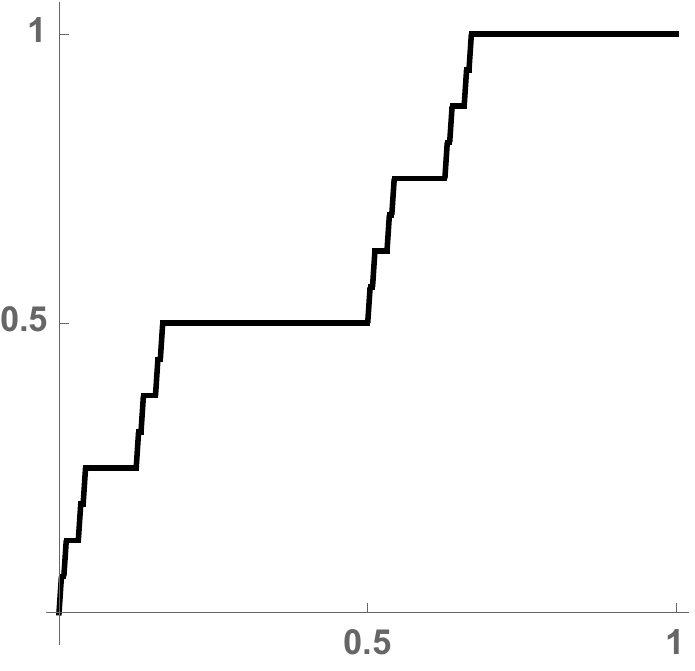}\tabularnewline
The variance formula in (\ref{eq:F13}). & The Devil's staircase. The variance formula in (\ref{eq:F14}).\tabularnewline
\end{tabular}

\caption{\label{fig:F5}The two cumulative distributions.}

\end{figure}
\end{rem}

\begin{example}
The following example illustrates the need for a distinction between
$X$, and families of choices $M$ in \thmref{E1}. \emph{A priori},
one might expect that if $X\times X\xrightarrow{\;K\;}\mathbb{C}$
is given and p.d., it would be natural to try to equip $X$ with a
$\sigma$-algebra $\mathscr{F}_{X}$ of subsets, and a measure $\mu$
such that the condition in (\ref{eq:D2}) holds for $\left(X,\mathscr{F}_{X},\mu\right)$,
i.e., 
\begin{equation}
K\left(x,y\right)=\int_{X}\overline{k}_{x}k_{y}d\mu,\;\left(x,y\right)\in X\times X
\end{equation}
with $\left\{ k_{x}\right\} _{x\in X}$ a system in $L^{2}\left(X,\mathscr{F}_{X},\mu\right)$.
It turns out that there are interesting examples where this is known
to \emph{not }be feasible. The best known such example is perhaps
the Drury-Arveson kernel; see \cite{MR1668582} and \cite{MR2419381,MR2648865}.

Specifics. Consider $\mathbb{C}^{k}$ for $k\geq2$, and $B_{k}\subset\mathbb{C}^{k}$
the complex ball defined for $z=\left(z_{1},\cdots,z_{k}\right)\in\mathbb{C}^{k}$,
\begin{equation}
B_{k}:=\Big\{ z\in\mathbb{C}^{k}\mathrel{;}\underset{\left\Vert z\right\Vert _{2}^{2}}{\underbrace{\sum\nolimits _{j=1}^{k}\left|z_{j}\right|^{2}}}<1\Big\}.
\end{equation}
For $z,w\in\mathbb{C}^{k}$, set 
\begin{align}
\left\langle z,w\right\rangle  & :=\sum_{j=1}^{k}z_{j}\overline{w}_{j},\;\text{and}\nonumber \\
K_{DA}\left(z,w\right) & :=\frac{1}{1-\left\langle z,w\right\rangle },\;\left(z,w\right)\in B_{k}\times B_{k}.\label{eq:F17}
\end{align}
\end{example}

\begin{cor}[{Arveson \cite[Coroll 2]{MR1668582}}]
\label{cor:F7}Let $k\geq2$, and let $\mathscr{H}\left(K_{DA}\right)$
be the RKHS of the D-A kernel in (\ref{eq:F17}). Then there is \uline{no}
Borel measure on $\mathbb{C}^{k}$ such that $\left(\mathbb{C}^{k},\mathscr{B}_{k},\mu\right)\in\mathscr{F}\left(K_{DA}\right)$;
i.e., there is \uline{no} solution to the formula 
\[
\left\Vert f\right\Vert _{\mathscr{H}\left(K_{DA}\right)}^{2}=\int_{\mathbb{C}^{k}}\left|f\left(z\right)\right|^{2}d\mu\left(z\right),
\]
for all $f\left(z\right)$ $k$-polynomials.
\end{cor}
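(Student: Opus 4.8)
The plan is to read off the Hilbert-space norms of monomials directly from the power-series expansion of $K_{DA}$, and then to show that no positive measure on $\mathbb{C}^{k}$ can reproduce them once $k\geq2$. First I would expand the geometric series and apply the multinomial theorem,
\[
K_{DA}\left(z,w\right)=\frac{1}{1-\left\langle z,w\right\rangle }=\sum_{n=0}^{\infty}\left\langle z,w\right\rangle ^{n}=\sum_{\alpha}\frac{\left|\alpha\right|!}{\alpha!}\,z^{\alpha}\overline{w}^{\alpha},
\]
the sum being over all multi-indices $\alpha=\left(\alpha_{1},\cdots,\alpha_{k}\right)$, with $z^{\alpha}=z_{1}^{\alpha_{1}}\cdots z_{k}^{\alpha_{k}}$, $\alpha!=\alpha_{1}!\cdots\alpha_{k}!$ and $\left|\alpha\right|=\alpha_{1}+\cdots+\alpha_{k}$. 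A kernel of the form $\sum_{\alpha}c_{\alpha}z^{\alpha}\overline{w}^{\alpha}$ with $c_{\alpha}>0$ has the monomials $\left\{ z^{\alpha}\right\} $ as a mutually orthogonal basis of its RKHS with $\left\Vert z^{\alpha}\right\Vert _{\mathscr{H}\left(K_{DA}\right)}^{2}=c_{\alpha}^{-1}$; here this gives $\left\Vert z^{\alpha}\right\Vert _{\mathscr{H}\left(K_{DA}\right)}^{2}=\alpha!/\left|\alpha\right|!$. In particular $\left\Vert z_{1}^{n}\right\Vert ^{2}=1$ for every $n$, while $\left\Vert z_{1}z_{2}\right\Vert ^{2}=1/2$.

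Now I would argue by contradiction: suppose a Borel measure $\mu$ on $\mathbb{C}^{k}$ satisfies $\left(\mathbb{C}^{k},\mathscr{B}_{k},\mu\right)\in\mathscr{F}\left(K_{DA}\right)$, so that $\left\Vert f\right\Vert _{\mathscr{H}\left(K_{DA}\right)}^{2}=\int_{\mathbb{C}^{k}}\left|f\right|^{2}d\mu$ for all polynomials $f$. Testing against $f=z_{1}^{n}$ yields $\int\left|z_{1}\right|^{2n}d\mu=1$ for all $n\geq0$; in particular the total mass is $\mu\left(\mathbb{C}^{k}\right)=1$. The crux is then the variance computation
\[
\int_{\mathbb{C}^{k}}\left(\left|z_{1}\right|^{2}-1\right)^{2}d\mu=\int\left|z_{1}\right|^{4}d\mu-2\int\left|z_{1}\right|^{2}d\mu+\int d\mu=1-2+1=0,
\]
which forces $\left|z_{1}\right|=1$ $\mu$-almost everywhere. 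By symmetry in the coordinates the same holds for each $j$, so $\mu$ is concentrated on the distinguished boundary $\mathbb{T}^{k}=\left\{ z:\left|z_{j}\right|=1,\;\forall j\right\} $.

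Finally, for $k\geq2$ I would test the isometry against the mixed monomial $f=z_{1}z_{2}$: since $\left|z_{1}z_{2}\right|^{2}=\left|z_{1}\right|^{2}\left|z_{2}\right|^{2}=1$ $\mu$-a.e., one gets $\int\left|z_{1}z_{2}\right|^{2}d\mu=\mu\left(\mathbb{C}^{k}\right)=1$, whereas the required value is $\left\Vert z_{1}z_{2}\right\Vert _{\mathscr{H}\left(K_{DA}\right)}^{2}=1/2$. This contradiction shows that no such $\mu$ exists. I expect the only delicate points to be the justification of the orthogonal monomial expansion (all the integrals in play are the finite numbers $\alpha!/\left|\alpha\right|!$, so no convergence subtlety arises) and the observation that the hypothesis $k\geq2$ is genuinely needed: when $k=1$ the same norms $\left\Vert z^{n}\right\Vert ^{2}=1$ are exactly realized by normalized arc-length measure on $\mathbb{T}^{1}$ (the Hardy-space case), so the obstruction is intrinsically multivariable, arising from the mixed monomial $z_{1}z_{2}$, which is available only when $k\geq2$.
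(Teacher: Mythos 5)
Your argument is correct, and it is worth noting that the paper itself offers no proof of this corollary at all: it is stated as a citation of Arveson's result, so your write-up supplies a self-contained argument where the paper has none. Your route is also more elementary than Arveson's original one (which proceeds by an asymptotic computation of the norms $\left\Vert \left\langle \cdot,\zeta\right\rangle ^{n}\right\Vert _{\mathscr{H}\left(K_{DA}\right)}^{2}=\binom{n+k-1}{n}^{-1}$ and a comparison with what any representing measure would force): you only need the three moments $\int\left|z_{1}\right|^{2n}d\mu=1$ for $n=0,1,2$, the zero-variance trick to localize $\mu$ on $\left\{ \left|z_{j}\right|=1\;\forall j\right\} $, and the single mixed monomial $z_{1}z_{2}$ with $\left\Vert z_{1}z_{2}\right\Vert ^{2}=\nicefrac{1}{2}$ to reach a contradiction. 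The one ingredient you should make explicit if this is to stand alone is the standard fact you flag yourself: for a diagonal kernel $\sum_{\alpha}c_{\alpha}z^{\alpha}\overline{w}^{\alpha}$ with $c_{\alpha}>0$, the functions $\sqrt{c_{\alpha}}\,z^{\alpha}$ form an orthonormal basis of the RKHS, so $\left\Vert z^{\alpha}\right\Vert ^{2}=c_{\alpha}^{-1}=\alpha!/\left|\alpha\right|!$; this follows, for instance, from the paper's \lemref{G2} together with the linear independence of the monomials. Your closing remark that the obstruction is genuinely multivariable (normalized arc-length on $\mathbb{T}^{1}$ realizes the case $k=1$) is a good sanity check and consistent with the hypothesis $k\geq2$ in the statement.
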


\begin{rem}
It is natural to ask about disintegration properties for the Gaussian
process $V_{DA}$ corresponding to the Drury-Arveson kernel (\ref{eq:F17}).
Combining our \thmref{E1} above with the corollary (Coroll \ref{cor:F7}),
we conclude that, in two or more complex dimensions $k$, the question
of finding the admissible disintegrations this Gaussian process $V_{DA}$
is subtle. It must necessarily involve measure spaces going beyond
$\mathbb{C}^{k}$.
\end{rem}

\section{\label{sec:atomic}The case of $\left(k_{x},\mu\right)\in\mathscr{F}\left(K\right)$
when $\mu$ is atomic}

Below we present a case where $\mu$ from pairs in $\mathscr{F}\left(K\right)$
may be chosen to be atomic. The construction is general, but for the
sake of simplicity we shall assume that a given p.d. $K$ is such
that the RKHS $\mathscr{H}\left(K\right)$ is separable, i.e., when
it has an (all) orthonormal basis (ONB) indexed by $\mathbb{N}$.
\begin{defn}
\label{def:G1}Let $\mathscr{H}$ be a Hilbert space (separable),
and let $\left\{ g_{n}\right\} _{n\in\mathbb{N}}$ be a system of
vectors in $\mathscr{H}$ such that 
\begin{equation}
\sum_{n\in\mathbb{N}}\left|\left\langle \psi,g_{n}\right\rangle _{\mathscr{H}}\right|^{2}=\left\Vert \psi\right\Vert _{\mathscr{H}}^{2}\label{eq:F1}
\end{equation}
holds for all $\psi\in\mathscr{H}$. We then say that $\left\{ g_{n}\right\} _{n\in\mathbb{N}}$
is a Parseval frame for $\mathscr{H}$. (Also see \defref{J1}.)

An equivalent assumption is that the mapping 
\begin{equation}
\mathscr{H}\ni\psi\xmapsto{\quad T\quad}\left(\left\langle \psi,g_{n}\right\rangle _{\mathscr{H}}\right)\in l^{2}\left(\mathbb{N}\right)
\end{equation}
is isometric. One checks that then the adjoint $T^{*}:l^{2}\rightarrow\mathscr{H}$
is: 
\[
T^{*}\left(\left(\xi_{n}\right)\right)=\sum_{n\in\mathbb{N}}\xi_{n}g_{n}\in\mathscr{H}.
\]
\end{defn}

For general background references on frames in Hilbert space, we refer
to \cite{MR2367342,MR2538596,MR3118429,MR3005286,MR3009685,MR3204026,MR3121682,MR3275625,MR3574374},
and also see \cite{MR3005286,MR3526434,MR3688637,MR3700114,MR3800275}.
\begin{lem}
\label{lem:G2}Let $K$ be given p.d. on $X\times X$, and assume
that $\left\{ g_{n}\right\} _{n\in\mathbb{N}}$ is a Parseval frame
in $\mathscr{H}\left(K\right)$; then 
\begin{equation}
K\left(x,y\right)=\sum_{n\in\mathbb{N}}g_{n}\left(x\right)\overline{g_{n}\left(y\right)}\label{eq:G3}
\end{equation}
with the sum on the RHS in (\ref{eq:G3}) absolutely convergent.
\end{lem}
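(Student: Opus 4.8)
The plan is to read off the identity (\ref{eq:G3}) directly from the reproducing property of $\mathscr{H}\left(K\right)$ together with the (polarized) Parseval frame identity (\ref{eq:F1}). The one observation that makes everything work is that each frame vector, evaluated at a point, is an inner product against the kernel: by the reproducing property (\ref{eq:A4}), for every $n$ and every $x\in X$ we have $g_{n}\left(x\right)=\left\langle K\left(\cdot,x\right),g_{n}\right\rangle _{\mathscr{H}\left(K\right)}$. Consequently $\overline{g_{n}\left(y\right)}=\overline{\left\langle K\left(\cdot,y\right),g_{n}\right\rangle _{\mathscr{H}\left(K\right)}}=\left\langle g_{n},K\left(\cdot,y\right)\right\rangle _{\mathscr{H}\left(K\right)}$, so the general summand on the right of (\ref{eq:G3}) rewrites as $g_{n}\left(x\right)\overline{g_{n}\left(y\right)}=\left\langle K\left(\cdot,x\right),g_{n}\right\rangle _{\mathscr{H}\left(K\right)}\left\langle g_{n},K\left(\cdot,y\right)\right\rangle _{\mathscr{H}\left(K\right)}$.

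Next I would pass from the scalar Parseval identity (\ref{eq:F1}) to its sesquilinear (polarized) form. Since $\left\{ g_{n}\right\} $ is a Parseval frame, the analysis operator $T$ of \defref{G1} is an isometry, hence $T^{*}T=I_{\mathscr{H}\left(K\right)}$, which is exactly the statement that $\sum_{n}\left\langle \psi,g_{n}\right\rangle _{\mathscr{H}\left(K\right)}\left\langle g_{n},\phi\right\rangle _{\mathscr{H}\left(K\right)}=\left\langle \psi,\phi\right\rangle _{\mathscr{H}\left(K\right)}$ for all $\psi,\phi\in\mathscr{H}\left(K\right)$ (equivalently, obtained from (\ref{eq:F1}) by the complex polarization identity applied to $\psi\pm\phi$ and $\psi\pm i\phi$). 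Specializing to $\psi=K\left(\cdot,x\right)$ and $\phi=K\left(\cdot,y\right)$ and invoking the rewriting of the previous paragraph gives
\[
\sum_{n\in\mathbb{N}}g_{n}\left(x\right)\overline{g_{n}\left(y\right)}=\left\langle K\left(\cdot,x\right),K\left(\cdot,y\right)\right\rangle _{\mathscr{H}\left(K\right)}=K\left(x,y\right),
\]
where the last equality is the defining inner product (\ref{eq:A3}) of $\mathscr{H}\left(K\right)$. This is precisely (\ref{eq:G3}).

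For absolute convergence I would apply Cauchy--Schwarz in $l^{2}\left(\mathbb{N}\right)$ to the two sequences $\left(g_{n}\left(x\right)\right)_{n}$ and $\left(g_{n}\left(y\right)\right)_{n}$: $\sum_{n}\left|g_{n}\left(x\right)\right|\left|g_{n}\left(y\right)\right|\leq\big(\sum_{n}\left|g_{n}\left(x\right)\right|^{2}\big)^{1/2}\big(\sum_{n}\left|g_{n}\left(y\right)\right|^{2}\big)^{1/2}$, and each factor is finite because, by (\ref{eq:F1}) applied to $\psi=K\left(\cdot,x\right)$, one has $\sum_{n}\left|g_{n}\left(x\right)\right|^{2}=\sum_{n}\left|\left\langle K\left(\cdot,x\right),g_{n}\right\rangle _{\mathscr{H}\left(K\right)}\right|^{2}=\left\Vert K\left(\cdot,x\right)\right\Vert _{\mathscr{H}\left(K\right)}^{2}=K\left(x,x\right)<\infty$. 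Thus the bound $\sum_{n}\left|g_{n}\left(x\right)\overline{g_{n}\left(y\right)}\right|\leq K\left(x,x\right)^{1/2}K\left(y,y\right)^{1/2}$ secures absolute convergence, which also retroactively justifies the unconditional rearrangement used above.

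The argument is essentially formal once the reproducing property is invoked, so there is no serious obstacle; the only point requiring a word of care is the passage from the real-valued norm identity (\ref{eq:F1}) to the polarized bilinear identity, i.e., establishing $T^{*}T=I$ in the complex case. I would dispatch this either by citing the isometry characterization already recorded in \defref{G1} or by a one-line polarization, and I expect this to be the most technical (though still routine) step.
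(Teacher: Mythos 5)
Your proposal is correct and follows essentially the same route as the paper: the reproducing property identifies $g_{n}\left(x\right)$ with $\left\langle K\left(\cdot,x\right),g_{n}\right\rangle _{\mathscr{H}\left(K\right)}$, and the polarized Parseval identity applied to $\psi=K\left(\cdot,x\right)$, $\phi=K\left(\cdot,y\right)$ yields (\ref{eq:G3}). You merely make explicit two points the paper leaves implicit --- the polarization step justifying the sesquilinear form of (\ref{eq:F1}), and the Cauchy--Schwarz bound $\sum_{n}\left|g_{n}\left(x\right)\overline{g_{n}\left(y\right)}\right|\leq K\left(x,x\right)^{1/2}K\left(y,y\right)^{1/2}$ establishing the claimed absolute convergence --- both of which are welcome additions.
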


\begin{proof}
By the reproducing property of $\mathscr{H}\left(K\right)$, see \secref{pdk},
we get, for all $\left(x,y\right)\in X\times X$: 
\begin{eqnarray*}
K\left(x,y\right) & = & \left\langle K\left(\cdot,x\right),K\left(\cdot,y\right)\right\rangle _{\mathscr{H}\left(K\right)}\\
 & \underset{\text{by \ensuremath{\left(\ref{eq:F1}\right)}}}{=} & \sum_{n\in\mathbb{N}}\left\langle K\left(\cdot,x\right),g_{n}\right\rangle _{\mathscr{H}\left(K\right)}\left\langle g_{n},K\left(\cdot,y\right)\right\rangle _{\mathscr{H}\left(K\right)}\\
 & \underset{\text{by \ensuremath{\left(\ref{eq:A4}\right)}}}{=} & \sum_{n\in\mathbb{N}}g_{n}\left(x\right)\overline{g_{n}\left(y\right)}.
\end{eqnarray*}
\end{proof}
Now a direct application of the argument in the proof of \thmref{E1}
yields the following:
\begin{cor}
Let $K$ be given p.d. on $X\times X$ such that $\mathscr{H}\left(K\right)$
is separable, and let $\left\{ g_{n}\right\} _{n\in\mathbb{N}}$ be
a Parseval frame, for example an ONB in $\mathscr{H}\left(K\right)$.
Let $\left\{ \zeta_{n}\right\} _{n\in\mathbb{N}}$ be a chosen system
of i.i.d. (independent identically distributed) system of standard
Gaussians, i.e., with $N\left(0,1\right)$-distribution $\nicefrac{1}{\sqrt{2\pi}}e^{\nicefrac{-s^{2}}{2}}$,
$s\in\mathbb{R}$. Then the following sum defines a Gaussian process,
\begin{equation}
V_{x}\left(\cdot\right):=\sum_{n\in\mathbb{N}}g_{n}\left(x\right)\zeta_{n}\left(\cdot\right),
\end{equation}
i.e., $\left\{ V_{x}\right\} _{x\in X}$ is well-defined in $L^{2}\left(\Omega,Cyl,\mathbb{P}\right)$,
as stated, where $\Omega=\mathbb{R}^{\mathbb{N}}$ as a realization
in an infinite Cartesian product with the usual cylinder $\sigma$-algebra,
and $\left\{ V_{x}\right\} _{x\in X}$ has $K$ as covariance kernel,
i.e., 
\[
\mathbb{E}\left(\overline{V}_{x}V_{y}\right)=K\left(x,y\right),\;\forall\left(x,y\right)\in X\times X;
\]
see (\ref{eq:D11}).
\end{cor}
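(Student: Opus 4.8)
The plan is to mirror Part~1 of the proof of \thmref{E1}, replacing the Wiener process $W^{\left(\mu\right)}$ by the i.i.d. sequence $\left\{\zeta_n\right\}$ and the Ito-integral by an ordinary $\ell^2$-series; the atomic (discrete) setting simply turns integration into summation. Concretely, realize $\Omega=\mathbb{R}^{\mathbb{N}}$ with $\mathbb{P}$ the infinite product of standard $N\left(0,1\right)$ measures (this product measure exists on the cylinder $\sigma$-algebra), and let $\zeta_n$ be the $n$-th coordinate functional. Then $\left\{\zeta_n\right\}_{n\in\mathbb{N}}$ is an orthonormal system of mean-zero Gaussians in $L^{2}\left(\Omega,\mathbb{P}\right)$, i.e. $\mathbb{E}\left(\zeta_m\zeta_n\right)=\delta_{m,n}$. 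The first input is the pointwise summability that comes for free from \lemref{G2}: taking $y=x$ there yields $\sum_{n}\left|g_n\left(x\right)\right|^{2}=K\left(x,x\right)<\infty$ for each fixed $x\in X$.

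Next I would define $V_x$ as an $L^{2}\left(\Omega,\mathbb{P}\right)$-limit of its partial sums $S_N^{\left(x\right)}:=\sum_{n=1}^{N}g_n\left(x\right)\zeta_n$. Using orthonormality of $\left\{\zeta_n\right\}$,
\[
\mathbb{E}\left|S_N^{\left(x\right)}-S_M^{\left(x\right)}\right|^{2}=\sum_{n=M+1}^{N}\left|g_n\left(x\right)\right|^{2}\xrightarrow[M,N\to\infty]{}0,
\]
so $\bigl(S_N^{\left(x\right)}\bigr)_N$ is Cauchy and $V_x=\lim_N S_N^{\left(x\right)}$ exists in $L^{2}\left(\Omega,Cyl,\mathbb{P}\right)$, as claimed. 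For Gaussianity, each $S_N^{\left(x\right)}$ is a finite linear combination of the jointly Gaussian $\zeta_n$, hence Gaussian; more generally, for any finite tuple $x_1,\dots,x_k\in X$ the vector $\bigl(S_N^{\left(x_1\right)},\dots,S_N^{\left(x_k\right)}\bigr)$ is jointly Gaussian. Since the closed linear span of $\left\{\zeta_n\right\}$ in $L^{2}\left(\Omega,\mathbb{P}\right)$ (the first Wiener chaos) consists of Gaussian variables and is norm-closed, the $L^{2}$-limits $V_{x_1},\dots,V_{x_k}$ are again jointly Gaussian. Thus $\left\{V_x\right\}_{x\in X}$ is a mean-zero Gaussian process.

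Finally, the covariance is computed by $L^{2}$-continuity of the inner product together with $\mathbb{E}\left(\zeta_m\zeta_n\right)=\delta_{m,n}$:
\[
\mathbb{E}\left(\overline{V}_x V_y\right)=\lim_N\sum_{m,n\le N}\overline{g_m\left(x\right)}\,g_n\left(y\right)\,\mathbb{E}\left(\zeta_m\zeta_n\right)=\sum_{n\in\mathbb{N}}\overline{g_n\left(x\right)}\,g_n\left(y\right),
\]
and this single sum is exactly the Parseval-frame expansion of the kernel, so it equals $K\left(x,y\right)$ by \lemref{G2} (in the real-valued reduction adopted in \lemref{D3} the conjugation is immaterial, and in general one invokes Hermitian symmetry of $K$); this is precisely the identity quoted as (\ref{eq:D11}). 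The main obstacle---everything else being routine---is the passage to the limit in the definition of $V_x$: one must know not merely that the series converges in $L^{2}$, but that Gaussianity, and in fact joint Gaussianity of finite families, survives the limit. This is the discrete counterpart of the Ito-isometry step in \thmref{E1}, and it rests on the closedness of the Gaussian subspace spanned by $\left\{\zeta_n\right\}$ in $L^{2}\left(\Omega,\mathbb{P}\right)$.
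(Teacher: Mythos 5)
Your proposal is correct and takes the same route as the paper: the paper's entire proof of this corollary reads ``a direct application of \lemref{G2}, remaining verifications left to the reader,'' and your argument is precisely that application of the Parseval expansion $K(x,y)=\sum_{n}g_{n}(x)\overline{g_{n}(y)}$ together with the omitted verifications (the $L^{2}$-Cauchy property of the partial sums, closedness of the Gaussian linear span of $\{\zeta_{n}\}$ under $L^{2}$-limits, and the covariance computation). Your closing remark on the conjugation is apt --- your sum $\sum_{n}\overline{g_{n}(x)}g_{n}(y)$ equals $\overline{K(x,y)}=K(y,x)$ in the complex case, which matches the stated identity in the real-valued setting the paper works in --- so this is a presentational point rather than a gap.
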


\begin{proof}
This is a direct application of \lemref{G2}, and we leave the remaining
verifications to the reader.
\end{proof}

\section{Point processes: The case when $\left\{ \delta_{x}\right\} \subset\mathscr{H}\left(K\right)$}

Let $X\times X\xrightarrow{\;K\;}\mathbb{R}$ be a fixed positive
definite kernel. We know that the RKHS $\mathscr{H}\left(K\right)$
consists of functions $h$ on $X$ subject to the \emph{a priori}
estimate in \lemref{B1}. For recent work on point-processes over
infinite networks \cite{MR3860446,MR3246982,MR3843552,MR3670916,MR3390972,MR3861681,MR3848251,MR3881668,MR3854505},
the case when the Dirac measures $\delta_{x}$ are in $\mathscr{H}\left(K\right)$
is of special significance. In this case there is an abstract Laplace
operator $\Delta$, defined as follows:
\begin{equation}
\left(\Delta h\right)\left(x\right)=\left\langle \delta_{x},h\right\rangle _{\mathscr{H}\left(K\right)},\;\forall h\in\mathscr{H}\left(K\right).\label{eq:H1}
\end{equation}
For the $\left\Vert \cdot\right\Vert _{\mathscr{H}\left(K\right)}$-norm
of $\delta_{x}$, we have
\begin{equation}
\left(\Delta\delta_{x}\right)\left(x\right)=\left\Vert \delta_{x}\right\Vert _{\mathscr{H}\left(K\right)}^{2};\label{eq:H2}
\end{equation}
immediate from (\ref{eq:H1}).

For every finite subset $F\subset X$, we consider the induced $\left|F\right|\times\left|F\right|$
matrix 
\begin{equation}
K_{F}\left(x,y\right)=\left(K\left(x,y\right)\right)_{x,y\in F}.
\end{equation}
Note that $K_{F}$ is a positive definite square matrix. Its spectrum
consists of eigenvalues $\lambda_{s}\left(F\right)$.

If $\left(K,X\right)$ is as described, i.e., $X\times X\xrightarrow{\;K\;}\mathbb{R}\left(\text{or }\mathbb{C}\right)$
p.d., and if 
\begin{equation}
\left\{ \delta_{x}\right\} _{x\in X}\subset\mathscr{H}\left(K\right),\label{eq:H4-1}
\end{equation}
we shall see that $X$ must then be discrete. (In interesting cases,
also countable.) If (\ref{eq:H4-1}) holds, we shall say that $\left(K,X\right)$
is a \emph{point process}. We shall further show that point processes
arise by restriction as follows:

Let $\left(K,X\right)$ be given with $K$ a p.d. kernel. If a countable
subset $S\subset X$ is such that $K^{\left(S\right)}:=K\big|_{S\times S}$
has 
\begin{equation}
\left\{ \delta_{x}\right\} _{x\in S}\in\mathscr{H}(K^{\left(S\right)}),
\end{equation}
then we shall say that $\left(K^{\left(S\right)},S\right)$ is an
\emph{induced point process}.

\subsection{Nets of finite submatrices, and their limits}

Given $(K,X)$ as above with $K$ p.d. and defined on $X\times X$.
Then the finite submatrices in the subsection header are indexed by
the net of all finite subsets $F$ of $X$ as follows: Given $F$,
then the corresponding $\left|F\right|\times\left|F\right|$ square
matrix $K_{F}$ is simply the restriction of $K$ to $F\times F$.
Of course, each matrix $K_{F}$ is positive definite, and so it has
a finite list of eigenvalues. These eigenvalue lists figure in the
discussion below.
\begin{lem}
\label{lem:H1}Let $K$, $F$, and $K_{F}$ be as above, with $\lambda_{s}\left(F\right)$
denoting the numbers in the list of eigenvalues for the matrix $K_{F}$.
Then
\begin{equation}
1\leq\lambda_{s}\left(F\right)\sum_{x\in F}\left\Vert \delta_{x}\right\Vert _{\mathscr{H}\left(K\right)}^{2}.\label{eq:H4}
\end{equation}
\end{lem}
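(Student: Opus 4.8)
The plan is to produce, for each eigenvalue $\lambda_s(F)$ of the finite matrix $K_F$, one well-chosen test vector in $\mathscr{H}(K)$ and then pair the reproducing kernels $\{K(\cdot,x)\}$ against the Dirac system $\{\delta_x\}$ using Cauchy-Schwarz twice. First I would fix an eigenvalue $\lambda=\lambda_s(F)$ together with a unit eigenvector $v=(v_x)_{x\in F}$ in $\ell^2(F)$, so that $K_F v=\lambda v$ and $\sum_{x\in F}|v_x|^2=1$. From $v$ I build two elements of $\mathscr{H}(K)$: the kernel combination $\phi:=\sum_{x\in F}v_x K(\cdot,x)$, and the Dirac combination $\psi:=\sum_{x\in F}v_x\delta_x$. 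The second object lives in $\mathscr{H}(K)$ precisely because we are in the point-process regime $\{\delta_x\}_{x\in X}\subset\mathscr{H}(K)$, which is exactly the standing hypothesis of this subsection.

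Three short computations drive the argument. Using the inner product formula (\ref{eq:A3}) and the eigenvalue equation, $\|\phi\|^2_{\mathscr{H}(K)}=\sum_{x,y\in F}\overline{v_x}v_y K(x,y)=v^{*}K_F v=\lambda$. Next comes the key biorthogonality relation: applying the reproducing property (\ref{eq:A4}) to $h=\delta_x$ gives $\langle K(\cdot,y),\delta_x\rangle_{\mathscr{H}(K)}=\delta_x(y)$, and since $\delta_x$ is the indicator at $x$ this equals the Kronecker delta $[x=y]$. Feeding this into the pairing (up to a harmless conjugation in the complex case) collapses the double sum to $\langle\psi,\phi\rangle_{\mathscr{H}(K)}=\sum_{x,y\in F}\overline{v_x}v_y\,[x=y]=\sum_{x\in F}|v_x|^2=1$. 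Finally, a triangle-inequality-plus-Cauchy-Schwarz estimate controls the Dirac combination: $\|\psi\|_{\mathscr{H}(K)}\le\sum_{x\in F}|v_x|\,\|\delta_x\|_{\mathscr{H}(K)}\le\big(\sum_{x\in F}|v_x|^2\big)^{1/2}\big(\sum_{x\in F}\|\delta_x\|^2_{\mathscr{H}(K)}\big)^{1/2}$, so that $\|\psi\|^2_{\mathscr{H}(K)}\le\sum_{x\in F}\|\delta_x\|^2_{\mathscr{H}(K)}$, again using $\|v\|_2=1$.

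Assembling the pieces, Cauchy-Schwarz on the pairing yields $1=|\langle\psi,\phi\rangle_{\mathscr{H}(K)}|^2\le\|\psi\|^2_{\mathscr{H}(K)}\,\|\phi\|^2_{\mathscr{H}(K)}\le\big(\sum_{x\in F}\|\delta_x\|^2_{\mathscr{H}(K)}\big)\,\lambda_s(F)$, which is exactly (\ref{eq:H4}). As the eigenvalue was arbitrary, the estimate holds for every entry of the eigenvalue list of $K_F$.

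I expect the only genuine obstacle to be pinning down the biorthogonality identity $\langle K(\cdot,y),\delta_x\rangle_{\mathscr{H}(K)}=[x=y]$, that is, justifying that the element $\delta_x\in\mathscr{H}(K)$ really is represented by the indicator function, so the reproducing property returns a Kronecker delta; everything downstream is two applications of Cauchy-Schwarz. As a sanity check and an alternative route, I note that in the purely finite model $X=F$ one has $\|\delta_x\|^2_{\mathscr{H}(K_F)}=(K_F^{-1})_{xx}$, whence $\sum_{x\in F}\|\delta_x\|^2=\operatorname{Tr}(K_F^{-1})=\sum_s\lambda_s(F)^{-1}\ge\lambda_s(F)^{-1}$; multiplying by $\lambda_s(F)$ recovers (\ref{eq:H4}), and the monotonicity of RKHS norms under restriction, $\|\delta_x\|_{\mathscr{H}(K_F)}\le\|\delta_x\|_{\mathscr{H}(K)}$, shows the finite computation only strengthens the bound in the general case.
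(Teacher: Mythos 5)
Your argument is correct and is essentially the paper's proof in slightly repackaged form: the paper applies the a priori estimate of Lemma \ref{lem:B1} with $h=\delta_x$ to the unit eigenvector, obtaining $|\xi_x|^2\le\|\delta_x\|_{\mathscr{H}(K)}^{2}\,\lambda_s(F)$ pointwise and then summing over $x\in F$, whereas you bundle the Dirac masses into the single element $\psi=\sum_{x}v_x\delta_x$ and apply Cauchy--Schwarz twice to the pairing $\langle\psi,\phi\rangle=1$. Both routes rest on the same biorthogonality $\langle K(\cdot,y),\delta_x\rangle_{\mathscr{H}(K)}=\delta_x(y)$ coming from the reproducing property and the standing hypothesis $\{\delta_x\}\subset\mathscr{H}(K)$, and they yield the identical bound.
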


\begin{proof}
Consider the eigenvalue equation 
\begin{equation}
\left(\xi_{x}\right)_{x\in F},\quad\sum_{x\in F}\left|\xi_{x}\right|^{2}=\left\Vert \xi\right\Vert _{2}^{2}=1,\quad K_{F}\xi=\lambda_{s}\left(F\right)\xi.
\end{equation}
From \lemref{B1} and for $x\in F$, we then get
\begin{align}
\left|\xi_{x}\right|^{2} & \leq\left\Vert \delta_{x}\right\Vert _{\mathscr{H}\left(K\right)}^{2}\left\langle \xi,K_{F}\xi\right\rangle _{l^{2}\left(F\right)}\nonumber \\
 & =\left\Vert \delta_{x}\right\Vert _{\mathscr{H}\left(K\right)}^{2}\lambda_{s}\left(F\right).\label{eq:H6}
\end{align}
Now apply $\sum_{x\in F}$ to both sides in (\ref{eq:H6}), and the
desired conclusion (\ref{eq:H4}) follows.
\end{proof}
\begin{rem}
A consequence of the lemma is that the matrices $K_{F}^{-1}$ and
$K_{F}^{-1/2}$ automatically are well defined (by the spectral theorem)
with associated spectral bounds.
\end{rem}

\begin{defn}
Let $K$, $F$, and $K_{F}$ be as above; and with the condition $\delta_{x}\in\mathscr{H}\left(K\right)$
in force. Set 
\begin{equation}
\mathscr{H}_{K}\left(F\right):=span_{x\in F}\left\{ K\left(\cdot,x\right)\right\} .
\end{equation}
\end{defn}

It is a finite-dimensional (and therefore closed) subspace in $\mathscr{H}\left(K\right)$.
The orthogonal projection onto $\mathscr{H}_{K}\left(F\right)$ will
be denoted $P_{F}:\mathscr{H}\left(K\right)\rightarrow\mathscr{H}_{K}\left(F\right)$.
\begin{lem}
\label{lem:H4}Let $K$, $F$, $K_{F}$, and $\mathscr{H}_{K}\left(F\right)$
be as above. Then the orthogonal projection $P_{F}$ is as follows:
For $h\in\mathscr{H}\left(K\right)$, set $h_{F}=h\big|_{F}$, restriction:
\begin{equation}
\left(P_{F}h\right)\left(\cdot\right)=\sum_{y\in F}\left(K_{F}^{-1}h_{F}\right)\left(y\right)K\left(\cdot,y\right).\label{eq:H8}
\end{equation}
\end{lem}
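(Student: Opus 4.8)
The plan is to identify $P_F$ through its interpolation characterization in the RKHS, rather than by setting up and solving a normal-equation system. First I would record the Gram structure of the generators: by the reproducing property (\ref{eq:A4}) (equivalently (\ref{eq:A3})), one has $\left\langle K\left(\cdot,x\right),K\left(\cdot,y\right)\right\rangle _{\mathscr{H}\left(K\right)}=K\left(x,y\right)$, so the Gram matrix of the spanning set $\left\{ K\left(\cdot,y\right)\right\} _{y\in F}$ of $\mathscr{H}_{K}\left(F\right)$ is exactly $K_{F}$. By the remark following \lemref{H1}, $K_{F}$ is invertible, so $K_{F}^{-1}$ in (\ref{eq:H8}) is well defined and the generators form a genuine basis of the finite-dimensional space $\mathscr{H}_{K}\left(F\right)$.

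Next I would reformulate membership in the range of the projection. Since $\mathscr{H}_{K}\left(F\right)=span_{x\in F}\left\{ K\left(\cdot,x\right)\right\} $, an element $g\in\mathscr{H}_{K}\left(F\right)$ equals $P_{F}h$ if and only if $h-g\perp K\left(\cdot,x\right)$ for every $x\in F$. Applying (\ref{eq:A4}) gives $\left\langle K\left(\cdot,x\right),h-g\right\rangle _{\mathscr{H}\left(K\right)}=h\left(x\right)-g\left(x\right)$, so this orthogonality requirement is precisely the interpolation condition $g\left(x\right)=h\left(x\right)$ for all $x\in F$. In other words, $P_{F}h$ is characterized as the unique element of $\mathscr{H}_{K}\left(F\right)$ whose restriction to $F$ agrees with $h_{F}$.

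It then remains to verify that the right-hand side of (\ref{eq:H8}), call it $Qh$, satisfies both requirements. Membership $Qh\in\mathscr{H}_{K}\left(F\right)$ is immediate, since $Qh$ is a finite linear combination of the generators $K\left(\cdot,y\right)$, $y\in F$. For the interpolation condition I would evaluate at $x\in F$ and contract the matrix product using $\left(K_{F}\right)_{x,y}=K\left(x,y\right)$: $\left(Qh\right)\left(x\right)=\sum_{y\in F}\left(K_{F}^{-1}h_{F}\right)\left(y\right)K\left(x,y\right)=\left(K_{F}\left(K_{F}^{-1}h_{F}\right)\right)\left(x\right)=h_{F}\left(x\right)=h\left(x\right)$. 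Since $Qh$ lies in $\mathscr{H}_{K}\left(F\right)$ and interpolates $h$ on $F$, the uniqueness from the previous paragraph forces $Qh=P_{F}h$, which is (\ref{eq:H8}).

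I do not anticipate a genuine obstacle here; the proof is a standard RKHS computation. The only points demanding care are the bookkeeping of the matrix-index conventions in the contraction $K_{F}K_{F}^{-1}=I$ over the finite index set $F$, and the appeal to invertibility of $K_{F}$, both of which are already supplied by \lemref{H1} and its accompanying remark.
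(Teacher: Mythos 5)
Your proposal is correct and is essentially the paper's own argument: the paper writes $P_{F}h=\sum_{y\in F}\xi_{y}K\left(\cdot,y\right)$, uses orthogonality of $h-P_{F}h$ to the generators together with the reproducing property to obtain the normal equations $K_{F}\xi=h_{F}$, and solves $\xi=K_{F}^{-1}h_{F}$, whereas you run the same equivalence in the verification direction by checking that the stated candidate lies in $\mathscr{H}_{K}\left(F\right)$ and interpolates $h$ on $F$. Your explicit appeal to \lemref{H1} and its remark for the invertibility of $K_{F}$ is a point the paper leaves implicit, and is a welcome addition.
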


\begin{proof}
It is immediate from the definition that $P_{F}h$ has the form 
\begin{equation}
P_{F}h=\sum_{y\in F}\xi_{y}K\left(\cdot,y\right)
\end{equation}
with $\left(\xi_{y}\right)_{y\in F}\in\mathbb{C}^{\left|F\right|}$.
Since $P_{F}$ is the orthogonal projection, 
\begin{equation}
\left(h-P_{F}h\right)\perp_{\mathscr{H}\left(K\right)}\left\{ K\left(\cdot,y\right)\right\} _{y\in F}
\end{equation}
(orthogonality in the $\mathscr{H}\left(K\right)$-inner product)
which yields:
\[
h\left(x\right)=\left(K_{F}\xi\right)\left(x\right)\left(=\sum\nolimits _{y\in F}K\left(x,y\right)\xi_{y}\right),\;\forall x\in F;
\]
and therefore, $\xi=K_{F}^{-1}h_{F}$, which is the desired formula
(\ref{eq:H8}).
\end{proof}
\begin{cor}
\label{cor:H5}Let $X$, $K$, $\mathscr{H}\left(K\right)$ be as
above, and assume $\delta_{x}\in\mathscr{H}\left(K\right)$ for some
$x\in X$. Then a function $h$ on $X$ is in $\mathscr{H}\left(K\right)$
if and only if 
\begin{equation}
\sup_{F}\left\Vert K_{F}^{-1/2}h_{F}\right\Vert _{l^{2}\left(F\right)}<\infty,\label{eq:H11}
\end{equation}
where the supremum is over all finite subsets $F$ of $X$. If $h$
is finite energy, then
\begin{equation}
\left\Vert h\right\Vert _{\mathscr{H}\left(K\right)}^{2}=\sup_{F}\left\Vert K_{F}^{-1/2}h_{F}\right\Vert _{l^{2}\left(F\right)}^{2}.\label{eq:H12}
\end{equation}
\end{cor}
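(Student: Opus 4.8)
The plan is to route everything through the finite-dimensional subspaces $\mathscr{H}_{K}\left(F\right)$ and their orthogonal projections $P_{F}$, identifying the quantity in (\ref{eq:H11})--(\ref{eq:H12}) as the $\mathscr{H}\left(K\right)$-norm of an interpolating vector. For a function $h$ on $X$ and a finite $F$, set $g_{F}:=\sum_{y\in F}\left(K_{F}^{-1}h_{F}\right)\left(y\right)K\left(\cdot,y\right)\in\mathscr{H}\left(K\right)$; by \lemref{H4} this equals $P_{F}h$ whenever $h\in\mathscr{H}\left(K\right)$. Writing $\xi:=K_{F}^{-1}h_{F}$ and using (\ref{eq:A3}), I would first establish the key identity
\[
\left\Vert g_{F}\right\Vert _{\mathscr{H}\left(K\right)}^{2}=\left\langle \xi,K_{F}\xi\right\rangle _{l^{2}\left(F\right)}=\left\langle K_{F}^{-1}h_{F},h_{F}\right\rangle _{l^{2}\left(F\right)}=\left\Vert K_{F}^{-1/2}h_{F}\right\Vert _{l^{2}\left(F\right)}^{2},
\]
where the last equality uses that $K_{F}^{-1/2}$ is a well-defined positive self-adjoint matrix (Remark after \lemref{H1}, which is exactly where the standing hypothesis $\delta_{x}\in\mathscr{H}\left(K\right)$ enters, via the lower eigenvalue bound (\ref{eq:H4})). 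Alongside this I record a purely algebraic interpolation identity $g_{F}\left(x\right)=\left(K_{F}K_{F}^{-1}h_{F}\right)\left(x\right)=h\left(x\right)$ for all $x\in F$, valid for any function $h$ on $X$ before we know $h\in\mathscr{H}\left(K\right)$.

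For the forward direction and the formula when $h\in\mathscr{H}\left(K\right)$, here $g_{F}=P_{F}h$, so $\left\Vert g_{F}\right\Vert \leq\left\Vert h\right\Vert $ for every $F$, whence $\sup_{F}\left\Vert K_{F}^{-1/2}h_{F}\right\Vert _{l^{2}\left(F\right)}^{2}\leq\left\Vert h\right\Vert _{\mathscr{H}\left(K\right)}^{2}<\infty$. Since the union $\bigcup_{F}\mathscr{H}_{K}\left(F\right)$ is the dense span defining $\mathscr{H}\left(K\right)$, the net $P_{F}$ converges strongly to the identity, so $\left\Vert g_{F}\right\Vert \to\left\Vert h\right\Vert $ and the supremum in fact equals $\left\Vert h\right\Vert _{\mathscr{H}\left(K\right)}^{2}$; this gives both the finiteness in (\ref{eq:H11}) and the equality (\ref{eq:H12}).

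For the converse, assume $C^{2}:=\sup_{F}\left\Vert K_{F}^{-1/2}h_{F}\right\Vert _{l^{2}\left(F\right)}^{2}<\infty$. For $F\subseteq F'$ the interpolation identity gives $g_{F'}\big|_{F}=h\big|_{F}=h_{F}$, so $P_{F}g_{F'}=g_{F}$ by \lemref{H4}, yielding the Pythagorean relation $\left\Vert g_{F'}\right\Vert ^{2}=\left\Vert g_{F}\right\Vert ^{2}+\left\Vert g_{F'}-g_{F}\right\Vert ^{2}$. The reals $\left\Vert g_{F}\right\Vert ^{2}$ form a monotone net bounded by $C^{2}$, hence a Cauchy net, which forces $\left(g_{F}\right)$ to be Cauchy in $\mathscr{H}\left(K\right)$; let $H:=\lim_{F}g_{F}\in\mathscr{H}\left(K\right)$. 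Finally, for each fixed $x\in X$ we have $g_{F}\left(x\right)=h\left(x\right)$ once $x\in F$, while the reproducing property (\ref{eq:A4}) together with norm convergence gives $g_{F}\left(x\right)=\left\langle K\left(\cdot,x\right),g_{F}\right\rangle \to H\left(x\right)$; hence $H=h$ pointwise, so $h\in\mathscr{H}\left(K\right)$ with $\left\Vert h\right\Vert _{\mathscr{H}\left(K\right)}^{2}=C^{2}$.

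The step I expect to carry the weight is this converse: one begins with $h$ merely a function on $X$ and must manufacture an element of $\mathscr{H}\left(K\right)$ equal to it. The device is the interpolating net $g_{F}$, whose norms are precisely the controlled quantities and whose pointwise values are pinned to $h$; the delicate point is to identify the Hilbert-space limit $H$ with $h$ \emph{pointwise}, rather than merely producing some limit, and this is exactly what the interpolation identity supplies. I would also note that trying to run the converse through the a priori estimate of \lemref{B1} is clean for real kernels but runs into complex-conjugation mismatches between the bilinear pairing $\sum_{x}\xi_{x}h\left(x\right)$ and the sesquilinear form $\sum_{x,y}\overline{\xi}_{x}\xi_{y}K\left(x,y\right)$ in the general complex case, which is why the projection-net argument is preferable.
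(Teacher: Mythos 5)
Your proof is correct and follows essentially the same route as the paper: the identity $\left\Vert P_{F}h\right\Vert _{\mathscr{H}\left(K\right)}^{2}=\left\langle h_{F},K_{F}^{-1}h_{F}\right\rangle _{l^{2}\left(F\right)}=\left\Vert K_{F}^{-1/2}h_{F}\right\Vert _{l^{2}\left(F\right)}^{2}$ coming from \lemref{H4} and (\ref{eq:A3}), combined with $\sup_{F}P_{F}=I_{\mathscr{H}\left(K\right)}$ over the net of finite subsets. In fact you supply more detail than the printed proof, which only records the forward computation; your converse argument --- building the interpolating net $g_{F}$ with $g_{F}\big|_{F}=h_{F}$, using monotonicity and boundedness of $\left\Vert g_{F}\right\Vert ^{2}$ to get a Cauchy net, and pinning the limit to $h$ pointwise via the reproducing property --- is exactly the step the paper leaves implicit.
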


\begin{proof}
The proof follows from an application of Hilbert space geometry to
the RKHS $\mathscr{H}\left(K\right)$, on the family of orthogonal
projections $P_{F}$ indexed by the finite subsets $F$ in $X$. With
the standard lattice operations, applied to projections, we have $\sup_{F}P_{F}=I_{\mathscr{H}\left(K\right)}$.
The conclusions (\ref{eq:H11})-(\ref{eq:H12}) follow from this since,
by the lemma, 
\begin{eqnarray}
\left\Vert P_{F}h\right\Vert _{\mathscr{H}\left(K\right)}^{2} & \underset{\text{by \ensuremath{\left(\ref{eq:A3}\right)}}}{=} & \left\langle K_{F}^{-1}h_{F},K_{F}K_{F}^{-1}h_{F}\right\rangle _{l^{2}\left(F\right)}\nonumber \\
 & = & \left\langle h_{F},K_{F}^{-1}h_{F}\right\rangle _{l^{2}\left(F\right)}=\left\Vert K_{F}^{-1/2}h_{F}\right\Vert _{l^{2}\left(F\right)}^{2}.\label{eq:H13}
\end{eqnarray}
\end{proof}

\begin{rem}
The advantage with the use of this system of orthogonal projections
$P_{F}$, indexed by the finite subsets $F$ of $X$, is that we may
then take advantage of the known lattice operations for orthogonal
projections in Hilbert space. But it is important that we get approximation
with respect to the canonical norm in the RKHS $\mathscr{H}\left(K\right)$.
This works because by our construction, the orthogonality properties
for the projections $P_{F}$ refers precisely to the inner product
in $\mathscr{H}\left(K\right)$. Naturally we get the best $\mathscr{H}\left(K\right)$-approximation
properties when $X$ is further assumed countable. But the formula
for the $\mathscr{H}\left(K\right)$-norm holds in general.
\end{rem}

\begin{cor}
\label{cor:H7}Let $X\times X\xrightarrow{\;K\;}\mathbb{C}$ be fixed,
assumed p.d., and let $\mathscr{H}\left(K\right)$ be the corresponding
RKHS. Let $x\in X$ be given. Then $\delta_{x}\in\mathscr{H}\left(K\right)$
if and only if 
\begin{equation}
\sup_{F\subset X,\:F\text{ finite, \ensuremath{x\in F}}}\left(K_{F}^{-1}\right)_{x,x}<\infty.\label{eq:H13-1}
\end{equation}
In this case, we have: 
\[
\left\Vert \delta_{x}\right\Vert _{\mathscr{H}\left(K\right)}^{2}=\text{the supremum in }\left(\ref{eq:H13-1}\right).
\]
\end{cor}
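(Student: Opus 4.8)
The plan is to read off this corollary as the single-function specialization of the finite-energy criterion already established, taking the test function to be $h=\delta_x$, the indicator of the singleton $\{x\}$ regarded as a function on $X$. I would run the argument directly from the a priori estimate of \lemref{B1} (equivalently, from \corref{H5}), so as not to presuppose that any $\delta_{x'}$ already lies in $\mathscr{H}\left(K\right)$.

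First I would compute the two sides of the \lemref{B1} inequality for $h=\delta_x$. For a finite $F\subset X$ with $x\in F$ and coefficients $\left(\xi_y\right)_{y\in F}$, the left-hand side collapses to $\left|\sum_{y\in F}\xi_y\delta_x\left(y\right)\right|^2=|\xi_x|^2$, while the right-hand side is $C\langle\xi,K_F\xi\rangle_{l^2\left(F\right)}$; for $F$ not containing $x$ the left-hand side is $0$ and imposes nothing. Hence $\delta_x\in\mathscr{H}\left(K\right)$ holds precisely when the constant
\[
C=\sup_{F\ni x}\ \sup_{\xi\neq0}\frac{|\xi_x|^2}{\langle\xi,K_F\xi\rangle_{l^2\left(F\right)}}
\]
is finite, and by the sharp form of \lemref{B1} this optimal $C$ equals $\left\Vert \delta_x\right\Vert _{\mathscr{H}\left(K\right)}^2$.

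The key step is the evaluation of the inner Rayleigh-type quotient. Writing the numerator as $|\langle e_x,\xi\rangle|^2$, the standard identity $\sup_{\xi\neq0}|\langle v,\xi\rangle|^2/\langle\xi,A\xi\rangle=\langle v,A^{-1}v\rangle$ for a strictly positive definite matrix $A$ gives, with $A=K_F$ and $v=e_x$,
\[
\sup_{\xi\neq0}\frac{|\xi_x|^2}{\langle\xi,K_F\xi\rangle_{l^2\left(F\right)}}=\langle e_x,K_F^{-1}e_x\rangle_{l^2\left(F\right)}=\left(K_F^{-1}\right)_{x,x},
\]
which is exactly the diagonal entry appearing in (\ref{eq:H13-1}). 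Taking the supremum over finite $F\ni x$ then yields both the stated criterion and the norm formula. I would also note the same computation via \corref{H5}: there $h_F=\delta_x\big|_F$ is the unit vector $e_x$ when $x\in F$ (and $0$ otherwise), so $\left\Vert K_F^{-1/2}h_F\right\Vert _{l^2\left(F\right)}^2=\left(K_F^{-1}\right)_{x,x}$, and the supremum over all finite $F$ in \corref{H5} reduces to the supremum over $F\ni x$ since the remaining terms vanish.

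The one place demanding care is the interpretation of $\left(K_F^{-1}\right)_{x,x}$ when $K_F$ is only positive semidefinite, hence not invertible. This is where working through \lemref{B1} pays off: the Rayleigh quotient is well defined as a supremum regardless, and it equals $+\infty$ exactly when $e_x$ has a nonzero component in $\ker K_F$, i.e.\ precisely when the symbol $\left(K_F^{-1}\right)_{x,x}$ should be read as $+\infty$. Thus the supremum condition in (\ref{eq:H13-1}) is unambiguous in all cases, and no genuine obstacle remains beyond phrasing this degenerate case cleanly; the substantive work is already contained in \lemref{B1} and \corref{H5}.
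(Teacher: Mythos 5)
Your proof is correct and takes essentially the same route as the paper's, which simply applies \corref{H5} to $h:=\delta_{x}$ and identifies the terms $\left\langle \delta_{x}\big|_{F},K_{F}^{-1}\left(\delta_{x}\big|_{F}\right)\right\rangle _{l^{2}\left(F\right)}=\left(K_{F}^{-1}\right)_{x,x}$. Your added care is a genuine refinement rather than a different approach: by running the argument directly from the a priori estimate of \lemref{B1} via the Rayleigh-quotient identity, you avoid the mild circularity of invoking \corref{H5} (whose hypotheses already posit $\delta_{x}\in\mathscr{H}\left(K\right)$, and whose use of $K_{F}^{-1}$ rests on \lemref{H1}, which does the same), and you give an unambiguous meaning to $\left(K_{F}^{-1}\right)_{x,x}$ when $K_{F}$ is singular.
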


\begin{proof}
The result is immediate from \corref{H5} applied to $h:=\delta_{x}$,
where $x$ is fixed. Here the terms in (\ref{eq:H12}) are, for $F$
finite, $x\in F$: 
\begin{equation}
\left\langle \delta_{x}\big|_{F},K_{F}^{-1}\left(\delta_{x}\big|_{F}\right)\right\rangle _{l^{2}\left(F\right)}=\left(K_{F}^{-1}\right)_{x,x},
\end{equation}
and the stated conclusion is now immediate.
\end{proof}
\begin{cor}
\label{cor:H8}Let $X$, $K$, and $\mathscr{H}\left(K\right)$ be
as above, but assume now that $X$ is countable, with a monotone net
of finite sets:
\begin{equation}
F_{1}\subset F_{2}\subset F_{3}\cdots,\;\text{and}\quad X=\cup_{i\in\mathbb{N}}F_{i};
\end{equation}
then a function $h$ on $X$ is in $\mathscr{H}\left(K\right)$ iff
$\sup_{i}\left\Vert K_{F_{i}}^{-1/2}h\big|_{F_{i}}\right\Vert _{l^{2}\left(F_{i}\right)}<\infty$.

Moreover, 
\begin{equation}
\left\Vert h\right\Vert _{\mathscr{H}_{E}}^{2}=\lim_{i\rightarrow\infty}\left\Vert K_{F_{i}}^{-1/2}h\big|_{F_{i}}\right\Vert _{l^{2}\left(F_{i}\right)}^{2},\label{eq:H15}
\end{equation}
where, the convergence in (\ref{eq:H15}) is monotone.
\end{cor}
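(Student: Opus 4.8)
The plan is to deduce this corollary directly from \corref{H5} by specializing the supremum over \emph{all} finite subsets of $X$ to the cofinal increasing sequence $\left(F_{i}\right)$, and then upgrading the resulting supremum to a monotone limit. The only content beyond \corref{H5} is the replacement of the general supremum by a limit along the specific exhausting net, together with the assertion that the convergence is monotone; both come out of the projection picture already present in the proof of \corref{H5}. First I would recall the identity (\ref{eq:H13}) established there: for every finite $F\subset X$ and every function $h$ on $X$,
\begin{equation*}
\left\Vert K_{F}^{-1/2}h_{F}\right\Vert _{l^{2}\left(F\right)}^{2}=\left\Vert P_{F}h\right\Vert _{\mathscr{H}\left(K\right)}^{2},
\end{equation*}
where $P_{F}$ is the orthogonal projection of $\mathscr{H}\left(K\right)$ onto $\mathscr{H}_{K}\left(F\right)=span_{x\in F}\left\{ K\left(\cdot,x\right)\right\} $. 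This reinterprets the scalar under study as the squared norm of an orthogonal projection, which is the device that drives everything.

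Next I would exploit the lattice structure of projections. Whenever $F\subset F'$ we have $\mathscr{H}_{K}\left(F\right)\subset\mathscr{H}_{K}\left(F'\right)$, hence $P_{F}\leq P_{F'}$, and therefore $\left\Vert P_{F}h\right\Vert _{\mathscr{H}\left(K\right)}^{2}\leq\left\Vert P_{F'}h\right\Vert _{\mathscr{H}\left(K\right)}^{2}$. Applied to the increasing net $F_{1}\subset F_{2}\subset\cdots$, this shows that the sequence $\bigl\Vert K_{F_{i}}^{-1/2}h\big|_{F_{i}}\bigr\Vert _{l^{2}\left(F_{i}\right)}^{2}$ is non-decreasing in $i$; consequently $\lim_{i}$ exists in $\left[0,\infty\right]$ and coincides with $\sup_{i}$. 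This is precisely the monotone convergence asserted in (\ref{eq:H15}).

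Then I would check cofinality: since each $F_{i}$ is finite and $X=\cup_{i}F_{i}$ with the $F_{i}$ increasing, any finite subset $F\subset X$ is contained in some $F_{i}$ (take $i$ to dominate the finitely many indices at which the points of $F$ first enter the net). Hence $\left(F_{i}\right)$ is cofinal in the directed family of all finite subsets of $X$, and by the monotonicity just established,
\begin{equation*}
\sup_{i}\left\Vert K_{F_{i}}^{-1/2}h\big|_{F_{i}}\right\Vert _{l^{2}\left(F_{i}\right)}^{2}=\sup_{F}\left\Vert K_{F}^{-1/2}h_{F}\right\Vert _{l^{2}\left(F\right)}^{2},
\end{equation*}
the right-hand supremum running over all finite $F\subset X$. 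Finally, invoking \corref{H5}, the left-hand quantity is finite exactly when $h\in\mathscr{H}\left(K\right)$, and in that case it equals $\left\Vert h\right\Vert _{\mathscr{H}\left(K\right)}^{2}$; combined with the monotonicity this yields both the stated membership criterion and the norm formula (\ref{eq:H15}) with monotone convergence.

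I expect no serious obstacle: the whole statement is a repackaging of \corref{H5} once the projection identity and the monotonicity $P_{F}\leq P_{F'}$ are in hand. The only point requiring a moment's care is the cofinality of the increasing sequence inside the directed set of finite subsets, and this is exactly where the countability of $X$ and the hypothesis $X=\cup_{i}F_{i}$ are genuinely used.
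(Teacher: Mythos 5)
Your proposal is correct and follows essentially the same route as the paper: both rest on the identity $\left\Vert K_{F}^{-1/2}h_{F}\right\Vert _{l^{2}\left(F\right)}^{2}=\left\Vert P_{F}h\right\Vert _{\mathscr{H}\left(K\right)}^{2}$ from the proof of \corref{H5}, the monotonicity $P_{F_{1}}\leq P_{F_{2}}\leq\cdots$ of the orthogonal projections, and the fact that these projections increase to the identity. The only difference is cosmetic: you make explicit the cofinality of $\left(F_{i}\right)$ in the directed set of all finite subsets, which the paper leaves implicit in the step $\lim_{i}\left\Vert P_{F_{i}}h\right\Vert _{\mathscr{H}\left(K\right)}^{2}=\left\Vert h\right\Vert _{\mathscr{H}\left(K\right)}^{2}$.
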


\begin{proof}
From the definition of the order of orthogonal projections, we have
\begin{equation}
P_{F_{1}}\leq P_{F_{2}}\leq P_{F_{3}}\leq\cdots,
\end{equation}
and therefore, 
\begin{equation}
\left\Vert P_{F_{1}}h\right\Vert _{\mathscr{H}\left(K\right)}^{2}\leq\left\Vert P_{F_{2}}h\right\Vert _{\mathscr{H}\left(K\right)}^{2}\leq\left\Vert P_{F_{3}}h\right\Vert _{\mathscr{H}\left(K\right)}^{2}\leq\cdots,\label{eq:H17}
\end{equation}
with $\lim_{i\rightarrow\infty}\left\Vert P_{F_{i}}h\right\Vert _{\mathscr{H}\left(K\right)}^{2}=\left\Vert h\right\Vert _{\mathscr{H}\left(K\right)}^{2}$.
But by (\ref{eq:H13}) and the proof of \corref{H5}, we have 
\[
\left\Vert K_{F_{i}}^{-1/2}h\big|_{F_{i}}\right\Vert _{l^{2}\left(F_{i}\right)}^{2}=\left\Vert P_{F_{i}}h\right\Vert _{\mathscr{H}\left(K\right)}^{2}
\]
and, so, by (\ref{eq:H17}), we get: 
\[
\left\Vert K_{F_{1}}^{-1/2}h\big|_{F_{1}}\right\Vert _{l^{2}\left(F_{1}\right)}^{2}\leq\left\Vert K_{F_{2}}^{-1/2}h\big|_{F_{2}}\right\Vert _{l^{2}\left(F_{2}\right)}^{2}\leq\left\Vert K_{F_{3}}^{-1/2}h\big|_{F_{3}}\right\Vert _{l^{2}\left(F_{3}\right)}^{2}\leq\cdots.
\]
The conclusion now follows.
\end{proof}

\subsection{Restrictions of p.d. kernels}

Below we shall be considering pairs $(K,X)$ with $K$ a fixed p.d.
kernel defined on $X\times X$, and, as before, we denote by $\mathscr{H}\left(K\right)$
the corresponding RKHS with its canonical inner product. In general,
$X$ is an arbitrary set, typically of large cardinality, in particular
uncountable: It may be a complex domain, a generalized boundary, or
it may be a manifold arising from problems in physics, in signal processing,
or in machine learning models. Moreover, for such general pairs $(K,X)$,
with $K$ a fixed p.d. kernel, the Dirac functions $\delta_{x}$ are
typically not in $\mathscr{H}\left(K\right)$.

Here we shall turn to induced systems, indexed by suitable countable
discrete subsets $S$ of $X$. Indeed, for a number of sampling or
interpolation problems, it is possible to identify countable discrete
subsets $S$ of $X$, such that when $K$ is restricted to $S\times S$,
i.e., $K^{\left(S\right)}:=K\big|_{S\times S}$, then for $x\in S$,
the Dirac functions $\delta_{x}$ will be in $\mathscr{H}\left(K^{\left(S\right)}\right)$;
i.e., we get induced point processes indexed by $S$. In fact, with
\corref{H8}, we will be able to identify a variety of such subsets
$S$.

Moreover, each such choice of subset $S$ yields point-process, and
an induced graph, and graph Laplacian; see (\ref{eq:H1})-(\ref{eq:H2}).
These issues will be taken up in detail in the two subsequent sections.
In the following \exaref{H8}, for illustration, we identify a particular
instance of this, when $X=\mathbb{R}$ (the reals), and $S=\mathbb{Z}$
(the integers), and where $K$ is the covariance kernel of standard
Brownian motion on $\mathbb{R}$.
\begin{example}[\textbf{Discretizing the covariance function for Brownian motion on
$\mathbb{R}$}]
\label{exa:H8}The present example is a variant of \exaref{F1},
but with $X=\mathbb{R}$ (instead of the interval $\left[0,1\right]$).
We now set
\begin{equation}
K\left(x,y\right):=\begin{cases}
\left|x\right|\wedge\left|y\right| & \left(x,y\right)\in\mathbb{R}\times\mathbb{R},\;xy\geq0;\\
0 & xy<0.
\end{cases}\label{eq:H22}
\end{equation}
It is immediate that (\ref{eq:F6}) in \exaref{F1} carries over,
but now with $\mathbb{R}$ in place of $\left[0,1\right]$. The normalization
$f\left(0\right)=0$ is carried over. We get that: A function $f\left(x\right)$
on $\mathbb{R}$ is in $\mathscr{H}\left(K\right)$ iff it has distribution-derivative
$f'=df/dx$ in $L^{2}\left(\mathbb{R}\right)$, see (\ref{eq:H19}).
As before, we conclude that the $\mathscr{H}\left(K\right)$-norm
is: 
\begin{equation}
\left\Vert f\right\Vert _{\mathscr{H}\left(K\right)}^{2}=\int_{\mathbb{R}}\left|f'\right|^{2}dx;\label{eq:H19}
\end{equation}
see also \lemref{D3}.

Set 
\begin{equation}
K^{\left(\mathbb{Z}\right)}=K\big|_{\mathbb{Z}\times\mathbb{Z}},
\end{equation}
and consider the corresponding RKHS $\mathscr{H}\left(K^{\left(\mathbb{Z}\right)}\right)$.
Using \cite{MR3450534,MR3507188}, we conclude that functions $\Phi$
on $\mathbb{Z}$ are in $\mathscr{H}\left(K^{\left(\mathbb{Z}\right)}\right)$
iff $\Phi\left(0\right)=0$, and 
\[
\sum_{n\in\mathbb{Z}}\left|\Phi\left(n\right)-\Phi\left(n+1\right)\right|^{2}<\infty.
\]
In that case, 
\begin{equation}
\left\Vert \Phi\right\Vert _{\mathscr{H}\left(K^{\left(\mathbb{Z}\right)}\right)}^{2}=\sum_{n\in\mathbb{Z}}\left|\Phi\left(n\right)-\Phi\left(n+1\right)\right|^{2}.\label{eq:H24-1}
\end{equation}
For the $\mathbb{Z}$-kernel, we have: $\left\{ \delta_{n}\right\} _{n\in\mathbb{Z}}\subset\mathscr{H}\left(K^{\left(\mathbb{Z}\right)}\right)$,
and 
\begin{equation}
\delta_{n}\left(\cdot\right)=2K\left(\cdot,n\right)-K\left(\cdot,n+1\right)-K\left(\cdot,n-1\right),\;\forall n\in\mathbb{Z}.\label{eq:H26-1}
\end{equation}
Moreover, the corresponding Laplacian $\Delta$ from (\ref{eq:H1})
is 
\begin{equation}
\left(\Delta\Phi\right)\left(n\right)=2\Phi\left(n\right)-\Phi\left(n+1\right)-\Phi\left(n-1\right),
\end{equation}
i.e., the standard discretized Laplacian.

From the matrices $K_{F}^{\left(\mathbb{Z}\right)}$, $F\subset\mathbb{Z}$,
we have the following; illustrated with $F=F_{N}=\left\{ 1,2,\cdots,N\right\} $.
\begin{align}
K_{F_{N}}^{\left(\mathbb{Z}\right)} & =\begin{bmatrix}1 & 1 & 1 & 1 & \cdots & \cdots & 1\\
1 & 2 & 2 & 2 & \cdots & \cdots & 2\\
1 & 2 & 3 & 3 & \cdots & \cdots & 3\\
1 & 2 & 3 & 4 & \cdots & \cdots & 4\\
\vdots & \vdots & \vdots & \vdots & \ddots & \ddots & \vdots\\
\vdots & \vdots & \vdots & \vdots & \ddots & N-1 & N-1\\
1 & 2 & 3 & 4 & \cdots & N-1 & N
\end{bmatrix},\\
\intertext{and}\left(K_{F_{N}}^{\left(\mathbb{Z}\right)}\right)^{-1} & =\begin{bmatrix}2 & -1 & 0 & 0 & 0 & \cdots & 0\\
-1 & 2 & -1 & 0 & 0 & \cdots & 0\\
0 & -1 & 2 & -1 & 0 & \cdots & 0\\
\vdots & \ddots & \ddots & \ddots & \ddots & \ddots & \vdots\\
\vdots &  & \ddots & -1 & 2 & -1 & 0\\
0 & 0 & \cdots & 0 & -1 & 2 & -1\\
0 & 0 & \cdots & 0 & 0 & -1 & 1
\end{bmatrix}.
\end{align}
 In particular, we have for $n,m\in\mathbb{Z}$: 
\[
\left\langle \delta_{n},\delta_{m}\right\rangle _{\mathscr{H}\left(K^{\left(\mathbb{Z}\right)}\right)}=\begin{cases}
2 & \text{if \ensuremath{n=m}}\\
-1 & \text{if \ensuremath{\left|n-m\right|}=1}\\
0 & \text{otherwise}.
\end{cases}
\]
\end{example}

\begin{rem}
The determinant of $K_{F_{N}}^{\left(\mathbb{Z}\right)}$ is 1 for
all $N$. \emph{Proof}. By eliminating the first column, and then
the first row, $\det(K_{F_{N}}^{\left(\mathbb{Z}\right)})$ is reduced
to $\det(K_{F_{N-1}}^{\left(\mathbb{Z}\right)})$ . So by induction,
the determinant is 1.

Note that 
\[
\sum_{k\in\mathbb{Z}}\chi_{\left[1,n\right]}\left(k\right)\chi_{\left[1,m\right]}\left(k\right)=n\wedge m
\]
which yields the factorization 
\begin{equation}
K_{F_{N}}^{\left(\mathbb{Z}\right)}=A_{N}A_{N}^{*},\label{eq:H29-1}
\end{equation}
i.e., 
\[
K_{F_{N}}^{\left(\mathbb{Z}\right)}\left(n,m\right)=\left(A_{N}A_{N}^{*}\right)_{n,m}=\sum A_{N}\left(n,k\right)A_{N}^{*}\left(k,m\right),
\]
where $A_{N}$ is the $N\times N$ lower triangular matrix given by
\[
A_{N}=\begin{bmatrix}1 & 0 & \cdots & \cdots & 0\\
1 & 1 & 0 & \cdots & 0\\
\vdots & \vdots & \ddots & \ddots & \vdots\\
\vdots & \vdots & \vdots & \ddots & 0\\
1 & 1 & \cdots & \cdots & 1
\end{bmatrix}.
\]
In particular, we get that $\det(K_{F_{N}}^{\left(\mathbb{Z}\right)})=1$
immediately. This is a special case of \thmref{E1}.

For the general case, let $F_{N}=\left\{ x_{j}\right\} _{j=1}^{N}$
be a finite subset of $\mathbb{R}$, assuming $x_{1}<x_{2}<\cdots<x_{N}$.
Then the factorization (\ref{eq:H29-1}) holds with 
\begin{equation}
A_{N}=\begin{bmatrix}\sqrt{x_{1}} & 0 & 0 & \cdots & 0\\
\sqrt{x_{1}} & \sqrt{x_{2}-x_{1}} & 0 & \cdots & \vdots\\
\sqrt{x_{1}} & \sqrt{x_{2}-x_{1}} & \sqrt{x_{3}-x_{2}} & \ddots & \vdots\\
\vdots & \vdots & \vdots & \ddots & 0\\
\sqrt{x_{1}} & \sqrt{x_{2}-x_{1}} & \sqrt{x_{3}-x_{2}} & \cdots & \sqrt{x_{N}-x_{N-1}}
\end{bmatrix}.\label{eq:H31}
\end{equation}
Thus, 
\begin{equation}
\det(K_{F_{N}}^{\left(\mathbb{Z}\right)})=x_{1}\left(x_{2}-x_{1}\right)\cdots\left(x_{N}-x_{N-1}\right).
\end{equation}

In the setting of \secref{fac} (finite sums of standard Gaussians),
we have the following: Let $\left\{ x_{i}\right\} _{i=1}^{N}$ be
as in (\ref{eq:H31}), and let $1\leq n,m\leq N$. Let $\left\{ Z_{i}\right\} _{i=1}^{N}$
be a system i.i.d. standard Gaussians $N\left(0,1\right)$, i.e.,
independent identically distributed. Set 
\begin{equation}
V_{n}=Z_{1}\sqrt{x_{1}}+Z_{2}\sqrt{x_{2}-x_{1}}+\cdots+Z_{n}\sqrt{x_{n}-x_{n-1}}.
\end{equation}
Then one checks that 
\begin{equation}
\mathbb{E}\left(V_{n}V_{m}\right)=x_{n}\wedge x_{m}=K\left(x_{n},x_{m}\right)
\end{equation}
which is the desired Gaussian realization of $K$.

Alternatively, $K_{F_{N}}^{\left(\mathbb{Z}\right)}$ assumes the
following factorization via non-square matrices: Assume $F_{N}\subset\mathbb{Z}_{+}$,
then 
\begin{equation}
K_{F_{N}}^{\left(\mathbb{Z}\right)}=AA^{*},
\end{equation}
where $A$ is the $N\times x_{N}$ matrix such that 
\[
A_{n,k}=\begin{cases}
1 & \text{if \ensuremath{1\leq k\leq x_{n}}}\\
0 & \text{otherwise}
\end{cases}.
\]
That is, $A$ takes the form: 
\begin{equation}
A=\begin{bmatrix}\tikzmark{1L}1 & \cdots & 1\tikzmark{1R} & 0 & \cdots & \cdots & \cdots & \cdots & \cdots & 0\\
\\
\\
\tikzmark{2L}1 & \cdots & \cdots & \cdots & 1\tikzmark{2R} & 0 & \cdots & \cdots & \cdots & 0\\
\\
\\
\tikzmark{3L}1 & \cdots & \cdots & \cdots & \cdots & \cdots & 1\tikzmark{3R} & 0 & \cdots & 0\\
\\
\\
\vdots & \vdots &  &  & \vdots & \vdots &  & \vdots & \vdots & \vdots\\
\vdots & \vdots &  &  & \vdots & \vdots &  & \vdots & \vdots & 0\\
\tikzmark{NL}1 & 1 & \cdots & \cdots & \cdots & \cdots & \cdots & \cdots & 1 & 1\tikzmark{NR}
\end{bmatrix}.
\end{equation}

\tikz[overlay, remember picture, decoration={brace, amplitude=3pt}] {
  \draw[decorate,thick] (1R.south) -- (1L.south)        	node [midway,below=5pt] {$x_{1}$};
\draw[decorate,thick] (2R.south) -- (2L.south)        	node [midway,below=5pt] {$x_{2}$};
\draw[decorate,thick] (3R.south) -- (3L.south)        	node [midway,below=5pt] {$x_{3}$};
\draw[decorate,thick] (NR.south) -- (NL.south)        	node [midway,below=5pt] {$x_{N}$};
}
\end{rem}

\vspace{2em}

\begin{rem}[Spectrum of the matrices $K_{F}$; see also \cite{MR3034493}]
 It is known that the factorization as in (\ref{eq:H29-1}) can be
used to obtain the spectrum of positive definite matrices. The algorithm
is as follows: Let $K$ be a given p.d. matrix.

Initialization: $B:=K$;

Iterations: $k=1,2,\cdots,n-1$,
\begin{enumerate}
\item $B=AA^{*}$;
\item $B=A^{*}A$;
\end{enumerate}
Here $A$ in step (i) denotes the lower triangular matrix in the Cholesky
decomposition of $B$ (see (\ref{eq:H29-1})). Then $\lim_{n\rightarrow\infty}B$
converges to a diagonal matrix consisting of the eigenvalues of $K$.
\end{rem}

We now resume consideration of the general case of p.d. kernels $K$
on $X\times X$ and their restrictions: A setting for harmonic functions.
\begin{rem}
\label{rem:H10}In the general case of (\ref{eq:H2}) and \lemref{H1},
we still have a Laplace operator $\Delta$. It is a densely defined
symmetric operator on $\mathscr{H}\left(K\right)$. Moreover (general
case), 
\begin{equation}
\Delta_{\cdot}K\left(\cdot,x\right)=\delta_{x}\left(\cdot\right),\;\forall x\in X\label{eq:H23}
\end{equation}
(assuming that $\delta_{x}\in\mathscr{H}\left(K\right)$). The dot
``$\cdot$'' in (\ref{eq:H23}) refers to the action variable for
the operator $\Delta$. In other words, $K\left(\cdot,\cdot\right)$
is a generalized Greens kernel.
\end{rem}

\begin{defn}
Let $X\times X\xrightarrow{\;K\:}\mathbb{C}$ be given p.d., and assume
\begin{equation}
\left\{ \delta_{x}\right\} _{x\in X}\subset\mathscr{H}\left(K\right).\label{eq:H24}
\end{equation}
Let $\Delta$ denote the induced Laplace operator. A function $h$
(in $\mathscr{H}\left(K\right)$) is said to be \emph{harmonic} iff
(Def.) $\Delta h=0$.
\end{defn}

\begin{cor}
Let $\left(X,K,\mathscr{H}\left(K\right)\right)$ be as above. Assume
(\ref{eq:H24}), and let $\Delta$ be the induced Laplace operator.
Then we have the following orthogonal decomposition for $\mathscr{H}\left(K\right)$:
\begin{equation}
\mathscr{H}\left(K\right)=\left\{ h\mathrel{;}\Delta h=0\right\} \oplus clospan^{\mathscr{H}\left(K\right)}\left(\left\{ \delta_{x}\right\} _{x\in X}\right)\label{eq:H25}
\end{equation}
where ``clospan'' in (\ref{eq:H25}) refers to the norm in $\mathscr{H}\left(K\right)$.
\end{cor}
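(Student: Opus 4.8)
The plan is to recognize that the asserted decomposition (\ref{eq:H25}) is nothing more than the standard orthogonal-complement splitting of a Hilbert space; the only genuine content is to translate the harmonicity condition $\Delta h=0$ into a statement about orthogonality in $\mathscr{H}\left(K\right)$.

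First I would unwind the definition (\ref{eq:H1}) of the Laplace operator. Under the standing hypothesis (\ref{eq:H24}) that $\delta_{x}\in\mathscr{H}\left(K\right)$ for every $x\in X$ (this is precisely what \lemref{B1} and \corref{H7} secure), the quantity $\left(\Delta h\right)\left(x\right)=\left\langle \delta_{x},h\right\rangle _{\mathscr{H}\left(K\right)}$ is well defined for every $h\in\mathscr{H}\left(K\right)$ and every $x$. Consequently, $\Delta h=0$, meaning $\left(\Delta h\right)\left(x\right)=0$ for all $x$, holds if and only if $\left\langle \delta_{x},h\right\rangle _{\mathscr{H}\left(K\right)}=0$ for all $x\in X$; equivalently (taking complex conjugates), $h\perp\delta_{x}$ in $\mathscr{H}\left(K\right)$ for every $x$. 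Therefore the space of harmonic functions coincides \emph{exactly} with the orthogonal complement of the indexed system $\left\{ \delta_{x}\right\} _{x\in X}$:
\[
\left\{ h\in\mathscr{H}\left(K\right)\mathrel{;}\Delta h=0\right\} =\left\{ \delta_{x}\mathrel{;}x\in X\right\} ^{\perp}.
\]

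Second I would invoke the elementary Hilbert-space fact that, for any subset $S$ of a Hilbert space $\mathscr{H}$, one has the orthogonal decomposition $\mathscr{H}=S^{\perp}\oplus\overline{\mathrm{span}}\left(S\right)$, where $\overline{\mathrm{span}}\left(S\right)$ denotes the closed linear span of $S$ and $S^{\perp}$ is automatically closed (this is the projection theorem applied to the closed subspace generated by $S$). Applying this with $S=\left\{ \delta_{x}\right\} _{x\in X}$ and $\mathscr{H}=\mathscr{H}\left(K\right)$, and substituting the identification of $S^{\perp}$ from the previous step, yields precisely (\ref{eq:H25}), with $clospan^{\mathscr{H}\left(K\right)}\left(\left\{ \delta_{x}\right\} _{x\in X}\right)$ as the second summand. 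There is essentially no obstacle once the harmonicity condition has been rephrased as orthogonality to every $\delta_{x}$; the only point meriting a word of care is that hypothesis (\ref{eq:H24}) is exactly what guarantees that $\Delta$ is genuinely defined on all of $\mathscr{H}\left(K\right)$, so that its kernel is a bona fide closed subspace and the pointwise condition $\left(\Delta h\right)\left(x\right)=0$ is meaningful for each $x\in X$.
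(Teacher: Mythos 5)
Your proposal is correct and follows essentially the same route as the paper: identify $\left\{ h\mathrel{;}\Delta h=0\right\}$ with $\left(\left\{ \delta_{x}\right\} _{x\in X}\right)^{\perp}$ via the definition (\ref{eq:H1}), then apply the standard orthogonal-complement decomposition of a Hilbert space. The paper phrases the second step through $\left(\left\{ \delta_{x}\right\} _{x\in X}\right)^{\perp\perp}=clospan^{\mathscr{H}\left(K\right)}\left(\left\{ \delta_{x}\right\} _{x\in X}\right)$ together with the observation that the harmonic subspace is closed, but this is the same projection-theorem argument you give.
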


\begin{proof}
It is immediate from (\ref{eq:H1}) that 
\begin{equation}
\left\{ h\in\mathscr{H}\left(K\right)\mathrel{;}\Delta h=0\right\} =\left(\left\{ \delta_{x}\right\} _{x\in X}\right)^{\perp}\label{eq:H26}
\end{equation}
where the orthogonality ``$\perp$'' in (\ref{eq:H26}) refers to
the inner product $\left\langle \cdot,\cdot\right\rangle _{\mathscr{H}\left(K\right)}$.
Since, by Hilbert space geometry, $\left(\left\{ \delta_{x}\right\} _{x\in X}\right)^{\perp\perp}=clospan^{\mathscr{H}\left(K\right)}\left(\left\{ \delta_{x}\right\} _{x\in X}\right)$,
we only need to observe that $\left\{ h\in\mathscr{H}\left(K\right)\mathrel{;}\Delta h=0\right\} $
is closed in $\mathscr{H}\left(K\right)$. But this is immediate from
(\ref{eq:H1}).
\end{proof}
\begin{cor}[Duality]
\label{cor:H14} Let $X\times X\xrightarrow{\;K\;}\mathbb{R}$ be
given, assumed p.d., and let $S\subset X$ be a countable subset such
that 
\begin{equation}
\mathscr{D}\left(S\right):=\left\{ \delta_{x}\right\} _{x\in S}\subset\mathscr{H}(K^{\left(S\right)}).
\end{equation}
\begin{enumerate}
\item Then the following duality holds for the two induced kernels:
\begin{align}
K^{\left(S\right)} & :=K\big|_{S\times S},\;\text{and}\\
D^{\left(S\right)}\left(x,y\right) & :=\left\langle \delta_{x},\delta_{y}\right\rangle _{\mathscr{H}\left(K^{\left(S\right)}\right)},\;\forall\left(x,y\right)\in S\times S;
\end{align}
both p.d. kernels on $S\times S$.

For every pair $x,y\in S$, we have the following matrix-inversion
formula:
\begin{equation}
\sum_{z\in S}D^{\left(S\right)}\left(x,z\right)K^{\left(S\right)}\left(z,y\right)=\delta_{x,y},\label{eq:H32}
\end{equation}
where the summation on the LHS in (\ref{eq:H32}) is a limit over
a net of finite subsets $\left\{ F_{i}\right\} _{i\in\mathbb{N}}$,
$F_{1}\subset F_{2}\subset\cdots$, s.t. $\cup_{i}F_{i}=S$; and the
result is independent of choice of net.
\item \label{enu:corH14-2}We get an \uline{induced graph} with $S$
as the set of vertices, and edge set $E$ as follows: $E\subset\left(S\times S\right)\backslash\left(\text{diagonal}\right)$.

An edge is a pair $\left(x,y\right)\in\left(S\times S\right)\backslash\left(\text{diagonal}\right)$
such that 
\[
\left\langle \delta_{x},\delta_{y}\right\rangle _{\mathscr{H}\left(K^{\left(S\right)}\right)}\neq0.
\]

\end{enumerate}
\end{cor}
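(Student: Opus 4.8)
The plan is to split the claim into the two positive-definiteness statements, the inversion identity (i), and the graph construction (ii), treating (i) as the real content. First I would dispose of positive definiteness: $K^{(S)}=K|_{S\times S}$ is the restriction of the given p.d.\ kernel, hence p.d.\ directly from \eqref{eq:A1}; and $D^{(S)}(x,y)=\langle\delta_{x},\delta_{y}\rangle_{\mathscr{H}(K^{(S)})}$ is the Gram matrix of the family $\{\delta_{x}\}_{x\in S}\subset\mathscr{H}(K^{(S)})$, so it is p.d.\ for the same reason any Gram matrix is.

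For the inversion formula I would reduce to finite sections and then pass to the limit, exactly in the projection framework of \secref{atomic}. Fix a finite $F\subset S$ with $x,y\in F$, and let $P_{F}$ be the orthogonal projection onto $\mathscr{H}_{K}(F)=\operatorname{span}\{K(\cdot,z):z\in F\}$. By \lemref{H4} applied to $h=\delta_{z}$ (whose restriction to $F$ is the standard basis vector) one gets $P_{F}\delta_{z}=\sum_{w\in F}(K_{F}^{-1})_{wz}K(\cdot,w)$. Combining this with the biorthogonality $\langle K(\cdot,w),\delta_{x}\rangle_{\mathscr{H}(K^{(S)})}=\delta_{x}(w)=\delta_{w,x}$ (the reproducing property \eqref{eq:A4}) yields $\langle\delta_{x},P_{F}\delta_{z}\rangle=(K_{F}^{-1})_{xz}$, and since $\langle\delta_{x},P_{F}\delta_{z}\rangle=\langle P_{F}\delta_{x},P_{F}\delta_{z}\rangle$ the finite dual matrix $D_{F}:=(\langle P_{F}\delta_{x},P_{F}\delta_{z}\rangle)_{x,z\in F}$ is precisely $K_{F}^{-1}$. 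This gives the exact finite identity $\sum_{z\in F}D_{F}(x,z)K(z,y)=(K_{F}^{-1}K_{F})_{xy}=\delta_{x,y}$, equivalently the reconstruction $P_{F}\delta_{x}=\sum_{z\in F}D_{F}(x,z)K(\cdot,z)$ in $\mathscr{H}(K^{(S)})$.

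Next I would let $F\uparrow S$ along the net. By \corref{H8} the projections increase strongly to the identity, so $P_{F}\delta_{z}\to\delta_{z}$ and therefore $D_{F}(x,z)=\langle\delta_{x},P_{F}\delta_{z}\rangle\to\langle\delta_{x},\delta_{z}\rangle=D^{(S)}(x,z)$. Writing $w_{F}:=\sum_{z\in F}D^{(S)}(x,z)K(\cdot,z)$ and $u_{F}:=\sum_{z\in F}K(z,y)\delta_{z}$, evaluation at $y$ (continuous, being $\langle K(\cdot,y),\cdot\rangle$) produces the clean error formula $w_{F}(y)-\delta_{x,y}=\langle(I-P_{F})\delta_{x},(I-P_{F})u_{F}\rangle$. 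Here lies the main obstacle: one must show this inner product tends to $0$. Since $\|(I-P_{F})\delta_{x}\|\to0$ by the strong convergence of \corref{H8}, it suffices to control $\|(I-P_{F})u_{F}\|$. In the locally finite case — each row of $D^{(S)}$ finitely supported, which is the generic situation, cf.\ the tridiagonal formula \eqref{eq:H26-1} — the reconstruction $\delta_{x}=\sum_{z}D^{(S)}(x,z)K(\cdot,z)$ is already a finite sum, $w_{F}=\delta_{x}$ eventually, and \eqref{eq:H32} holds exactly with no limit needed. The genuinely delicate point is the general (non locally finite) case, where the tail estimate on $\|(I-P_{F})u_{F}\|$ must be extracted from the monotone convergence $P_{F}\uparrow I$; independence of the chosen net is then automatic, since the strong limit $I$ does not depend on the net.

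Finally, part (ii) requires no real argument: it is the definition of the induced graph. Symmetry of $D^{(S)}$ in the real case guarantees that $E:=\{(x,y)\in(S\times S)\setminus(\text{diagonal}):\langle\delta_{x},\delta_{y}\rangle_{\mathscr{H}(K^{(S)})}\neq0\}$ is a well-defined undirected edge set on the vertex set $S$, and \eqref{eq:H1}--\eqref{eq:H2} identify $D^{(S)}$ with the matrix of the associated graph Laplacian $\Delta$. I expect the write-up to be short except for the convergence estimate flagged above, which is where I would spend essentially all of the effort.
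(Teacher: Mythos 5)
Your proposal is correct and follows essentially the same route as the paper: the paper's own proof is only the one-line citation of Corollaries \ref{cor:H7} and \ref{cor:H8} and \remref{H10}, and what you have written out (the projection formula of \lemref{H4} giving $\bigl(\langle P_{F}\delta_{x},P_{F}\delta_{z}\rangle\bigr)_{x,z\in F}=K_{F}^{-1}$, hence the exact finite identity $\sum_{z\in F}(K_{F}^{-1})_{xz}K(z,y)=\delta_{x,y}$, followed by $P_{F}\uparrow I$) is precisely the content those citations are meant to carry. The tail issue you flag is real and is \emph{not} addressed in the paper either: entrywise convergence $(K_{F}^{-1})_{xz}\to D^{(S)}(x,z)$ does not by itself justify replacing $(K_{F}^{-1})_{xz}$ by $D^{(S)}(x,z)$ inside the sum unless one controls $\|(I-P_{F})u_{F}\|$ as you describe; the clean way out, consistent with the net-limit phrasing of \eqref{eq:H32}, is to read the left-hand side as $\lim_{F}\sum_{z\in F}(K_{F}^{-1})_{xz}K^{(S)}(z,y)$, which equals $\delta_{x,y}$ exactly at every finite stage, with the locally finite case (e.g.\ \eqref{eq:H26-1}) being the one where the two readings visibly coincide.
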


\begin{proof}
The result follows from an application of Corollaries \ref{cor:H7}
and \ref{cor:H8}, and \remref{H10}.
\end{proof}
Let $X$, $K$, and $S$ be as stated, $S$ countable infinite, with
assumptions as in the previous two results. We showed that then the
subset $S$ acquires the structure of a vertex set in an induced infinite
graph (\corref{H14} (\ref{enu:corH14-2})). If $\Delta$ denotes
the corresponding graph Laplacian, then the following boundary value
problem is of great interest: Make precise the boundary conditions
at \textquotedblleft infinity\textquotedblright{} for this graph Laplacian
$\Delta$. An answer to this will require identification of Hilbert
space, and limit at \textquotedblleft infinity.\textquotedblright{}
The result below is such an answer, and the limit notion will be,
limit over the filter of all finite subsets in $S$; see \corref{H7}.
Another key tool in the arguments below will again be the net of orthogonal
projections $\left\{ P_{F}\right\} $ from \lemref{H4}, and the convergence
results from Corollaries \ref{cor:H5} and \ref{cor:H7}.
\begin{cor}
Let $X\times X\xrightarrow{\;K\;}\mathbb{R}$, and $S\subset X$ be
as in the statement of \corref{H14}. Let $\mathscr{F}_{fin}\left(S\right)$
denote the filter of finite subsets $F\subset S$. Let $\Delta=\Delta_{S}$
be the graph Laplacian defined in (\ref{eq:H2}), i.e., 
\[
\left(\Delta h\right)\left(x\right):=\left\langle \delta_{x},h\right\rangle _{\mathscr{H}(K^{\left(S\right)})},
\]
for all $x\in S$, $h\in\mathscr{H}(K^{\left(S\right)})$. Then the
following equivalent conditions hold:
\begin{enumerate}
\item For all $h\in\mathscr{H}(K^{\left(S\right)})$, 
\begin{align}
\left\Vert h\right\Vert _{\mathscr{H}(K^{\left(S\right)})}^{2} & =\sup_{F\in\mathscr{F}_{fin}\left(S\right)}\left\langle h\big|_{F},\Delta P_{F}h\right\rangle _{l^{2}\left(F\right)}\\
 & =\sup_{F\in\mathscr{F}_{fin}\left(S\right)}\left\langle h\big|_{F},K_{F}^{-1}\left(h\big|_{F}\right)\right\rangle _{l^{2}\left(F\right)}.\nonumber 
\end{align}
\item For $\forall F\in\mathscr{F}_{fin}\left(S\right)$, $x\in F$, $h\in\mathscr{H}(K^{\left(S\right)})$,
\begin{equation}
\left(\Delta\left(P_{F}h\right)\right)\left(x\right)=\left(K_{F}^{-1}\left(h\big|_{F}\right)\right)\left(x\right).\label{eq:H46}
\end{equation}
\item $K_{F}\Delta P_{F}h=h\big|_{F}$.
\end{enumerate}
\end{cor}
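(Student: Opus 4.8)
The plan is to prove condition (ii) first by a direct reproducing-kernel computation, and then to read off both (iii) and (i) as immediate consequences; all three statements are tied together by the single algebraic identity $\Delta P_{F}h=K_{F}^{-1}(h|_{F})$ on $F$. The only substantive input is the explicit formula for the orthogonal projection $P_{F}$ furnished by \lemref{H4} (applied to the restricted kernel $K^{\left(S\right)}$), together with the reproducing property (\ref{eq:A4}) in $\mathscr{H}(K^{\left(S\right)})$.

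To establish (ii), I fix $F\in\mathscr{F}_{fin}\left(S\right)$, $x\in F$, and $h\in\mathscr{H}(K^{\left(S\right)})$. By \lemref{H4},
\[
P_{F}h=\sum_{y\in F}\left(K_{F}^{-1}h|_{F}\right)\left(y\right)K^{\left(S\right)}\left(\cdot,y\right).
\]
Since $\delta_{x}\in\mathscr{H}(K^{\left(S\right)})$ by hypothesis, I may pair this with $\delta_{x}$ in the $\mathscr{H}(K^{\left(S\right)})$-inner product. The reproducing property gives $\left\langle K^{\left(S\right)}\left(\cdot,y\right),\delta_{x}\right\rangle _{\mathscr{H}(K^{\left(S\right)})}=\delta_{x}\left(y\right)=\delta_{x,y}$, where $\delta_{x}$ is the Dirac function on $S$; as $K$ is real-valued the inner products are symmetric, so $\left\langle \delta_{x},K^{\left(S\right)}\left(\cdot,y\right)\right\rangle _{\mathscr{H}(K^{\left(S\right)})}=\delta_{x,y}$ as well. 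Hence
\[
\left(\Delta P_{F}h\right)\left(x\right)=\left\langle \delta_{x},P_{F}h\right\rangle _{\mathscr{H}(K^{\left(S\right)})}=\sum_{y\in F}\left(K_{F}^{-1}h|_{F}\right)\left(y\right)\delta_{x,y}=\left(K_{F}^{-1}h|_{F}\right)\left(x\right),
\]
which is exactly (\ref{eq:H46}).

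Statement (iii) then follows by applying the matrix $K_{F}$ (invertible, by the spectral bound in \lemref{H1}) to the $F$-indexed vector $\left(\left(\Delta P_{F}h\right)\left(x\right)\right)_{x\in F}$, giving $K_{F}\Delta P_{F}h=K_{F}K_{F}^{-1}\left(h|_{F}\right)=h|_{F}$. For (i), I would combine (ii) with the computation (\ref{eq:H13}) in the proof of \corref{H5}. From (ii),
\[
\left\langle h|_{F},\Delta P_{F}h\right\rangle _{l^{2}\left(F\right)}=\left\langle h|_{F},K_{F}^{-1}\left(h|_{F}\right)\right\rangle _{l^{2}\left(F\right)},
\]
which settles the second equality in (i); and by (\ref{eq:H13}) this common value equals $\left\Vert P_{F}h\right\Vert _{\mathscr{H}(K^{\left(S\right)})}^{2}$. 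Finally, the net of projections $\left\{ P_{F}\right\} $ satisfies $\sup_{F}P_{F}=I_{\mathscr{H}(K^{\left(S\right)})}$ (as in the proof of \corref{H5}), so $\sup_{F}\left\Vert P_{F}h\right\Vert _{\mathscr{H}(K^{\left(S\right)})}^{2}=\left\Vert h\right\Vert _{\mathscr{H}(K^{\left(S\right)})}^{2}$, yielding the first equality in (i).

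Because the argument reduces to a single reproducing-kernel identity, I do not expect a serious obstacle; the only points demanding care are bookkeeping ones, namely that $\delta_{x}\left(y\right)$ is the Kronecker symbol since $\delta_{x}$ denotes the Dirac function on $S$, that the reality of $K$ removes all conjugates, and that in (iii) one reads $\Delta P_{F}h$ as the $F$-indexed vector on which the finite matrix $K_{F}$ acts. The one external ingredient beyond \lemref{H4} is the identity $\sup_{F}P_{F}=I$ established in \corref{H5}, which is precisely what upgrades the finite-$F$ formula (ii) to the global norm formula (i).
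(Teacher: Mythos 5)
Your proposal is correct and follows essentially the same route as the paper: the paper likewise reduces everything to the identity (\ref{eq:H46}), which it verifies by pairing the explicit projection formula of \lemref{H4} with $\delta_{x}$ and using $\left\langle \delta_{x},K\left(\cdot,y\right)\right\rangle _{\mathscr{H}(K^{\left(S\right)})}=\delta_{x,y}$, and then disposes of (i) by citing \corref{H8} (whose content is exactly the combination of (\ref{eq:H13}) with $\sup_{F}P_{F}=I$ that you spell out). Your write-up is merely a bit more explicit about deriving (iii) by applying $K_{F}$ and about the bookkeeping between the Dirac function and the Kronecker symbol.
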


\begin{proof}
On account of \corref{H8}, we only need to verify (\ref{eq:H46}).
Let $F\in\mathscr{F}_{fin}\left(S\right)$, $h\in\mathscr{H}(K^{\left(S\right)})$,
then we proved that 
\begin{align}
\left(P_{F}h\right)\left(\cdot\right) & =\sum_{y\in F}\xi_{y}K\left(\cdot,y\right)\;\text{with}\label{eq:H47}\\
\xi_{y} & =\left(K_{F}^{-1}\left(h\big|_{F}\right)\right)\left(y\right).
\end{align}
Now apply $\left\langle \delta_{x},\cdot\right\rangle _{\mathscr{H}(K^{\left(S\right)})}$
to both sides in (\ref{eq:H47}); and we get
\begin{equation}
\left(\Delta\left(P_{F}h\right)\right)\left(x\right)=\xi_{x}\label{eq:H49}
\end{equation}
where we used $\left\langle \delta_{x},K\left(\cdot,y\right)\right\rangle _{\mathscr{H}(K^{\left(S\right)})}=\delta_{x,y}$.
The desired conclusion (\ref{eq:H46}) now follows from (\ref{eq:H49}).
Also note that $\left(\Delta\left(P_{F}h\right)\right)\left(x\right)=0$
if $x\in X\backslash F$.
\end{proof}

\subsection{Canonical isometries computed from point processes}

Below we consider p.d. kernels $K$ defined initially on $X\times X$.
Our present aim is to consider restrictions to $S\times S$ when $S$
is a suitable subset of $X$. Our first observation is the identification
of a canonical isometry $T_{S}$ between the respective reproducing
kernel Hilbert spaces; $T_{S}$ identifying $\mathscr{H}(K^{\left(S\right)})$
as an isometric subspace inside $\mathscr{H}(K)$. This isometry $T_{S}$
exists in general. However, we shall show that, when the subset $S$
is further restricted, the respective RKHSs, and isometry $T_{S}$
will admit explicit characterizations. For example, if $S$ is countable,
and is the Dirac functions $\delta_{s}$, $s\in S$, are in $\mathscr{H}(K^{\left(S\right)})$
we shall show that this setting leads to a point process. In this
case, we further identify an induced (infinite) graph with the set
$S$ as vertices, and with associated edges defined by an induced
$\delta_{s}$ kernel.
\begin{thm}
\label{thm:H14}Let $X\times X\xrightarrow{\;K\;}\mathbb{C}$ be a
p.d. kernel, and let $S\subset X$ be a subset. Set $K^{\left(S\right)}:=K\big|_{S\times S}$.
Let $\mathscr{H}\left(K\right)$, and $\mathscr{H}(K^{\left(S\right)})$,
be the respective RKHSs.
\begin{enumerate}
\item Then there is a canonical isometric embedding 
\[
\mathscr{H}(K^{\left(S\right)})\xrightarrow{\;T\;}\mathscr{H}\left(K\right),
\]
given by the following formula: For $s\in S$, set 
\begin{equation}
T(K^{\left(S\right)}\left(\cdot,s\right))=K\left(\cdot,s\right).\label{eq:H35}
\end{equation}
(Note that $K^{\left(S\right)}\left(\cdot,s\right)$ on the LHS in
(\ref{eq:H35}) is a function on $S$, while $K\left(\cdot,s\right)$
on the RHS is a function on $X$.)
\item \label{enu:H14-2}The adjoint operator $T^{*}$,
\begin{equation}
\mathscr{H}\left(K\right)\xrightarrow{\;T^{*}\;}\mathscr{H}(K^{\left(S\right)})
\end{equation}
is given by restriction, i.e., if $f\in\mathscr{H}(K)$, and $s\in S$,
then $\left(T^{*}f\right)\left(s\right)=f\left(s\right)$; or equivalently,
for all $f\in\mathscr{H}(K)$, 
\begin{equation}
T^{*}f=f\big|_{S}.\label{eq:H37}
\end{equation}
\end{enumerate}
\end{thm}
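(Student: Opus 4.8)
The plan is to construct $T$ first on the natural spanning set of kernel sections and then read off the adjoint from the reproducing property. For part (i), I would \emph{define} $T$ on the $\mathscr{H}(K^{(S)})$-dense span of $\{K^{(S)}(\cdot,s)\}_{s\in S}$ (dense by the construction in (\ref{eq:A2})) by the prescribed rule (\ref{eq:H35}), extended linearly, and then verify that it preserves inner products. The key computation is that for $s,t\in S$ both Gram entries reduce to the same scalar: using (\ref{eq:A3}) one has $\left\langle K^{(S)}(\cdot,s),K^{(S)}(\cdot,t)\right\rangle_{\mathscr{H}(K^{(S)})}=K^{(S)}(s,t)=K(s,t)$ (since $K^{(S)}=K\big|_{S\times S}$ and $s,t\in S$), while on the target side $\left\langle K(\cdot,s),K(\cdot,t)\right\rangle_{\mathscr{H}(K)}=K(s,t)$. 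Because the Gram data agree on generators, the map is simultaneously well-defined and isometric on the span, and by linearity together with $\mathscr{H}(K^{(S)})$-norm completion it extends to an isometric embedding $\mathscr{H}(K^{(S)})\to\mathscr{H}(K)$.

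For part (ii), I would compute $T^{*}$ directly from its defining relation $\left\langle Tg,f\right\rangle_{\mathscr{H}(K)}=\left\langle g,T^{*}f\right\rangle_{\mathscr{H}(K^{(S)})}$, testing against the generators $g=K^{(S)}(\cdot,s)$. On the left, by (\ref{eq:H35}) and the reproducing property (\ref{eq:A4}) in $\mathscr{H}(K)$, we get $\left\langle T(K^{(S)}(\cdot,s)),f\right\rangle_{\mathscr{H}(K)}=\left\langle K(\cdot,s),f\right\rangle_{\mathscr{H}(K)}=f(s)$; on the right, the reproducing property in $\mathscr{H}(K^{(S)})$ gives $\left\langle K^{(S)}(\cdot,s),T^{*}f\right\rangle_{\mathscr{H}(K^{(S)})}=(T^{*}f)(s)$. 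Equating yields $(T^{*}f)(s)=f(s)$ for every $s\in S$, which is exactly (\ref{eq:H37}). As a by-product, since $T^{*}$ is bounded, this shows that every $f\in\mathscr{H}(K)$ restricts to a function $f\big|_{S}\in\mathscr{H}(K^{(S)})$.

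I do not expect a serious obstacle: both parts hinge only on the structural identities (\ref{eq:A3}) and (\ref{eq:A4}) for reproducing kernels, together with the tautology $K^{(S)}(s,t)=K(s,t)$ for $s,t\in S$. The one point deserving a line of care is the \textbf{well-definedness} of $T$ on the span, since $T$ is prescribed only through its action on generators and not a priori as a function-space operation: one must observe that any finite relation $\sum_i\xi_i K^{(S)}(\cdot,s_i)=0$ in $\mathscr{H}(K^{(S)})$ has zero norm, hence by the norm-preservation established above its image $\sum_i\xi_i K(\cdot,s_i)$ also has zero norm in $\mathscr{H}(K)$ and vanishes. This is precisely what the matching of Gram data on generators supplies, so the verification is immediate once that identity is in hand.
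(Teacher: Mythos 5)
Your proposal is correct and follows essentially the same route as the paper: the isometry is verified by matching Gram data of the kernel sections via the tautology $K^{(S)}(s,t)=K(s,t)$, and the adjoint is read off by testing $\left\langle Tg,f\right\rangle =\left\langle g,T^{*}f\right\rangle$ against generators together with the reproducing property on both sides. Your explicit remark on well-definedness of $T$ on the span is a point the paper leaves implicit, but it does not change the argument.
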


\begin{proof}
To show that $T$ in (\ref{eq:H35}) is isometric, proceed as follows:
Let $\left\{ s_{i}\right\} _{i=1}^{N}$ be a finite subset of $S$,
and $\left\{ \xi_{i}\right\} _{i=1}^{N}\in\mathbb{C}^{N}$, then 
\begin{eqnarray*}
\left\Vert T(\sum\nolimits _{i}\xi_{i}K^{\left(S\right)}\left(\cdot,s_{i}\right))\right\Vert _{\mathscr{H}\left(K\right)}^{2} & = & \left\Vert \sum\nolimits _{i}\xi_{i}T(K^{\left(S\right)}\left(\cdot,s_{i}\right))\right\Vert _{\mathscr{H}\left(K\right)}^{2}\\
 & \underset{\text{by \ensuremath{\left(\ref{eq:H35}\right)}}}{=} & \left\Vert \sum\nolimits _{i}\xi_{i}K\left(\cdot,s_{i}\right)\right\Vert _{\mathscr{H}\left(K\right)}^{2}\\
 & \underset{\text{by \ensuremath{\left(\ref{eq:A3}\right)}}}{=} & \sum\nolimits _{i}\sum\nolimits _{j}\overline{\xi}_{i}\xi_{j}K\left(s_{i},s_{j}\right)\\
 & = & \sum\nolimits _{i}\sum\nolimits _{j}\overline{\xi}_{i}\xi_{j}K^{\left(S\right)}\left(s_{i},s_{j}\right)\\
 & = & \left\Vert \sum\nolimits _{i}\xi_{i}K^{\left(S\right)}\left(\cdot,s_{i}\right)\right\Vert _{\mathscr{H}\left(K^{\left(S\right)}\right)}^{2}
\end{eqnarray*}
which is the desired isometric property.

We now turn to (\ref{eq:H37}), the restriction formula: Let $s\in S$,
and $f\in\mathscr{H}\left(K\right)$, then 
\begin{eqnarray}
\left\langle T(K^{\left(S\right)}\left(\cdot,s\right)),f\right\rangle _{\mathscr{H}\left(K\right)} & = & \left\langle K^{\left(S\right)}\left(\cdot,s\right),T^{*}f\right\rangle _{\mathscr{H}\left(K^{\left(S\right)}\right)}\label{eq:H38}\\
 & \underset{\text{by \ensuremath{\left(\ref{eq:A4}\right)}}}{=} & \left(T^{*}f\right)\left(s\right).\nonumber 
\end{eqnarray}
But, for the LHS in (\ref{eq:H38}), we have 
\[
\left\langle T(K^{\left(S\right)}\left(\cdot,s\right)),f\right\rangle _{\mathscr{H}\left(K\right)}\underset{\text{by \ensuremath{\left(\ref{eq:H35}\right)}}}{=}\left\langle K\left(\cdot,s\right),f\right\rangle _{\mathscr{H}\left(K\right)}\underset{\text{by \ensuremath{\left(\ref{eq:A4}\right)}}}{=}f\left(s\right);
\]
and so the desired formula (\ref{eq:H37}) follows.
\end{proof}
\begin{rem}
\label{rem:H8}\textbf{The canonical isometry for \exaref{H8} ($\mathbb{Z}$-discretization
of the covariance function for Brownian motion on $\mathbb{R}$).}
From \thmref{H14}, we know that the canonical isometry $T$ maps
$\mathscr{H}(K^{\left(Z\right)})$ into $\mathscr{H}\left(K\right)$;
see (\ref{eq:H22}). But (\ref{eq:H19}) and (\ref{eq:H24-1}) in
the Example offer exact characterization of these two Hilbert spaces.
So, in the special case of \exaref{H8}, the canonical isometry $T$
maps from functions $\Phi$ on $\mathbb{Z}$ into functions on $\mathbb{R}$.
In view of (\ref{eq:H19}), this assignment turns out to be a precise
spline realization of the point grids realized by these sequences
$\Phi$.

Below we present an explicit formula, and graphics, for the spline
realizations. By (\ref{eq:H26-1}), the embedding of $\delta_{n}$
from $\mathscr{H}(K^{\left(\mathbb{Z}\right)})$ into $\mathscr{H}\left(K\right)$
is given by 
\[
\left(T\delta_{n}\right)\left(x\right)=2K\left(x,n\right)-K\left(x,n+1\right)-K\left(x,n-1\right),\;\forall x\in\mathbb{R}.
\]
See \figref{H1}. Therefore, for all $h\in\mathscr{H}\left(K\right)$,
we get 
\begin{align*}
\left(T^{*}h\right)\left(m\right) & =\sum_{n\in\mathbb{Z}}h\left(n\right)\delta_{n}\left(m\right),\;m\in\mathbb{Z},\;\text{and}\\
\left(TT^{*}h\right)\left(x\right) & =\sum_{n\in\mathbb{Z}}h\left(n\right)\left(2K\left(x,n\right)-K\left(x,n+1\right)-K\left(x,n-1\right)\right),\;x\in\mathbb{R}
\end{align*}
which is the spline interpolation.
\end{rem}

\begin{figure}
\includegraphics[width=0.6\columnwidth]{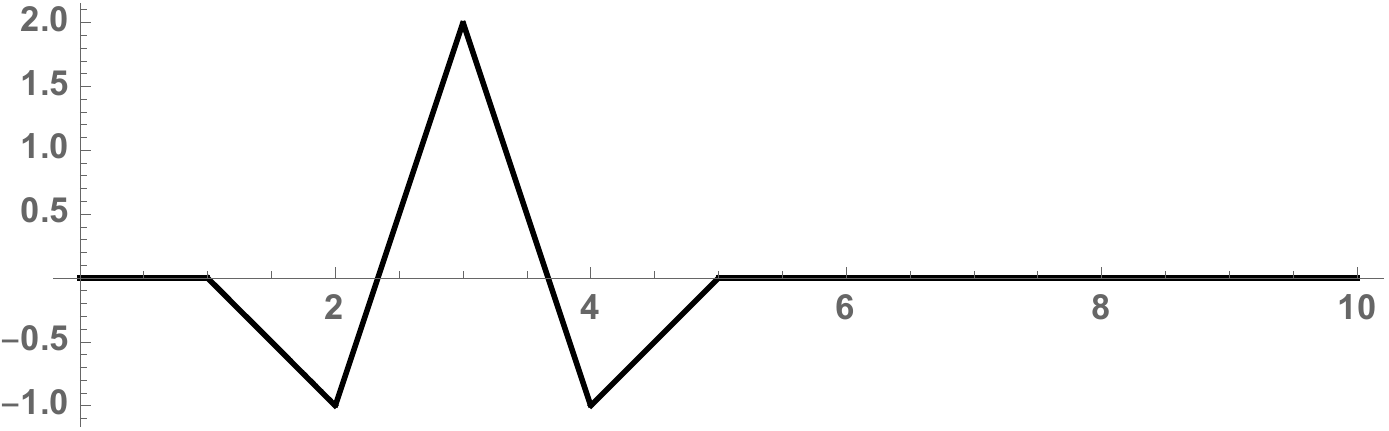}

\caption{\label{fig:H1}Isometric extrapolation from functions on $\mathbb{Z}$
to functions on $\mathbb{R}$. An illustration of the isometric embedding
of $\delta_{n}$ from $\mathscr{H}(K^{\left(\mathbb{Z}\right)})$
into $\mathscr{H}\left(K\right)$, with $n=3$.}
\end{figure}

\begin{cor}
Let $X\times X\xrightarrow{\;K\;}\mathbb{C}$ be a p.d. kernel, and
let $S\subset X$ be a subset. Assume further that $\left\{ \delta_{s}\right\} _{s\in S}\subset\mathscr{H}(K^{\left(S\right)})$.
Then every finitely supported function $h$ on $S$ is in $\mathscr{H}(K^{\left(S\right)})$,
and we have the following generalized spline interpolation; i.e.,
isometrically extending $h$ from $S$ to $X$:
\begin{equation}
\widetilde{h}\left(x\right)=\sup\nolimits _{F\supset F_{0}}\sum\nolimits _{y\in F}\left(K_{F}^{-1}h_{F}\right)\left(y\right)K\left(y,x\right),\;x\in X,
\end{equation}
where $F_{0}=suppt\left(h\right)$, and the sup is taken over the
filter of all finite subsets of $X$ containing $F_{0}$.
\end{cor}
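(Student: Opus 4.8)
The plan is to realize the desired extension $\widetilde h$ as the image of $h$ under the canonical isometry $T$ of \thmref{H14}, and then to make the formula explicit by applying $T$ to the finite-rank projections of $h$.

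First I would dispose of the membership claim. Since $\left\{\delta_{s}\right\}_{s\in S}\subset\mathscr{H}(K^{(S)})$ by hypothesis, and a finitely supported function $h$ on $S$ with $F_{0}=\operatorname{supp}(h)$ satisfies $h=\sum_{x\in F_{0}}h(x)\delta_{x}$ (an identity of functions on $S$), it is a finite linear combination of elements of $\mathscr{H}(K^{(S)})$ and hence lies in $\mathscr{H}(K^{(S)})$. This is the only place the standing hypothesis on the Dirac functions is used; the extension formula itself will hold for every $h\in\mathscr{H}(K^{(S)})$.

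Next, let $\mathscr{H}(K^{(S)})\xrightarrow{\;T\;}\mathscr{H}(K)$ be the canonical isometric embedding of \thmref{H14}, determined by $T(K^{(S)}(\cdot,s))=K(\cdot,s)$ and with adjoint $T^{*}f=f|_{S}$. I would \emph{define} the isometric extension by $\widetilde h:=Th$. Then $\left\Vert\widetilde h\right\Vert_{\mathscr{H}(K)}=\left\Vert h\right\Vert_{\mathscr{H}(K^{(S)})}$ because $T$ is isometric, and $\widetilde h|_{S}=T^{*}Th=h$ because $T^{*}T=I$; thus $\widetilde h$ genuinely extends $h$ from $S$ to $X$, and does so isometrically. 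It remains only to identify $\widetilde h(x)$ with the displayed expression. For this I run the finite subsets $F\subset S$ with $F\supset F_{0}$ through the projection machinery: applying \lemref{H4} to the kernel $K^{(S)}$ on $S$, the orthogonal projection $P_{F}$ of $\mathscr{H}(K^{(S)})$ onto $\operatorname{span}_{y\in F}\{K^{(S)}(\cdot,y)\}$ is $(P_{F}h)(\cdot)=\sum_{y\in F}(K_{F}^{-1}h_{F})(y)\,K^{(S)}(\cdot,y)$, where $h_{F}=h|_{F}$ and $K_{F}$ is the (identical) restriction matrix. Applying $T$ and using its defining relation $T(K^{(S)}(\cdot,y))=K(\cdot,y)$ gives
\[
\left(TP_{F}h\right)(x)=\sum_{y\in F}\left(K_{F}^{-1}h_{F}\right)(y)\,K(x,y),\qquad x\in X,
\]
which is exactly the summand in the statement. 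By \corref{H5} (equivalently the monotone form \corref{H8}) one has $\sup_{F}P_{F}=I$, so $P_{F}\to I$ strongly as $F$ runs over the filter of finite subsets containing $F_{0}$; since $T$ is isometric, $TP_{F}h\to Th=\widetilde h$ in $\mathscr{H}(K)$-norm. Finally, norm convergence in an RKHS forces pointwise convergence, because $(TP_{F}h)(x)=\left\langle K(\cdot,x),TP_{F}h\right\rangle_{\mathscr{H}(K)}\to\left\langle K(\cdot,x),\widetilde h\right\rangle_{\mathscr{H}(K)}=\widetilde h(x)$ by continuity of the inner product (the reproducing property underlying \lemref{B1}). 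Passing to the limit in the displayed identity yields the asserted formula for every $x\in X$.

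The step I expect to require the most care is the reading of the symbol $\sup$ in the statement: the function values $\sum_{y\in F}(K_{F}^{-1}h_{F})(y)K(x,y)$ are not themselves monotone in $F$, so ``$\sup$'' must be interpreted as the limit over the filter of finite sets $F\supset F_{0}$, running in parallel with the genuinely monotone norm convergence of \corref{H8}. The substantive content is therefore the strong convergence $P_{F}\to I$ together with the passage from $\mathscr{H}(K)$-norm convergence to pointwise convergence; once these are in place, the formula is immediate, and everything else is bookkeeping around the isometry $T$ and the projection identity of \lemref{H4}.
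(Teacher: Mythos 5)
Your proposal is correct and follows essentially the same route as the paper's own proof: both define $\widetilde{h}:=Th$ via the canonical isometry of \thmref{H14}, obtain membership from the finite Dirac decomposition $h=\sum_{s\in F_{0}}h(s)\delta_{s}$, and identify the displayed formula through the projection identity (\ref{eq:H8}) of \lemref{H4} together with the limit over the filter of finite sets. Your additional care in justifying the limit (strong convergence $P_{F}\to I$, plus the passage from $\mathscr{H}(K)$-norm convergence to pointwise convergence via the reproducing property) only makes explicit steps the paper leaves implicit.
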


\begin{proof}
Assume $h\in\mathscr{H}(K^{\left(S\right)})$, supported on a finite
subset $F_{0}\subset S$. Then, 
\begin{align*}
\widetilde{h}\left(x\right):=Th\left(x\right) & =T\left(\sum\nolimits _{s\in F_{0}}h\left(s\right)\delta_{s}\right)\left(x\right)\\
 & =\sum\nolimits _{s\in F_{0}}h\left(s\right)\left(T\delta_{s}\right)\left(x\right)\\
 & =\sum\nolimits _{s\in F_{0}}h\left(s\right)\sup\nolimits _{F\supset F_{0}}\left(P_{F}\delta_{s}\right)\left(x\right)\\
 & =\sup\nolimits _{F\supset F_{0}}P_{F}\left(\sum\nolimits _{s\in F_{0}}h\left(s\right)\delta_{s}\right)\left(x\right)\\
 & =\sup\nolimits _{F\supset F_{0}}\left(P_{F}h_{F_{0}}\right)\left(x\right)\\
 & =\sup\nolimits _{F\supset F_{0}}\sum\nolimits _{y\in F}\left(K_{F}^{-1}h_{F}\right)\left(y\right)K\left(x,y\right),
\end{align*}
where the last step follows from (\ref{eq:H8}), and $P_{F}$ is the
orthogonal projection from $\mathscr{H}\left(K\right)$ onto the subspace
$\mathscr{H}_{K}\left(F\right)$.
\end{proof}
\begin{cor}
\label{cor:H15}Let $X\times X\xrightarrow{\;K\;}\mathbb{C}$, p.d..
be given, and let $S\subset X$ be a subset. Let $T=T_{S}$, $\mathscr{H}(K^{\left(S\right)})\xrightarrow{\;T\;}\mathscr{H}\left(K\right)$,
be the canonical isometry. Then a function $f$ in $\mathscr{H}\left(K\right)$
satisfies $\left\langle f,T(\mathscr{H}(K^{\left(S\right)}))\right\rangle _{\mathscr{H}\left(K\right)}=0$
if and only if 
\begin{equation}
f\left(s\right)=0\;\text{for all \ensuremath{s\in S}.}\label{eq:H39}
\end{equation}
\end{cor}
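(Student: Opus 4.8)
The plan is to read the statement as the orthogonal-complement characterization of the range of the canonical isometry $T$ from \thmref{H14}, and then to collapse it to the reproducing property. First I would identify $T(\mathscr{H}(K^{(S)}))$ concretely. Since $T$ is isometric and the span of $\{K^{(S)}(\cdot,s)\}_{s\in S}$ is dense in $\mathscr{H}(K^{(S)})$, formula (\ref{eq:H35}) shows that $T$ carries this dense span onto the span of $\{K(\cdot,s)\}_{s\in S}$ inside $\mathscr{H}(K)$; passing to closures, $T(\mathscr{H}(K^{(S)}))$ is exactly the closed linear span of $\{K(\cdot,s)\}_{s\in S}$. Consequently $f\perp T(\mathscr{H}(K^{(S)}))$ holds if and only if $\langle f,K(\cdot,s)\rangle_{\mathscr{H}(K)}=0$ for every $s\in S$.

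Next I would invoke the reproducing property (\ref{eq:A4}), which gives $\langle K(\cdot,s),f\rangle_{\mathscr{H}(K)}=f(s)$ and hence, by conjugate symmetry of the inner product, $\langle f,K(\cdot,s)\rangle_{\mathscr{H}(K)}=\overline{f(s)}$. Combining this with the reduction of the first step, orthogonality of $f$ to the range of $T$ is equivalent to $\overline{f(s)}=0$ for all $s\in S$, i.e. to (\ref{eq:H39}). Since this is a chain of equivalences, both implications are obtained at once and no separate converse is needed. Equivalently, one could route the argument through the adjoint: from $\langle f,T\psi\rangle_{\mathscr{H}(K)}=\langle T^{*}f,\psi\rangle_{\mathscr{H}(K^{(S)})}$ for all $\psi$, the vector $f$ annihilates the range of $T$ precisely when $T^{*}f=0$ in $\mathscr{H}(K^{(S)})$, and by part (\ref{enu:H14-2}) of \thmref{H14} we have $T^{*}f=f\big|_{S}$, whose vanishing is the assertion (\ref{eq:H39}).

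There is no substantive obstacle here; the only point deserving a word of care is the identification of ``vanishing in the $\mathscr{H}(K^{(S)})$-norm'' with ``vanishing pointwise on $S$'', which I would justify by noting that in any RKHS the evaluation at a point equals the inner product against the kernel section, so a vector is zero in norm exactly when it vanishes at every point. Everything else is a direct unwinding of the explicit descriptions of $T$ and $T^{*}$ recorded in \thmref{H14}.
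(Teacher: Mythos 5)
Your proposal is correct and matches the paper's argument: the paper's proof is simply ``Immediate from part (\ref{enu:H14-2}) of \thmref{H14},'' i.e.\ the adjoint route you describe ($f\perp\operatorname{ran}T\iff T^{*}f=0\iff f\big|_{S}=0$), and your first route via the closed span of $\{K(\cdot,s)\}_{s\in S}$ and the reproducing property is just an unwound version of the same reasoning. No gaps.
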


\begin{proof}
Immediate from part (\ref{enu:H14-2}) in \thmref{H14}.
\end{proof}
\begin{rem}
Let $\left(X,K,S\right)$ be as in \corref{H15}, and let $T_{S}$
be the canonical isometry. Let $P_{S}:=T_{S}T_{S}^{*}$ be the corresponding
projection. Then $I_{\mathscr{H}\left(K\right)}-P_{S}$ is the projection
onto the subspace given in (\ref{eq:H39}).
\end{rem}

\begin{cor}
Let $X\times X\xrightarrow{\;K\;}\mathbb{C}$ be given p.d.; and let
$S\subset X$ be a subset with induced kernel 
\begin{equation}
K^{\left(S\right)}:=K\big|_{S\times S}.\label{eq:H40}
\end{equation}
Consider the two sets $\mathscr{F}\left(S\right)$ and $\mathscr{F}(K^{\left(S\right)})$
from (\ref{eq:D2}) and \thmref{E1}. Let $T_{S}:\mathscr{H}(K^{\left(S\right)})\rightarrow\mathscr{H}\left(K\right)$
be the canonical isometry (\ref{eq:H35}) in \thmref{H14}. Then the
following implication holds:
\begin{eqnarray}
\left(\left\{ k_{x}\right\} _{x\in X},\mu\right) & \in & \mathscr{F}\left(K\right)\label{eq:H41}\\
 & \Downarrow\nonumber \\
\left(\left\{ k_{s}\right\} _{s\in S},\mu\right) & \in & \mathscr{F}(K^{\left(S\right)})\label{eq:H42}
\end{eqnarray}
\end{cor}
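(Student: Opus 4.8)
The plan is to obtain the required factorization of $K^{(S)}$ by composing two isometries that are already at hand: the canonical embedding $T_{S}:\mathscr{H}(K^{(S)})\rightarrow\mathscr{H}\left(K\right)$ from \thmref{H14}, and the factorization isometry $T_{\mu}:\mathscr{H}\left(K\right)\rightarrow L^{2}\left(M,\mu\right)$ furnished by \lemref{F2} for the given pair $\left(\left\{ k_{x}\right\} _{x\in X},\mu\right)\in\mathscr{F}\left(K\right)$. First I would unwind the hypothesis: by the definition (\ref{eq:D2}), membership $\left(\left\{ k_{x}\right\} _{x\in X},\mu\right)\in\mathscr{F}\left(K\right)$ means exactly that $K\left(x,y\right)=\left\langle k_{x},k_{y}\right\rangle _{L^{2}\left(\mu\right)}$ for all $\left(x,y\right)\in X\times X$; equivalently, by \lemref{F2}, the assignment $K\left(\cdot,x\right)\mapsto k_{x}$ extends to the isometry $T_{\mu}$. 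Meanwhile \thmref{H14} supplies the isometric embedding $T_{S}$ determined by $T_{S}(K^{(S)}\left(\cdot,s\right))=K\left(\cdot,s\right)$ for $s\in S$.

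Next I would form the composition $T_{\mu}T_{S}:\mathscr{H}(K^{(S)})\rightarrow L^{2}\left(M,\mu\right)$. Being a composition of isometries it is again isometric, and on the generators $\left\{ K^{(S)}\left(\cdot,s\right)\right\} _{s\in S}$ it acts by $(T_{\mu}T_{S})(K^{(S)}\left(\cdot,s\right))=T_{\mu}(K\left(\cdot,s\right))=k_{s}$. By the definition (\ref{eq:D2}), read with $X$ replaced by $S$ and $K$ replaced by $K^{(S)}$, the existence of such an isometry carrying $K^{(S)}\left(\cdot,s\right)$ to $k_{s}$ is precisely the assertion that $\left(\left\{ k_{s}\right\} _{s\in S},\mu\right)\in\mathscr{F}(K^{(S)})$, which is the desired implication (\ref{eq:H41})$\Rightarrow$(\ref{eq:H42}).

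I do not anticipate a genuine obstacle here. At the level of the factorization identity the statement is merely the restriction of $K\left(x,y\right)=\int_{M}\overline{k_{x}}k_{y}\,d\mu$ from $X\times X$ to the subset $S\times S$, combined with $K^{(S)}=K\big|_{S\times S}$. The only point worth checking carefully is that restricting the indexing family $\left\{ k_{x}\right\} $ to $S$ does not alter the relevant $L^{2}\left(\mu\right)$-inner products, which is immediate since each $k_{s}$, $s\in S$, is literally the same element of $L^{2}\left(M,\mu\right)$ as before; the composition-of-isometries viewpoint simply records this cleanly and confirms that the induced map $K^{(S)}\left(\cdot,s\right)\mapsto k_{s}$ does extend to a bona fide isometry on all of $\mathscr{H}(K^{(S)})$, as (\ref{eq:D2}) demands.
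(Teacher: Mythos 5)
Your proposal is correct and follows essentially the same route as the paper: the paper likewise verifies, for $\left(s_{1},s_{2}\right)\in S\times S$, that $K^{\left(S\right)}\left(s_{1},s_{2}\right)=K\left(s_{1},s_{2}\right)=\int_{M}\overline{k}_{s_{1}}k_{s_{2}}\,d\mu$ by restricting the factorization identity, using the isometry $T_{S}$ to identify the generators. Your packaging of this as the composite isometry $T_{\mu}T_{S}$ is a clean reformulation of the same computation rather than a different argument.
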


\begin{proof}
Assuming (\ref{eq:H41}), we get the representation (\ref{eq:D2}):
\begin{equation}
K\left(x,y\right)=\int_{M}\overline{k}_{x}k_{y}d\mu,\;\forall\left(x,y\right)\in X\times X.
\end{equation}
But then, for all $\left(s_{1},s_{2}\right)\in S\times S$, we then
have 
\begin{eqnarray*}
K^{\left(S\right)}\left(s_{1},s_{2}\right) & = & \left\langle T_{S}(K^{\left(S\right)}\left(\cdot,s_{1}\right)),T_{S}(K^{\left(S\right)}\left(\cdot,s_{2}\right))\right\rangle _{\mathscr{H}\left(K\right)}\\
 & \underset{\text{by \ensuremath{\left(\ref{eq:H40}\right)}}}{=} & K\left(s_{1},s_{2}\right)\\
 & = & \int_{M}\overline{k}_{s_{1}}k_{s_{2}}d\mu,
\end{eqnarray*}
which is the desired conclusion.
\end{proof}

\section{Boundary value problems}

Our setting in the present section is the discrete case, i.e., RKHSs
of functions defined on a prescribed countable infinite discrete set
$S$. We are concerned with a characterization of those RKHSs $\mathscr{H}$
which contain the Dirac masses $\delta_{x}$ for all points $x\in S$.
Of the examples and applications where this question plays an important
role, we emphasize two: (i) discrete Brownian motion-Hilbert spaces,
i.e., discrete versions of the Cameron-Martin Hilbert space; (ii)
energy-Hilbert spaces corresponding to graph-Laplacians.

The problems addressed here are motivated in part by applications
to analysis on infinite weighted graphs, to stochastic processes,
and to numerical analysis (discrete approximations), and to applications
of RKHSs to machine learning. Readers are referred to the following
papers, and the references cited there, for details regarding this:
\cite{MR3231624,MR2966130,MR2793121,MR3286496,MR3246982,MR2862151,MR3096457,MR3049934,MR2579912,MR741527,MR3024465}.

The discrete case can be understood as restrictions of analogous PDE-models.
In traditional numerical analysis, one builds discrete and algorithmic
models (finite element methods), each aiming at finding approximate
solutions to PDE-boundary value problems. They typically use multiresolution-subdivision
schemes, applied to the continuous domain, subdividing into simpler
discretized parts, called finite elements. And with variational methods,
one then minimize various error-functions. In this paper, we turn
the tables: our object of study are the discrete models, and analysis
of suitable continuous PDE boundary problems serve as a tool for solutions
in the discrete world.
\begin{defn}
\label{def:dmp}Let $X\times X\xrightarrow{\;K\;}\mathbb{C}$ be a
given p.d. kernel on $X$. The RKHS $\mathscr{H}=\mathscr{H}\left(K\right)$
is said to have the \emph{discrete mass} property ($\mathscr{H}$
is called a \emph{discrete RKHS}), if $\delta_{x}\in\mathscr{H}$,
for all $x\in X$.
\end{defn}

In fact, it is known (\cite{MR3507188}) that every fundamental solution
for a Dirichlet boundary value problem on a bounded open domain $\Omega$
in $\mathbb{R}^{\nu}$, allows for discrete restrictions (i.e., vertices
sampled in $\Omega$), which have the desired ``discrete mass''
property.

We recall the following result to stress the distinction of the discrete
models vs their continuous counterparts.

Let $\Omega$ be a bounded, open, and connected domain in $\mathbb{R}^{\nu}$
with smooth boundary $\partial\Omega$. Let $K:\Omega\times\Omega\rightarrow\mathbb{R}$
continuous, p.d., given as the Green's function of $\Delta_{0}$,
where
\begin{equation}
\begin{split} & \Delta_{0}:=-\sum_{j=1}^{\nu}\left(\frac{\partial}{\partial x_{j}}\right)^{2},\\
 & dom\left(\Delta_{0}\right)=\left\{ f\in L^{2}\left(\Omega\right)\:\big|\:\Delta f\in L^{2}\left(\Omega\right),\;\mbox{and }f\big|_{\partial\Omega}\equiv0\right\} .
\end{split}
\label{eq:e4}
\end{equation}
for the Dirichlet boundary condition. Thus, $\Delta_{0}$ is positive
selfadjoint, and 
\begin{align}
 & \Delta_{0}K=\delta\left(x-y\right)\text{ on \ensuremath{\Omega\times\Omega}}\label{eq:m1}\\
 & K\left(x,\cdot\right)\big|_{\partial\Omega}\equiv0.\label{eq:m2}
\end{align}

Let $\mathscr{H}_{CM}\left(\Omega\right)$ be the corresponding Cameron-Martin
RKHS.

For $\nu=1$, $\Omega=\left(0,1\right)$, take
\begin{equation}
\begin{split}\mathscr{H}_{CM}\left(0,1\right)= & \Big\{ f\:\big|\:f'\in L^{2}\left(0,1\right),\;f\left(0\right)=f\left(1\right)=0,\\
 & \left\Vert f\right\Vert _{CM}^{2}:=\int_{0}^{1}\left|f'\right|^{2}dx<\infty\Big\}
\end{split}
\label{eq:m3}
\end{equation}

For $\nu>1$, let
\begin{equation}
\begin{split}\mathscr{H}_{CM}\left(\Omega\right)= & \left\{ f\:\big|\:\nabla f\in L^{2}\left(\Omega\right),\:f\big|_{\partial\Omega}\equiv0,\:\left\Vert f\right\Vert _{CM}^{2}:=\int_{\Omega}\left|\nabla f\right|^{2}dx<\infty\right\} ,\\
 & \text{ where }\nabla=\left(\frac{\partial}{\partial x_{1}},\frac{\partial}{\partial x_{2}},\cdots,\frac{\partial}{\partial x_{\nu}}\right).
\end{split}
\label{eq:m4}
\end{equation}

\begin{thm}
\label{thm:main}Let $\Omega$, and $S\subset\Omega$, be given. Then

\begin{enumerate}
\item Discrete case: Fix $S\subset\Omega$, $\#S=\aleph_{0}$, where $S=\left\{ x_{j}\right\} _{j=1}^{\infty}$,
$x_{j}\in\Omega$. Assume $\exists\varepsilon>0$ s.t. $\left\Vert x_{i}-x_{j}\right\Vert \geq\varepsilon$,
$\forall i,j$, $i\neq j$. Let 
\[
\mathscr{H}\left(S\right)=\text{RKHS of \ensuremath{K^{\left(S\right)}:=K\big|_{S\times S}}};
\]
then $\delta_{x_{j}}\in\mathscr{H}\left(S\right)$. 
\item Continuous case; by contrast: $K_{x}^{\left(S\right)}\in\mathscr{H}_{CM}\left(S\right)$,
but $\delta_{x}\notin\mathscr{H}_{CM}\left(\Omega\right)$, $x\in\Omega$.
\end{enumerate}
\end{thm}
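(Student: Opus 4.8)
The plan is to reduce both assertions to the a priori estimate of \lemref{B1}, applied to the kernel $K^{(S)}=K\big|_{S\times S}$, using the single structural fact that $K=G$ is the Green's function of $\Delta_{0}$, so that $\Delta_{0}K(\cdot,y)=\delta_{y}$ and $\langle u,v\rangle_{CM}=\int_{\Omega}\nabla u\cdot\nabla v\,dx$. The unifying observation is the following. For a finite $F\subset\Omega$ with $x\in F$ and coefficients $(\xi_{y})_{y\in F}$, set $g=\sum_{y\in F}\xi_{y}K(\cdot,y)\in\mathscr{H}_{CM}(\Omega)$, so that $\langle\xi,K_{F}\xi\rangle=\|g\|_{CM}^{2}$ by (\ref{eq:A3}) and $\Delta_{0}g=\sum_{y\in F}\xi_{y}\delta_{y}$. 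If $\phi\in C_{c}^{\infty}(\Omega)$ satisfies $\phi(x)=1$ and $\phi(y)=0$ for every $y\in F\setminus\{x\}$, then integrating by parts gives $\xi_{x}=\langle\Delta_{0}g,\phi\rangle=\langle g,\phi\rangle_{CM}$, whence by Cauchy--Schwarz $|\xi_{x}|^{2}\le\|\phi\|_{CM}^{2}\,\langle\xi,K_{F}\xi\rangle$. Thus membership of $\delta_{x}$ is governed entirely by whether one can produce such a separating test function $\phi$ whose energy $\|\phi\|_{CM}^{2}$ is bounded uniformly over the relevant finite sets $F$.

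First I would treat Part (1). Here $S=\{x_{j}\}$ is $\varepsilon$-separated, so for each fixed $j$ I can choose once and for all a bump $\phi_{j}\in C_{c}^{\infty}(\Omega)$ with $\phi_{j}(x_{j})=1$, supported in the ball $B(x_{j},r_{j})$ with $r_{j}=\tfrac{1}{2}\min(\varepsilon,\operatorname{dist}(x_{j},\partial\Omega))$; by separation this ball meets $S$ only at $x_{j}$, and it lies inside $\Omega$, so $\phi_{j}$ vanishes on all of $S\setminus\{x_{j}\}$, hence on every $F\setminus\{x_{j}\}$ with $F\subset S$. The displayed estimate then holds with the single constant $C_{x_{j}}=\|\phi_{j}\|_{CM}^{2}=\int_{\Omega}|\nabla\phi_{j}|^{2}\,dx<\infty$, uniformly in $F$. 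By \lemref{B1} this is exactly $\delta_{x_{j}}\in\mathscr{H}(K^{(S)})$. The fact $K^{(S)}(\cdot,x)\in\mathscr{H}(K^{(S)})$ asserted at the start of Part (2) is immediate, since kernel sections always lie in their own RKHS.

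For the negative assertion $\delta_{x}\notin\mathscr{H}_{CM}(\Omega)$ I would show that no uniform $C$ can work, i.e. that $(K_{F}^{-1})_{x,x}=\sup_{\xi}|\xi_{x}|^{2}/\langle\xi,K_{F}\xi\rangle$ can be made arbitrarily large, so $\sup_{F\ni x}(K_{F}^{-1})_{x,x}=\infty$ in the sense of \corref{H7}. I run the same identity in reverse with a second-difference (discrete-Laplacian) stencil: for small $h$ take $F_{h}=\{x\}\cup\{x\pm he_{k}\}_{k=1}^{\nu}$ and let $\xi^{(h)}$ carry the weights $\xi_{x}=-2\nu$, $\xi_{x\pm he_{k}}=1$. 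Then $g_{h}=\sum\xi^{(h)}_{y}K(\cdot,y)$ is a difference quotient of $K$ concentrating at $x$, and computing $\langle\xi^{(h)},K_{F_{h}}\xi^{(h)}\rangle=\|g_{h}\|_{CM}^{2}$ from the near-diagonal behaviour of $G$ shows it grows strictly more slowly than $|\xi^{(h)}_{x}|^{2}$ as $h\downarrow0$. In the model case $\nu=1$, $\Omega=(0,1)$, $K(s,t)=s\wedge t-st$, one finds that $g_{h}'$ equals $\mp1$ on the two length-$h$ intervals flanking $x$ and vanishes elsewhere, so $\|g_{h}\|_{CM}^{2}=2h$ while $|\xi^{(h)}_{x}|^{2}=4$, a ratio $2/h\to\infty$ that contradicts any a priori bound. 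This is precisely the mechanism the separation hypothesis suppresses in Part (1): when configuration points may cluster arbitrarily close to $x$, the only admissible separating function is the indicator $\chi_{\{x\}}$, which carries infinite Dirichlet energy.

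The main obstacle I anticipate is Part (2) in dimension $\nu\ge2$, where $G$ is singular on the diagonal, so the finite Gram matrices $K_{F}$ and the pointwise formalism of \corref{H7} do not apply verbatim. The cleanest rigorous route there is to phrase the failure through the dual norm, namely $|\mu(\{x\})|^{2}\le C\|\mu\|_{H^{-1}(\Omega)}^{2}$ for finitely supported $\mu=\sum\xi_{y}\delta_{y}$ (using $\langle\xi,K_{F}\xi\rangle=\|\mu\|_{H^{-1}}^{2}$), and to contradict it with mollified stencils matched to the order of the singularity of $G$; alternatively one simply invokes that $\mathscr{H}_{CM}(\Omega)=H_{0}^{1}(\Omega)$ does not embed into $C(\Omega)$ for $\nu\ge2$, so no finite-energy representative of $\delta_{x}$ exists. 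The one-dimensional computation above is the transparent model, and the higher-dimensional statement is then a matter of tracking the stencil scaling against the logarithmic or power singularity of the Green's function.
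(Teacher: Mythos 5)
Your proof is correct, and it is considerably more substantive than the paper's own: the paper disposes of \thmref{main} in two sentences, citing \corref{H7} and \corref{H8} for the criterion $\sup_{F\ni x}(K_{F}^{-1})_{x,x}<\infty$ and deferring the actual estimates to external references. You use the same criterion (equivalently, the a priori bound of \lemref{B1}) but actually verify it. The mechanism you supply --- writing $g=\sum_{y\in F}\xi_{y}K(\cdot,y)$, using $\Delta_{0}g=\sum_{y}\xi_{y}\delta_{y}$ to obtain $\xi_{x}=\langle g,\phi\rangle_{CM}$ for any smooth $\phi$ separating $x$ from $F\setminus\{x\}$, then Cauchy--Schwarz --- is exactly the right way to convert the $\varepsilon$-separation of $S$ into an $F$-uniform constant $C_{x_{j}}=\|\phi_{j}\|_{CM}^{2}$; and your second-difference stencil, with $\|g_{h}\|_{CM}^{2}=2h$ against $|\xi_{x}|^{2}=4$, is a clean witness for the failure of any such uniform bound in the continuum case. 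What your route buys over the paper's is a self-contained argument together with an explicit upper bound $\|\delta_{x_{j}}\|_{\mathscr{H}(K^{(S)})}^{2}\leq\inf_{\phi}\|\phi\|_{CM}^{2}$ over admissible separating bumps; what the paper's citation-based route buys is brevity and coverage of the variants treated in the cited references.

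One caveat worth recording: the identity $\langle\xi,K_{F}\xi\rangle=\|g\|_{CM}^{2}$ and the membership $g\in H_{0}^{1}(\Omega)$ require the kernel sections $K(\cdot,y)$ to have finite Dirichlet energy, which for the Green's function of $\Delta_{0}$ holds only for $\nu=1$; for $\nu\geq2$ the diagonal values $K(y,y)$ are infinite, so Part (1) --- not only Part (2), which you do flag --- is literally meaningful only under the paper's standing hypothesis that $K$ is continuous. Within that hypothesis your argument is complete, and your closing observation that $H_{0}^{1}(\Omega)$ does not embed into $C(\Omega)$ for $\nu\geq2$ is the correct disposal of Part (2) in higher dimensions.
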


\begin{proof}
The result follows from an application of Corollaries \ref{cor:H7}
and \ref{cor:H8}. It extends earlier results \cite{MR3450534,MR3507188}
by the co-authors.
\end{proof}

\section{Sampling in $\mathscr{H}\left(K\right)$}

In the present section, we study classes of reproducing kernels $K$
on general domains with the property that there are non-trivial restrictions
to countable discrete sample subsets $S$ such that every function
in $\mathscr{H}\left(K\right)$ has an $S$-sample representation.
In this general framework, we study properties of positive definite
kernels $K$ with respect to sampling from ``small\textquotedblright{}
subsets, and applying to all functions in the associated Hilbert space
$\mathscr{H}\left(K\right)$.

We are motivated by concrete kernels which are used in a number of
applications, for example, on one extreme, the Shannon kernel for
band-limited functions, which admits many sampling realizations; and
on the other, the covariance kernel of Brownian motion which has no
non-trivial countable discrete sample subsets.
\begin{defn}
\label{def:J1}Let $X\times X\xrightarrow{\;K\;}\mathbb{C}$ be a
p.d. kernel, and $\mathscr{H}\left(K\right)$ be the associated RKHS.
We say that $K$ has non-trivial sampling property, if there exists
a countable subset $S\subset X$, and $a,b\in\mathbb{R}_{+}$, such
that 
\begin{equation}
a\sum_{s\in S}\left|f\left(s\right)\right|^{2}\leq\left\Vert f\right\Vert _{\mathscr{H}\left(K\right)}^{2}\leq b\sum_{s\in S}\left|f\left(s\right)\right|^{2},\quad\forall f\in\mathscr{H}\left(K\right).\label{eq:sp1}
\end{equation}
If equality holds in (\ref{eq:sp1}) with $a=b=1$, then we say that
$\left\{ K\left(\cdot,s\right)\right\} _{s\in S}$ is a Parseval frame.
(Also see \defref{G1}.)
\end{defn}

It follows that sampling holds in the form
\[
f\left(x\right)=\sum_{s\in S}f\left(s\right)K\left(x,s\right),\quad\forall f\in\mathscr{H}\left(K\right),\:\forall x\in X
\]
if and only if $\left\{ K\left(\cdot,s\right)\right\} _{s\in S}$
is a Parseval frame.
\begin{lem}
\label{lem:fr}Suppose $K$, $X$, $a$, $b$, and $S$ satisfy the
condition in (\ref{eq:sp1}), then the linear span of $\left\{ K\left(\cdot,s\right)\right\} _{s\in S}$
is dense in $\mathscr{H}\left(K\right)$. Moreover, there is a positive
operator $B$ in $\mathscr{H}\left(K\right)$ with bounded inverse
such that 
\[
f\left(\cdot\right)=\sum_{s\in S}\left(Bf\right)\left(s\right)K\left(\cdot,s\right)
\]
is a convergent interpolation formula valid for all $f\in\mathscr{H}\left(K\right)$.

Equivalently, 
\[
f\left(x\right)=\sum_{s\in S}f\left(s\right)B\left(K\left(\cdot,s\right)\right)\left(x\right),\;\text{for all \ensuremath{x\in X}.}
\]
\end{lem}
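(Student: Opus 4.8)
The plan is to recognize the hypothesis (\ref{eq:sp1}) as the statement that $\left\{ K\left(\cdot,s\right)\right\} _{s\in S}$ is a frame for $\mathscr{H}\left(K\right)$, and then to read off both conclusions from the standard frame reconstruction, with $B$ taken to be the inverse frame operator. First I would rewrite the hypothesis using the reproducing property (\ref{eq:A4}): since $f\left(s\right)=\left\langle K\left(\cdot,s\right),f\right\rangle _{\mathscr{H}\left(K\right)}$, the chain (\ref{eq:sp1}) becomes $a\sum_{s\in S}\left|\left\langle K\left(\cdot,s\right),f\right\rangle \right|^{2}\leq\left\Vert f\right\Vert _{\mathscr{H}\left(K\right)}^{2}\leq b\sum_{s\in S}\left|\left\langle K\left(\cdot,s\right),f\right\rangle \right|^{2}$, i.e.\ the frame inequality with lower bound $1/b$ and upper bound $1/a$. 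Density of the span is then immediate from the lower bound: if $f\perp K\left(\cdot,s\right)$ for every $s\in S$, the left inequality forces $\left\Vert f\right\Vert _{\mathscr{H}\left(K\right)}=0$, so the orthogonal complement of the span is trivial.

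Next I would set up the analysis and synthesis operators. The upper bound in (\ref{eq:sp1}) says exactly that the analysis operator $C:\mathscr{H}\left(K\right)\to l^{2}\left(S\right)$, $Cf:=\left(f\left(s\right)\right)_{s\in S}$, is bounded; its adjoint is the synthesis operator $C^{*}:l^{2}\left(S\right)\to\mathscr{H}\left(K\right)$, $C^{*}\left(\left(c_{s}\right)_{s}\right)=\sum_{s\in S}c_{s}K\left(\cdot,s\right)$, the series converging unconditionally in $\mathscr{H}\left(K\right)$-norm. The frame operator $S_{\mathrm{fr}}:=C^{*}C$ then acts by $S_{\mathrm{fr}}f=\sum_{s\in S}f\left(s\right)K\left(\cdot,s\right)$, and with the convention (\ref{eq:A3}) one computes $\left\langle S_{\mathrm{fr}}f,f\right\rangle _{\mathscr{H}\left(K\right)}=\sum_{s\in S}\left|f\left(s\right)\right|^{2}$, so $S_{\mathrm{fr}}$ is bounded and positive. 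The two inequalities in (\ref{eq:sp1}) now translate into the operator bounds $\tfrac{1}{b}I\leq S_{\mathrm{fr}}\leq\tfrac{1}{a}I$, whence $S_{\mathrm{fr}}$ is boundedly invertible with $aI\leq S_{\mathrm{fr}}^{-1}\leq bI$.

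I would then take $B:=S_{\mathrm{fr}}^{-1}$, which is positive and has the bounded inverse $B^{-1}=S_{\mathrm{fr}}$ by the previous step. Writing $f=S_{\mathrm{fr}}\left(S_{\mathrm{fr}}^{-1}f\right)=S_{\mathrm{fr}}\left(Bf\right)$ and expanding through the formula for $S_{\mathrm{fr}}$ yields $f\left(\cdot\right)=\sum_{s\in S}\left(Bf\right)\left(s\right)K\left(\cdot,s\right)$, the first interpolation formula; writing instead $f=S_{\mathrm{fr}}^{-1}\left(S_{\mathrm{fr}}f\right)=B\left(\sum_{s\in S}f\left(s\right)K\left(\cdot,s\right)\right)=\sum_{s\in S}f\left(s\right)B\left(K\left(\cdot,s\right)\right)$, and evaluating at $x\in X$ via continuity of point evaluation, gives the equivalent pointwise formula $f\left(x\right)=\sum_{s\in S}f\left(s\right)B\left(K\left(\cdot,s\right)\right)\left(x\right)$.

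The only genuinely technical point is the norm convergence of the synthesis series, and this is precisely where the upper frame bound $b$ is used: the cleanest route is to identify $\sum_{s\in S}f\left(s\right)K\left(\cdot,s\right)$ as $C^{*}$ applied to $\left(f\left(s\right)\right)_{s}\in l^{2}\left(S\right)$, so that boundedness of $C^{*}$ delivers unconditional convergence with no separate estimate. Pointwise convergence of the two interpolation formulas is then automatic, since $g\mapsto g\left(x\right)=\left\langle K\left(\cdot,x\right),g\right\rangle _{\mathscr{H}\left(K\right)}$ is $\mathscr{H}\left(K\right)$-continuous. Everything else is bookkeeping with the inner-product convention (\ref{eq:A3}); none of the measure-theoretic subtleties of \thmref{E1} intervene here.
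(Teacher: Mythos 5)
Your argument is correct and is essentially the paper's own proof: the paper likewise defines the analysis operator $A f=(f(s))_{s\in S}$, its adjoint (synthesis) $A^{*}$, and sets $B=(A^{*}A)^{-1}$, which is exactly your inverse frame operator $S_{\mathrm{fr}}^{-1}$; your version just spells out the operator bounds $\tfrac{1}{b}I\leq A^{*}A\leq\tfrac{1}{a}I$ and the derivation of the two interpolation formulas, which the paper leaves implicit. One cosmetic remark: density follows from the \emph{right}-hand inequality of (\ref{eq:sp1}) (equivalently the lower frame bound $1/b$), so your phrase ``the left inequality'' should be read in the frame-inequality orientation rather than as written in (\ref{eq:sp1}).
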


\begin{proof}
Define $A:\mathscr{H}\left(K\right)\rightarrow l^{2}\left(S\right)$
by $\left(Af\right)\left(s\right)=f\left(s\right)$, $s\in S$. Then
the adjoint operator $A^{*}:l^{2}\left(S\right)\rightarrow\mathscr{H}\left(K\right)$
is given by $A^{*}\xi=\sum_{s\in S}\xi_{s}K\left(\cdot,s\right)$,
$\forall\xi\in l^{2}\left(S\right)$, and 
\[
A^{*}Af=\sum_{s\in S}f\left(s\right)K\left(\cdot,s\right)
\]
holds in $\mathscr{H}\left(K\right)$, with $\mathscr{H}\left(K\right)$-norm
convergence. Now set $B=\left(A^{*}A\right)^{-1}$, and note that
$\left\Vert B\right\Vert _{\mathscr{H}\left(K\right)\rightarrow\mathscr{H}\left(K\right)}\leq a^{-1}$,
where $a$ is in the lower bound in (\ref{eq:sp1}).
\end{proof}
\begin{thm}
\label{thm:ps}Let $K:X\times X\rightarrow\mathbb{R}$ be a p.d. kernel,
and let $S\subset X$ be a countable discrete subset. For all $s\in S$,
set $K_{s}\left(\cdot\right)=K\left(\cdot,s\right)$. Then TFAE:
\begin{enumerate}
\item \label{enu:ps1}The family $\left\{ K_{s}\right\} _{s\in S}$ is a
Parseval frame in $\mathscr{H}\left(K\right)$; 
\item \label{enu:ps2}
\[
\left\Vert f\right\Vert _{\mathscr{H}\left(K\right)}^{2}=\sum_{s\in S}\left|f\left(s\right)\right|^{2},\;\forall f\in\mathscr{H}\left(K\right);
\]
\item \label{enu:ps3}
\[
K\left(x,x\right)=\sum_{s\in S}\left|K\left(x,s\right)\right|^{2},\;\forall x\in X;
\]
\item \label{enu:ps4}
\[
f\left(x\right)=\sum_{s\in S}f\left(s\right)K\left(x,s\right),\;\forall f\in\mathscr{H}\left(K\right),\:\forall x\in X,
\]
where the sum converges in the norm of $\mathscr{H}\left(K\right)$.
\end{enumerate}
\end{thm}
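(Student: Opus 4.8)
The plan is to run everything through the single reproducing identity (\ref{eq:A4}): for every $f\in\mathscr{H}\left(K\right)$ and every $s\in S$ one has $\langle K_{s},f\rangle_{\mathscr{H}\left(K\right)}=f\left(s\right)$, so that (since $K$ is real valued and $S\subset X$) $\langle f,K_{s}\rangle_{\mathscr{H}\left(K\right)}=f\left(s\right)$ and $\left|\langle f,K_{s}\rangle_{\mathscr{H}\left(K\right)}\right|^{2}=\left|f\left(s\right)\right|^{2}$. With this, \textbf{(\ref{enu:ps1})$\Leftrightarrow$(\ref{enu:ps2})} is immediate: by \defref{J1} (equivalently \defref{G1}) the family $\left\{K_{s}\right\}_{s\in S}$ is a Parseval frame precisely when $\sum_{s\in S}\left|\langle f,K_{s}\rangle_{\mathscr{H}\left(K\right)}\right|^{2}=\left\Vert f\right\Vert_{\mathscr{H}\left(K\right)}^{2}$ for all $f$, and the reproducing identity rewrites the left-hand side as $\sum_{s\in S}\left|f\left(s\right)\right|^{2}$, which is exactly (\ref{enu:ps2}). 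At the same time I would record the two specializations of (\ref{eq:A4}) used below, namely $\left\Vert K_{x}\right\Vert_{\mathscr{H}\left(K\right)}^{2}=K\left(x,x\right)$ and $K_{x}\left(s\right)=K\left(x,s\right)$.

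Next I would close the triangle \textbf{(\ref{enu:ps2})$\Leftrightarrow$(\ref{enu:ps4})}, which needs only elementary frame reconstruction. For (\ref{enu:ps2})$\Rightarrow$(\ref{enu:ps4}): condition (\ref{enu:ps2}) says the analysis map $f\mapsto\left(f\left(s\right)\right)_{s\in S}$ is isometric into $\ell^{2}\left(S\right)$, hence its adjoint (the synthesis map $\left(\xi_{s}\right)\mapsto\sum_{s}\xi_{s}K_{s}$) satisfies $\sum_{s}\langle f,K_{s}\rangle_{\mathscr{H}\left(K\right)}K_{s}=f$ with $\mathscr{H}\left(K\right)$-norm convergence; substituting $\langle f,K_{s}\rangle_{\mathscr{H}\left(K\right)}=f\left(s\right)$ yields (\ref{enu:ps4}). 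For the converse (\ref{enu:ps4})$\Rightarrow$(\ref{enu:ps2}) I would pair (\ref{enu:ps4}) with $f$ and use continuity of the inner product:
\[
\left\Vert f\right\Vert_{\mathscr{H}\left(K\right)}^{2}=\Big\langle f,\sum_{s\in S}f\left(s\right)K_{s}\Big\rangle_{\mathscr{H}\left(K\right)}=\sum_{s\in S}f\left(s\right)\langle f,K_{s}\rangle_{\mathscr{H}\left(K\right)}=\sum_{s\in S}\left|f\left(s\right)\right|^{2}.
\]
Finally \textbf{(\ref{enu:ps2})$\Rightarrow$(\ref{enu:ps3})} is the one-line specialization $f=K_{x}$ in (\ref{enu:ps2}), using $\left\Vert K_{x}\right\Vert_{\mathscr{H}\left(K\right)}^{2}=K\left(x,x\right)$ and $K_{x}\left(s\right)=K\left(x,s\right)$; this is just the diagonal of the expansion in \lemref{G2}.

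The whole content therefore sits in the remaining implication \textbf{(\ref{enu:ps3})$\Rightarrow$(\ref{enu:ps2})}, and this is the step I expect to be the main obstacle. The natural device is the positive (Bessel) frame operator $S$, defined weakly on the dense span $\mathscr{D}=\operatorname{span}\left\{K_{x}:x\in X\right\}$ by
\[
\langle Sf,g\rangle_{\mathscr{H}\left(K\right)}=\sum_{s\in S}\langle f,K_{s}\rangle_{\mathscr{H}\left(K\right)}\,\langle K_{s},g\rangle_{\mathscr{H}\left(K\right)};
\]
then (\ref{enu:ps3}) is exactly the assertion $\langle SK_{x},K_{x}\rangle_{\mathscr{H}\left(K\right)}=\langle K_{x},K_{x}\rangle_{\mathscr{H}\left(K\right)}$ for every generator $K_{x}$, whereas (\ref{enu:ps2}) is the stronger $\langle Sf,f\rangle_{\mathscr{H}\left(K\right)}=\langle f,f\rangle_{\mathscr{H}\left(K\right)}$ for all $f\in\mathscr{D}$, i.e.\ $S=I$. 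The delicate point is that one may \emph{not} simply polarize: the vanishing of the quadratic form of $S-I$ on the spanning vectors $K_{x}$ does not by itself force it to vanish on combinations, since $K_{x}+K_{y}$ and $K_{x}-K_{y}$ are not of the form $K_{z}$. To bridge this I would aim to establish the sharp one-sided operator bound $0\le S\le I$ (equivalently the Bessel estimate $\sum_{s\in S}\left|f\left(s\right)\right|^{2}\le\left\Vert f\right\Vert_{\mathscr{H}\left(K\right)}^{2}$); once $I-S\ge0$ is known, $\langle(I-S)K_{x},K_{x}\rangle_{\mathscr{H}\left(K\right)}=0$ gives $(I-S)^{1/2}K_{x}=0$, hence $(I-S)K_{x}=0$ for all $x$, and $S=I$ follows by density. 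Extracting this sharp bound from the diagonal identity (\ref{enu:ps3}) alone is precisely where I expect the argument to require genuine care — and where an additional input (an a priori Bessel bound, or a monotone control of the finite truncations $S_{F}$ in the spirit of Corollaries \ref{cor:H5} and \ref{cor:H8}) is most likely needed to complete the implication.
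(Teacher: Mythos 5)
Your arguments for (i)$\Leftrightarrow$(ii)$\Leftrightarrow$(iv) and for (ii)$\Rightarrow$(iii) are correct, and they are exactly the analysis/synthesis computations that the paper's one-line proof (``follows the steps in the proof of \lemref{G2}; details left to the reader'') presumably intends; note, however, that \lemref{G2} applied to $g_{s}=K_{s}$ only yields the direction (i)$\Rightarrow$(iii), so the paper offers no argument at all for the step you single out as problematic.

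Your suspicion about (iii)$\Rightarrow$(ii) is not a mere technicality: that implication is \emph{false} as stated, so no amount of care will close the gap without an added hypothesis. Take $X=S=\left\{ 1,2\right\} $ and
\[
K=\begin{pmatrix}4/5 & 2/5\\
2/5 & 4/5
\end{pmatrix},
\]
which is positive definite with eigenvalues $6/5$ and $2/5$. Then $K\left(x,x\right)=4/5=\left(4/5\right)^{2}+\left(2/5\right)^{2}=\sum_{s\in S}\left|K\left(x,s\right)\right|^{2}$ for both $x$, so (iii) holds; but for $f=K_{1}-K_{2}$ one computes $\left\Vert f\right\Vert _{\mathscr{H}\left(K\right)}^{2}=4/5$ while $\sum_{s}\left|f\left(s\right)\right|^{2}=8/25$, so (ii) fails, and hence so do (i) and (iv). (Equivalently: the Gram matrix of $\left\{ K_{1},K_{2}\right\} $ is $K$ itself, whose eigenvalues are not in $\left\{ 0,1\right\} $, so the family cannot be Parseval.) This confirms concretely that the quadratic identity on the generators $K_{x}$ cannot be polarized. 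The repair is exactly the one you propose: under the additional Bessel hypothesis $\sum_{s\in S}\left|f\left(s\right)\right|^{2}\leq\left\Vert f\right\Vert _{\mathscr{H}\left(K\right)}^{2}$ (upper frame bound $1$, as in \defref{J1}), the frame operator satisfies $0\leq S\leq I$, condition (iii) forces $(I-S)^{1/2}K_{x}=0$ for every $x$, and $S=I$ follows from density of $\mathrm{span}\left\{ K_{x}\right\} $. So either that hypothesis must be added to (iii), or (iii) must be demoted from an equivalence to a necessary condition; as the theorem stands, (iii) is strictly weaker than the other three.
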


\begin{proof}
The proof is simple, and follows the steps in the proof of \lemref{G2}.
Details are left to the reader.
\end{proof}
We now turn to dichotomy: Existence of countably discrete sampling
sets vs non-existence.
\begin{example}
\label{exa:shan}Let $X=\mathbb{R}$, and let $K:\mathbb{R}\times\mathbb{R}\rightarrow\mathbb{R}$
be the Shannon kernel, where 
\begin{align}
K\left(x,y\right) & :=\text{sinc}\,\pi\left(x-y\right)\nonumber \\
 & =\frac{\sin\pi\left(x-y\right)}{\pi\left(x-y\right)},\quad\forall x,y\in\mathbb{R}.\label{eq:sp5}
\end{align}

We may choose $S=\mathbb{Z}$, and then $\left\{ K\left(\cdot,n\right)\right\} _{n\in\mathbb{Z}}$
is even an orthonormal basis (ONB) in $\mathscr{H}\left(K\right)$,
but there are many other examples of countable discrete subsets $S\subset\mathbb{R}$
such that (\ref{eq:sp1}) holds for finite $a,b\in\mathbb{R}_{+}$. 

The RKHS $\mathscr{H}\left(K\right)$ in (\ref{eq:sp5}) is the Hilbert
space $\subset L^{2}\left(\mathbb{R}\right)$ consisting of all $f\in L^{2}\left(\mathbb{R}\right)$
such that $suppt(\hat{f})\subset\left[-\pi,\pi\right]$, where ``suppt''
stands for support of the Fourier transform $\hat{f}$. Note $\mathscr{H}\left(K\right)$
consists of functions on $\mathbb{R}$ which have entire analytic
extensions to $\mathbb{C}$. Using the above observations, we get
\begin{align*}
f\left(x\right) & =\sum_{n\in\mathbb{Z}}f\left(n\right)K\left(x,n\right)\\
 & =\sum_{n\in\mathbb{Z}}f\left(n\right)\text{sinc}\,\pi\left(x-n\right),\quad\forall x\in\mathbb{R},\:\forall f\in\mathscr{H}\left(K\right).
\end{align*}
\end{example}

\begin{example}
Let $K$ be the covariant kernel of standard Brownian motion, with
$X:=[0,\infty)$ or $[0,1)$, and 
\begin{equation}
K\left(x,y\right):=x\wedge y=\min\left(x,y\right),\;\forall\left(x,y\right)\in X\times X.\label{eq:sp6}
\end{equation}
\end{example}

\begin{thm}
\label{thm:bm}Let $K$, $X$ be as in (\ref{eq:sp6}); then there
is no countable discrete subset $S\subset X$ such that $\left\{ K\left(\cdot,s\right)\right\} _{s\in S}$
is dense in $\mathscr{H}\left(K\right)$. 
\end{thm}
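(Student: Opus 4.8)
The plan is to convert the density question into one about the orthogonal complement of $\{K(\cdot,s)\}_{s\in S}$ in $\mathscr{H}(K)$ and then to exhibit a nonzero element of that complement whenever $S$ is countable and discrete. By elementary Hilbert-space theory, $\{K(\cdot,s)\}_{s\in S}$ fails to be dense in $\mathscr{H}(K)$ exactly when there is a nonzero $\Phi\in\mathscr{H}(K)$ with $\langle K(\cdot,s),\Phi\rangle_{\mathscr{H}(K)}=0$ for all $s\in S$. By the reproducing property (\ref{eq:A4}) this inner product is just $\Phi(s)$, so the complement is nontrivial if and only if some nonzero $\Phi\in\mathscr{H}(K)$ vanishes on all of $S$. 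Using the Cameron--Martin description of this RKHS recorded in \exaref{F1} (see (\ref{eq:F6}), justified via \lemref{B1}), such a $\Phi$ is precisely an absolutely continuous function on $X$ with $\Phi(0)=0$ and $\Phi'\in L^{2}(X)$ that is not constant. Thus the theorem reduces to producing one such $\Phi$ vanishing on $S$.

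The crux is that discreteness of $S$ forces an open interval of $X$ that misses $S$ entirely. A discrete set cannot be dense in any subinterval: if some $s\in S$ lay in an interval on which $S$ were dense, then isolation of $s$ would supply a one-sided subinterval containing no further point of $S$, contradicting density. Hence $\overline{S}$ has empty interior, so $X\setminus\overline{S}$ is open and dense in $X$; in particular I can choose $0<a<b$ with $(a,b)\subset X$ and $(a,b)\cap S=\emptyset$.

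On this interval I would place a single bump: let $\Phi$ be a continuous function (for instance a piecewise-linear tent, or a smooth bump) supported in $[a,b]$, equal to $0$ off $(a,b)$ and strictly positive at the midpoint. Then $\Phi(0)=0$ because $a>0$, the derivative $\Phi'$ is bounded with compact support and hence lies in $L^{2}(X)$, and $\Phi$ is not constant, so $\Phi$ is a nonzero vector of $\mathscr{H}(K)$. Since every $s\in S$ lies outside $(a,b)$, where $\Phi$ vanishes, we get $\Phi(s)=0$ for all $s\in S$. This furnishes the required nonzero element of the orthogonal complement, so $\{K(\cdot,s)\}_{s\in S}$ is not dense, for every countable discrete $S$.

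The substantive content is the reformulation in the first step; the remaining construction is elementary. The one point deserving care is the claim that a countable discrete set leaves an open interval uncovered, equivalently that it is nowhere dense. This is exactly where discreteness, rather than mere countability, is essential: a dense countable set such as the rationals would instead make the span dense (the differences $K(\cdot,a)-K(\cdot,b)$ recover indicators of intervals with endpoints in $S$, whose span is dense in $L^{2}$ under the isometry $\Phi\mapsto\Phi'$), so the argument must genuinely use isolation of the points of $S$.
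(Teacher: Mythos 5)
Your proposal is correct, and its skeleton matches the paper's: both arguments reduce density of $\{K(\cdot,s)\}_{s\in S}$ to the non-existence of a nonzero $\Phi\in\mathscr{H}(K)$ vanishing on $S$ (via the reproducing property (\ref{eq:A4})), and both then use the Cameron--Martin description (\ref{eq:F6}) to build such a $\Phi$. Where you differ is in the witness function. The paper enumerates $S$ as an increasing sequence $0<x_1<x_2<\cdots$ and builds a global sawtooth that is nonzero on \emph{every} gap $(x_n,x_{n+1})$, which forces the summability condition (\ref{eq:sp11}) on the slopes $c_n$; you instead observe that a discrete set is nowhere dense, pick a single interval $(a,b)$ with $a>0$ missed by $S$, and place one compactly supported bump there. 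Your route is more economical (no slope bookkeeping) and, notably, more general: a countable discrete subset of $[0,\infty)$ need not admit an increasing enumeration (e.g.\ $S=\{1/n\}_{n\in\mathbb{N}}$ accumulates at $0$), so the paper's proof as written only treats locally finite $S$, whereas your nowhere-density argument --- which you correctly flag as the step where discreteness, not countability, is used, and which you justify adequately via isolation of each $s\in S$ --- covers every discrete $S$, indeed every non-dense $S$. What the paper's construction buys in exchange is reuse: the sawtooth with prescribed slopes feeds directly into the subsequent interpolation corollary (conditions (\ref{eq:ip1})--(\ref{eq:sp2})), which a single bump does not.
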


\begin{proof}
Suppose $S=\left\{ x_{n}\right\} $, where 
\begin{equation}
0<x_{1}<x_{2}<\cdots<x_{n}<x_{n+1}<\cdots;\label{eq:sp7}
\end{equation}
then consider the following function
\begin{equation}
\raisebox{-6mm}{\includegraphics[width=0.7\textwidth]{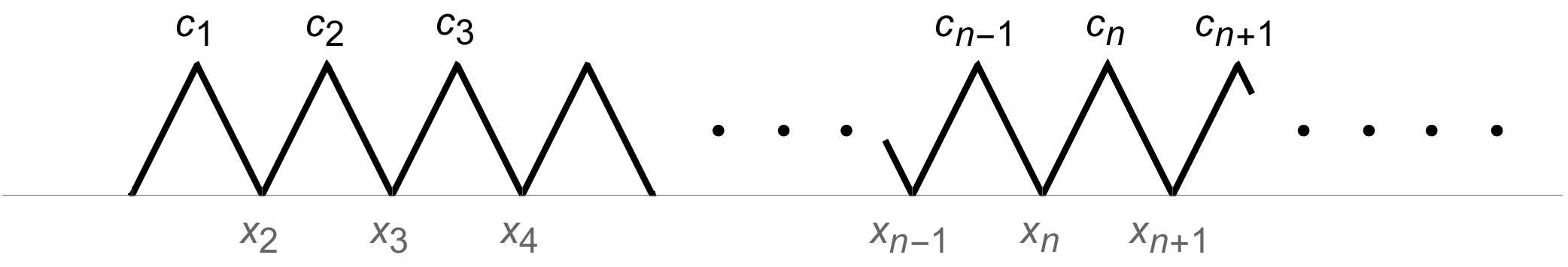}}\label{eq:sp9}
\end{equation}

On the respective intervals $\left[x_{n},x_{n+1}\right]$, the function
$f$ is as follows:
\[
f\left(x\right)=\begin{cases}
c_{n}\left(x-x_{n}\right) & \text{if }x_{n}\leq x\leq\frac{x_{n}+x_{n+1}}{2}\\
c_{n}\left(x_{n+1}-x\right) & \text{if }\frac{x_{n}+x_{n+1}}{2}<x\leq x_{n+1}.
\end{cases}
\]
In particular, $f\left(x_{n}\right)=f\left(x_{n+1}\right)=0$, and
on the midpoints: 
\[
f\left(\frac{x_{n}+x_{n+1}}{2}\right)=c_{n}\frac{x_{n+1}-x_{n}}{2},
\]
see \figref{stooth}.
\begin{figure}[H]
\includegraphics[width=0.35\textwidth]{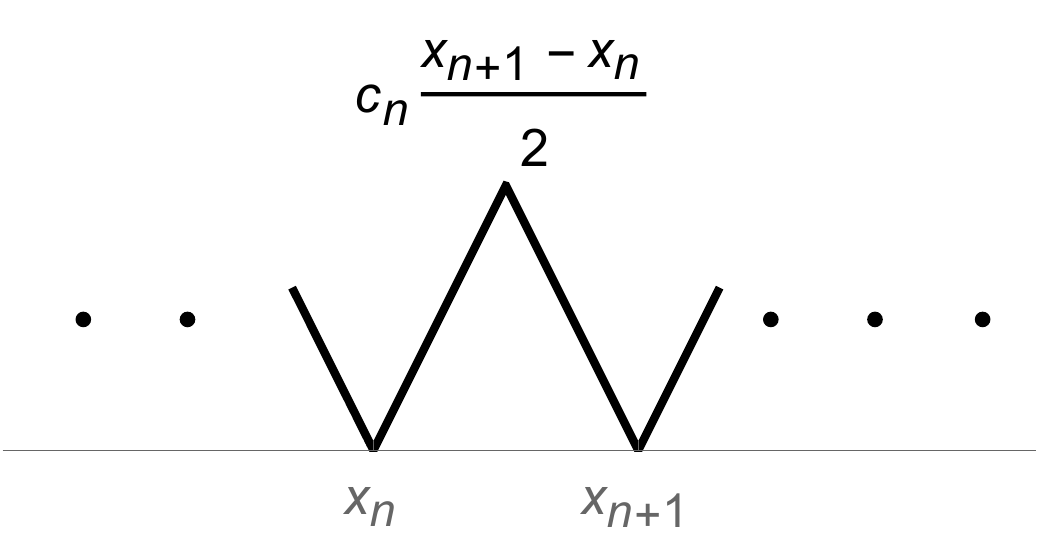}

\caption{\label{fig:stooth}The saw-tooth function.}
\end{figure}

Choose $\left\{ c_{n}\right\} _{n\in\mathbb{N}}$ such that 
\begin{equation}
\sum_{n\in\mathbb{N}}\left|c_{n}\right|^{2}\left(x_{n+1}-x_{n}\right)<\infty.\label{eq:sp11}
\end{equation}
Admissible choices for the slope-values $c_{n}$ include 
\[
c_{n}=\frac{1}{n\sqrt{x_{n+1}-x_{n}}},\;n\in\mathbb{N}.
\]

We will now show that $f\in\mathscr{H}\left(K\right)$. For the distribution
derivative computed from (\ref{eq:sp9}), we get 
\begin{equation}
\raisebox{-12mm}{\includegraphics[width=0.7\textwidth]{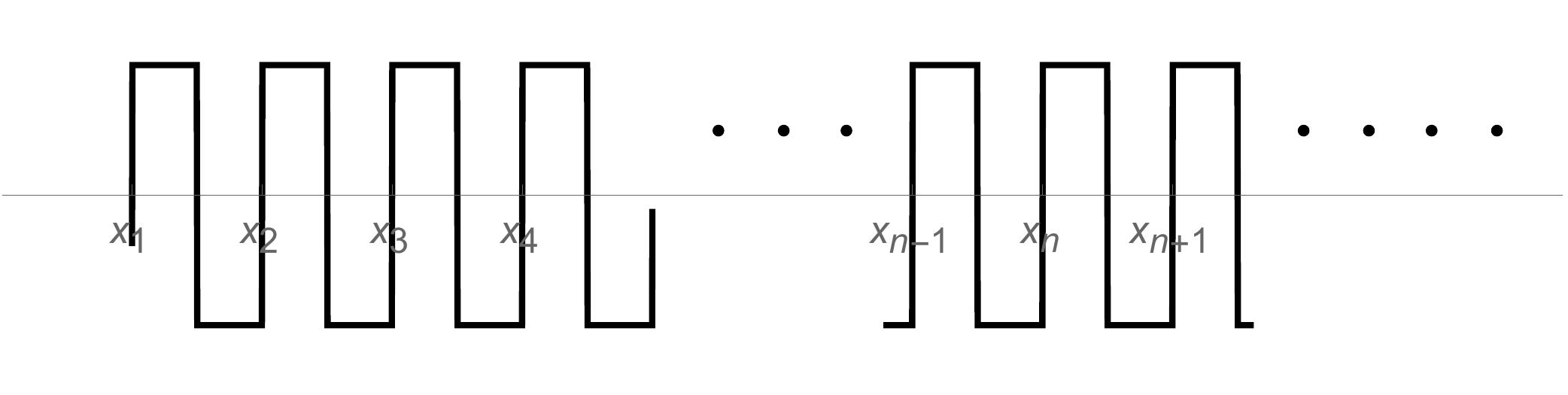}}\label{eq:sp9b}
\end{equation}
\[
\int_{0}^{\infty}\left|f'\left(x\right)\right|^{2}dx=\sum_{n\in\mathbb{N}}\left|c_{n}\right|^{2}\left(x_{n+1}-x_{n}\right)<\infty
\]
which is the desired conclusion, see (\ref{eq:sp9}).
\end{proof}
\begin{cor}
For the kernel $K\left(x,y\right)=x\wedge y$ in (\ref{eq:sp6}),
$X=[0,\infty)$, the following holds:

Given $\left\{ x_{j}\right\} _{j\in\mathbb{N}}\subset\mathbb{R}_{+}$,
$\left\{ y_{j}\right\} _{j\in\mathbb{N}}\subset\mathbb{R}$, then
the interpolation problem 
\begin{equation}
f\left(x_{j}\right)=y_{j},\;f\in\mathscr{H}\left(K\right)\label{eq:ip1}
\end{equation}
is solvable if 
\begin{equation}
\sum_{j\in\mathbb{N}}\left(y_{j+1}-y_{j}\right)^{2}/\left(x_{j+1}-x_{j}\right)<\infty.\label{eq:sp2}
\end{equation}
\end{cor}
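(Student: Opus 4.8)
The plan is to produce an explicit piecewise-linear interpolant and to certify that it has finite energy precisely under the hypothesis \eqref{eq:sp2}, using the Cameron--Martin description of $\mathscr{H}\left(K\right)$. Recall from \exaref{H8} (specifically \eqref{eq:H19}, which carries \exaref{F1} over to $X=[0,\infty)$ by way of \lemref{D3} and \lemref{B1}) that, for $K\left(x,y\right)=x\wedge y$, a function $f$ lies in $\mathscr{H}\left(K\right)$ if and only if $f\left(0\right)=0$ and the distribution derivative $f'$ belongs to $L^{2}\left(0,\infty\right)$, in which case $\left\Vert f\right\Vert_{\mathscr{H}\left(K\right)}^{2}=\int_{0}^{\infty}\left|f'\right|^{2}dx$. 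As in \eqref{eq:sp7}, the sample points are assumed ordered, $0<x_{1}<x_{2}<\cdots$, which is what makes the denominators in \eqref{eq:sp2} positive.

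First I would define $f$ on $\left[0,x_{1}\right]$ to be the affine segment from $\left(0,0\right)$ to $\left(x_{1},y_{1}\right)$, and on each $\left[x_{j},x_{j+1}\right]$ the affine segment from $\left(x_{j},y_{j}\right)$ to $\left(x_{j+1},y_{j+1}\right)$. By construction $f\left(0\right)=0$ and $f\left(x_{j}\right)=y_{j}$ for every $j$, so the interpolation conditions \eqref{eq:ip1} hold automatically. On the interior of $\left[x_{j},x_{j+1}\right]$ the derivative is the constant $\left(y_{j+1}-y_{j}\right)/\left(x_{j+1}-x_{j}\right)$, whence
\[
\int_{x_{j}}^{x_{j+1}}\left|f'\right|^{2}dx=\frac{\left(y_{j+1}-y_{j}\right)^{2}}{x_{j+1}-x_{j}},
\]
and similarly $\int_{0}^{x_{1}}\left|f'\right|^{2}dx=y_{1}^{2}/x_{1}$. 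Summing gives $\int_{0}^{\infty}\left|f'\right|^{2}dx=\frac{y_{1}^{2}}{x_{1}}+\sum_{j}\frac{\left(y_{j+1}-y_{j}\right)^{2}}{x_{j+1}-x_{j}}$, which is finite by \eqref{eq:sp2}, the single extra term $y_{1}^{2}/x_{1}$ being harmless.

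The only point requiring care is the behavior beyond the sample set, i.e.\ whether the recipe above actually defines $f$ on all of $\left[0,\infty\right)$. If $x_{j}\to\infty$ there is nothing more to do. If instead $x_{j}\to L<\infty$, then $\sum_{j}\left(x_{j+1}-x_{j}\right)=L-x_{1}<\infty$, and Cauchy--Schwarz combined with \eqref{eq:sp2} yields $\sum_{j}\left|y_{j+1}-y_{j}\right|\leq\big(\sum_{j}\left(y_{j+1}-y_{j}\right)^{2}/\left(x_{j+1}-x_{j}\right)\big)^{1/2}\big(\sum_{j}\left(x_{j+1}-x_{j}\right)\big)^{1/2}<\infty$; hence $y_{j}$ converges to some $y_{\infty}$, and I extend $f$ by the constant $y_{\infty}$ on $\left[L,\infty\right)$, contributing nothing to the energy. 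In either case $f'\in L^{2}\left(0,\infty\right)$ and $f\left(0\right)=0$, so $f\in\mathscr{H}\left(K\right)$ by the characterization above and $f$ solves \eqref{eq:ip1}. This is the same energy computation already performed for the saw-tooth function in the proof of \thmref{bm}, now specialized to an interpolant that attains the prescribed heights $y_{j}$ rather than returning to zero; the \emph{only} genuine obstacle is the endpoint/convergence bookkeeping of this last paragraph.
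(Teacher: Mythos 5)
Your proposal is correct and follows essentially the same route as the paper: take the piecewise-linear spline interpolant and verify membership in $\mathscr{H}\left(K\right)$ by computing $\int_{0}^{\infty}\left|f'\right|^{2}dx=\sum_{j}\left(y_{j+1}-y_{j}\right)^{2}/\left(x_{j+1}-x_{j}\right)$, finite under \eqref{eq:sp2}. Your additional bookkeeping --- the initial segment contributing $y_{1}^{2}/x_{1}$ and the constant extension past a finite accumulation point of the $x_{j}$ --- fills in details the paper's one-line proof leaves implicit, but does not change the argument.
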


\begin{proof}
Let $f$ be the piecewise linear spline (see \figref{ip}) for the
problem (\ref{eq:ip1}), see \figref{ip}; then the $\mathscr{H}\left(K\right)$-norm
is as follows: 
\[
\int_{0}^{\infty}\left|f'\left(x\right)\right|^{2}dx=\sum_{j\in\mathbb{N}}\left(\frac{y_{j+1}-y_{j}}{x_{j+1}-x_{j}}\right)^{2}\left(x_{j+1}-x_{j}\right)<\infty
\]
when (\ref{eq:sp2}) holds. 
\end{proof}
\begin{figure}[H]
\includegraphics[width=0.35\textwidth]{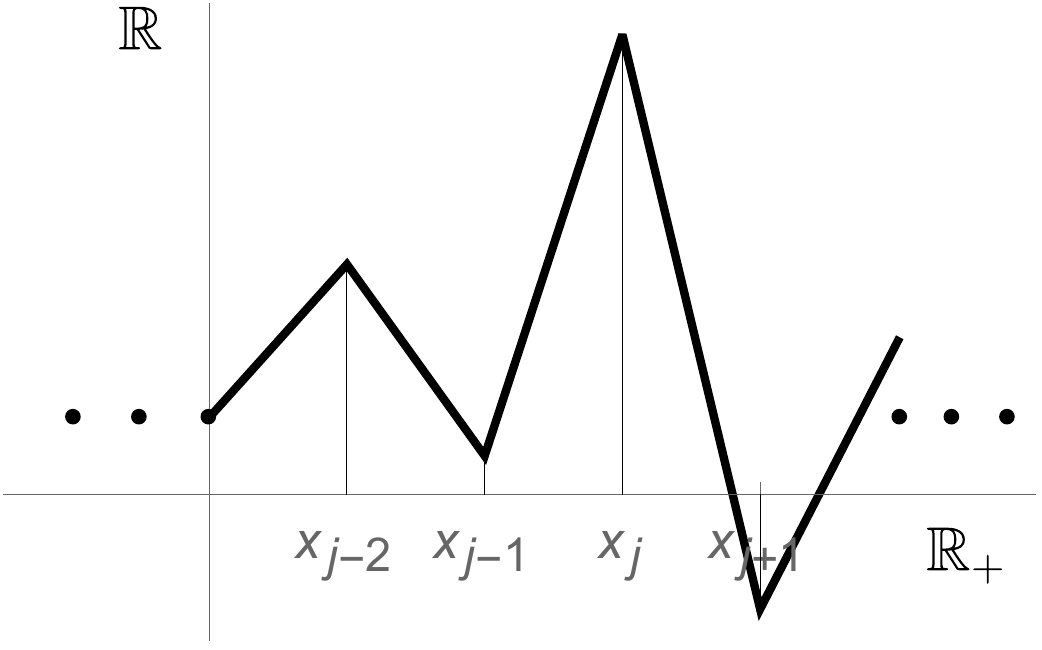}

\caption{\label{fig:ip}Piecewise linear spline.}
\end{figure}

\begin{rem}
Let $K$ be as in (\ref{eq:sp6}), $X=[0,\infty)$. For all $0\leq x_{j}<x_{j+1}<\infty$,
let 
\begin{align*}
f_{j}\left(x\right): & =\frac{2}{x_{j+1}-x_{j}}\left(K\left(x-x_{j},\frac{x_{j+1}-x_{j}}{2}\right)-K\left(x-\frac{x_{j}+x_{j+1}}{2},\frac{x_{j+1}-x_{j}}{2}\right)\right)\\
 & =\raisebox{-5mm}{\includegraphics[width=0.4\textwidth]{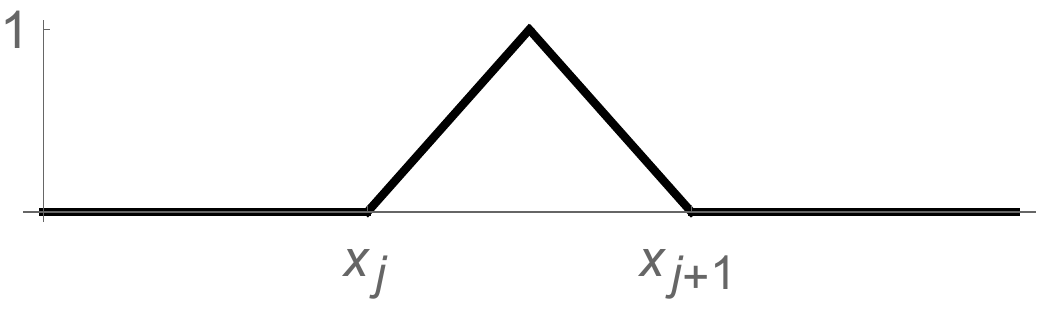}}
\end{align*}
Assuming (\ref{eq:sp11}) holds, then 
\[
f\left(x\right)=\sum_{j}c_{j}f_{j}\left(x\right)\in\mathscr{H}\left(K\right).
\]
\end{rem}

\begin{thm}
Let $X$ be a set of cardinality $c$ of the continuum, and let $K:X\times X\rightarrow\mathbb{R}$
be a positive definite kernel. Let $S=\left\{ x_{j}\right\} _{j\in\mathbb{N}}$
be a discrete subset of $X$. Suppose there are weights $\left\{ w_{j}\right\} _{j\in\mathbb{N}}$,
$w_{j}\in\mathbb{R}_{+}$, such that 
\begin{equation}
\left(f\left(x_{j}\right)\right)\in l^{2}\left(\mathbb{N},w\right)\label{eq:c1}
\end{equation}
for all $f\in\mathscr{H}\left(K\right)$. Suppose further that there
is a point $t_{0}\in X\backslash S$, a $y_{0}\in\mathbb{R}\backslash\left\{ 0\right\} $,
and $\alpha\in\mathbb{R}_{+}$ such that the infimum 
\begin{equation}
\inf_{f\in\mathscr{H}\left(K\right)}\left\{ \sum\nolimits _{j}w_{j}\left|f\left(x_{j}\right)\right|^{2}+\left|f\left(t_{0}\right)-y_{0}\right|^{2}+\alpha\left\Vert f\right\Vert _{\mathscr{H}\left(K\right)}^{2}\right\} \label{eq:c2}
\end{equation}
is strictly positive. 

Then $S$ is \uline{not} a interpolation set for $\left(K,X\right)$. 
\end{thm}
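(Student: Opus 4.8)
The plan is to argue by contradiction: suppose $S$ \emph{is} an interpolation set for $(K,X)$, i.e.\ the sampling (restriction) map $R\colon\mathscr{H}(K)\to\ell^2(\mathbb{N},w)$, $Rf=(f(x_j))_j$, is surjective, and then show that this rigidity in the samples is incompatible with the strict positivity of the infimum in \eqref{eq:c2}. First I would record that $R$ is a \emph{bounded} operator: by the reproducing property $f(x_j)=\langle f,K(\cdot,x_j)\rangle_{\mathscr{H}(K)}$, so each coordinate of $Rf$ is $\mathscr{H}(K)$-continuous; since by hypothesis \eqref{eq:c1} the image lands in $\ell^2(\mathbb{N},w)$, the closed graph theorem yields boundedness. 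A one-line computation then identifies the adjoint as $R^*c=\sum_j w_j c_j\,K(\cdot,x_j)$, whence $\overline{\mathrm{range}\,R^*}=\overline{\mathrm{span}}\{K(\cdot,x_j)\}_j=(\ker R)^{\perp}$.

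Next I would evaluate the Tikhonov-type infimum in \eqref{eq:c2} in closed form. Writing $u:=K(\cdot,t_0)$ and $A:=R^*R+\alpha I$ (invertible since $\alpha>0$), and letting $u\otimes u$ denote the rank-one map $g\mapsto g(t_0)\,u$, the objective equals $\langle (A+u\otimes u)f,f\rangle-2y_0\langle f,u\rangle+y_0^2$. Its unique minimizer is given by a Sherman--Morrison (rank-one) update of $A^{-1}$, and the minimal value reduces to
\[
\inf_{f\in\mathscr{H}(K)}\Big\{\sum_j w_j|f(x_j)|^2+|f(t_0)-y_0|^2+\alpha\|f\|_{\mathscr{H}(K)}^2\Big\}=\frac{y_0^2}{1+\big\langle (R^*R+\alpha I)^{-1}u,\,u\big\rangle_{\mathscr{H}(K)}}.
\]
Thus the infimum is controlled entirely by the quadratic form $\beta:=\langle (R^*R+\alpha I)^{-1}u,u\rangle$, and the hypothesis that it be strictly positive (uniformly as the regularization is relaxed) is exactly the statement that $\beta$ stays finite, i.e.\ that point evaluation at $t_0$ is a \emph{bounded linear functional of the samples}: $u=K(\cdot,t_0)\in\mathrm{range}(R^*)$, so that $f(t_0)=\sum_j w_j c_j f(x_j)$ for one fixed $c\in\ell^2(\mathbb{N},w)$ and \emph{every} $f\in\mathscr{H}(K)$.

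The crux, and the main obstacle, is to convert this analytic dichotomy into the structural assertion about interpolation. The positivity hypothesis pins the value $f(t_0)$ rigidly to the sampled values $(f(x_j))$; but if $S$ were an interpolation set, the open mapping theorem would make $R$ open and $\mathrm{range}(R^*)$ closed, so the sample functionals $\{f\mapsto f(x_j)\}$ would form a Riesz sequence that can be prescribed freely, and I would then separate $u=K(\cdot,t_0)$ from $\overline{\mathrm{span}}\{K(\cdot,x_j)\}_j$ to build, for each $\varepsilon>0$, a test function $f_\varepsilon$ with $\sum_j w_j|f_\varepsilon(x_j)|^2<\varepsilon$, $f_\varepsilon(t_0)=y_0$, and $\alpha\|f_\varepsilon\|^2<\varepsilon$ — driving the infimum to $0$ and contradicting \eqref{eq:c2}. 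Here the cardinality assumption $|X|=c$ guarantees that a free node $t_0\in X\setminus S$ is available for this separation, since $S$ is only countable. The delicate point is precisely this construction: producing functions that are simultaneously small on \emph{all} of $S$ in the weighted $\ell^2$-norm, normalized at $t_0$, and of controlled $\mathscr{H}(K)$-norm, which forces one to handle the distinction between $\mathrm{range}(R^*)$ and its closure and to track the regularizer $\alpha$ in the limit; I expect this reconciliation of the variational identity above with the geometric (Riesz-sequence) consequences of surjectivity of $R$ to be where the real work lies.
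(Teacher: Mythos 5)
There is a genuine gap, and your own computation exposes it. You correctly evaluate the Tikhonov functional in closed form as $y_0^2/(1+\beta)$ with $\beta=\langle(R^*R+\alpha I)^{-1}u,u\rangle_{\mathscr{H}(K)}$ and $u=K(\cdot,t_0)$. But for the \emph{fixed} $\alpha>0$ that the theorem posits, $R^*R+\alpha I\geq\alpha I$ gives $\beta\leq\|u\|_{\mathscr{H}(K)}^2/\alpha=K(t_0,t_0)/\alpha<\infty$ unconditionally, so the infimum in (\ref{eq:c2}) is strictly positive for \emph{every} kernel $K$, every countable $S$ satisfying (\ref{eq:c1}), and every choice of $t_0$, $y_0\neq0$, $\alpha>0$. (More elementarily: $|f(t_0)|\leq\|f\|_{\mathscr{H}(K)}\sqrt{K(t_0,t_0)}$ forces either $|f(t_0)-y_0|^2\geq y_0^2/4$ or $\alpha\|f\|_{\mathscr{H}(K)}^2\geq\alpha y_0^2/(4K(t_0,t_0))$.) Consequently the hypothesis, as you have set things up, carries no information about $S$, and your planned contradiction --- constructing $f_\varepsilon$ with $f_\varepsilon(t_0)=y_0$ and $\alpha\|f_\varepsilon\|_{\mathscr{H}(K)}^2<\varepsilon$ --- is literally impossible, since $|y_0|\leq\|f_\varepsilon\|_{\mathscr{H}(K)}\sqrt{K(t_0,t_0)}$ bounds $\alpha\|f_\varepsilon\|_{\mathscr{H}(K)}^2$ away from $0$. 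You tacitly repair this by reading the hypothesis as ``strictly positive uniformly as the regularization is relaxed,'' i.e. as a statement about $\alpha\to0$; that is a different assertion from the one you are asked to prove. Even granting that reading, the decisive step --- producing functions small in the weighted sample norm, normalized at $t_0$, of controlled $\mathscr{H}(K)$-norm, and showing this is incompatible with surjectivity of $R$ --- is explicitly deferred (``where the real work lies''), so no contradiction is actually derived. Note also that $u\in\overline{\mathrm{span}}\{K(\cdot,x_j)\}_{j}$ is entirely possible (the Shannon kernel with $S=\mathbb{Z}$, where $\{K(\cdot,n)\}$ is an orthonormal basis), so the ``separation'' of $u$ from that closed span which your construction needs is not available in general.

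For completeness: the paper itself supplies no argument either; its proof of this theorem is a one-line pointer to Lemma \ref{lem:fr} and Theorem \ref{thm:ps} together with an external citation, and it never defines ``interpolation set.'' So you are not missing a trick present in the text. But as written, your proposal rests on a silent reinterpretation of the positivity hypothesis --- one that your own closed-form identity shows is forced if the hypothesis is to be non-vacuous --- and it leaves the essential construction unproved. A correct write-up would have to fix the meaning of ``interpolation set,'' restate the hypothesis in an $\alpha$-uniform (or $\alpha\to0$) form, and then actually carry out the closed-range argument you only sketch.
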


\begin{proof}
This results follows from \lemref{fr} and \thmref{ps} above. We
also refer readers to \cite{MR3670916}.
\end{proof}
\begin{acknowledgement*}
The co-authors thank the following colleagues for helpful and enlightening
discussions: Professors Daniel Alpay, Sergii Bezuglyi, Ilwoo Cho,
Myung-Sin Song, Wayne Polyzou, and members in the Math Physics seminar
at The University of Iowa.
\end{acknowledgement*}
\bibliographystyle{amsalpha}
\bibliography{ref}

\newcommand{\etalchar}[1]{$^{#1}$}
\providecommand{\bysame}{\leavevmode\hbox to3em{\hrulefill}\thinspace}
\providecommand{\MR}{\relax\ifhmode\unskip\space\fi MR }
\providecommand{\MRhref}[2]{%
  \href{http://www.ams.org/mathscinet-getitem?mr=#1}{#2}
}
\providecommand{\href}[2]{#2}
\begin{thebibliography}{SNFBK10}

\bibitem[AB97]{MR1473250}
D.~Alpay and V.~Bolotnikov, \emph{On tangential interpolation in reproducing
  kernel {H}ilbert modules and applications}, Topics in interpolation theory
  ({L}eipzig, 1994), Oper. Theory Adv. Appl., vol.~95, Birkh\"{a}user, Basel,
  1997, pp.~37--68. \MR{1473250}

\bibitem[ABK02]{MR1873434}
Daniel Alpay, Vladimir Bolotnikov, and H.~Turgay Kaptano\u{g}lu, \emph{The
  {S}chur algorithm and reproducing kernel {H}ilbert spaces in the ball},
  Linear Algebra Appl. \textbf{342} (2002), 163--186. \MR{1873434}

\bibitem[AD93]{MR1200633}
Daniel Alpay and Harry Dym, \emph{On a new class of structured reproducing
  kernel spaces}, J. Funct. Anal. \textbf{111} (1993), no.~1, 1--28.
  \MR{1200633}

\bibitem[AD06]{MR2223568}
D.~Alpay and C.~Dubi, \emph{Some remarks on the smoothing problem in a
  reproducing kernel {H}ilbert space}, J. Anal. Appl. \textbf{4} (2006), no.~2,
  119--132. \MR{2223568}

\bibitem[ADD90]{MR1120274}
Daniel Alpay, Patrick Dewilde, and Harry Dym, \emph{Lossless inverse scattering
  and reproducing kernels for upper triangular operators}, Extension and
  interpolation of linear operators and matrix functions, Oper. Theory Adv.
  Appl., vol.~47, Birkh\"{a}user, Basel, 1990, pp.~61--135. \MR{1120274}

\bibitem[ADRdS01]{MR1821907}
D.~Alpay, A.~Dijksma, J.~Rovnyak, and H.~S.~V. de~Snoo, \emph{Realization and
  factorization in reproducing kernel {P}ontryagin spaces}, Operator theory,
  system theory and related topics ({B}eer-{S}heva/{R}ehovot, 1997), Oper.
  Theory Adv. Appl., vol. 123, Birkh\"{a}user, Basel, 2001, pp.~43--65.
  \MR{1821907}

\bibitem[AJ12]{MR2966130}
Daniel Alpay and Palle E.~T. Jorgensen, \emph{Stochastic processes induced by
  singular operators}, Numer. Funct. Anal. Optim. \textbf{33} (2012), no.~7-9,
  708--735. \MR{2966130}

\bibitem[AJL11]{MR2793121}
Daniel Alpay, Palle Jorgensen, and David Levanony, \emph{A class of {G}aussian
  processes with fractional spectral measures}, J. Funct. Anal. \textbf{261}
  (2011), no.~2, 507--541. \MR{2793121}

\bibitem[AJL17]{MR3687240}
\bysame, \emph{On the equivalence of probability spaces}, J. Theoret. Probab.
  \textbf{30} (2017), no.~3, 813--841. \MR{3687240}

\bibitem[AJS14]{MR3231624}
Daniel Alpay, Palle Jorgensen, and Guy Salomon, \emph{On free stochastic
  processes and their derivatives}, Stochastic Process. Appl. \textbf{124}
  (2014), no.~10, 3392--3411. \MR{3231624}

\bibitem[AL08]{MR2373103}
Daniel Alpay and David Levanony, \emph{On the reproducing kernel {H}ilbert
  spaces associated with the fractional and bi-fractional {B}rownian motions},
  Potential Anal. \textbf{28} (2008), no.~2, 163--184. \MR{2373103}

\bibitem[AM03]{MR1986785}
D.~Alpay and T.~M. Mills, \emph{A family of {H}ilbert spaces which are not
  reproducing kernel {H}ilbert spaces}, J. Anal. Appl. \textbf{1} (2003),
  no.~2, 107--111. \MR{1986785}

\bibitem[Aro50]{MR0051437}
N.~Aronszajn, \emph{Theory of reproducing kernels}, Trans. Amer. Math. Soc.
  \textbf{68} (1950), 337--404. \MR{0051437}

\bibitem[ARS08]{MR2419381}
N.~Arcozzi, R.~Rochberg, and E.~Sawyer, \emph{Carleson measures for the
  {D}rury-{A}rveson {H}ardy space and other {B}esov-{S}obolev spaces on complex
  balls}, Adv. Math. \textbf{218} (2008), no.~4, 1107--1180. \MR{2419381}

\bibitem[ARS10]{MR2648865}
Nicola Arcozzi, Richard Rochberg, and Eric Sawyer, \emph{Two variations on the
  {D}rury-{A}rveson space}, Hilbert spaces of analytic functions, CRM Proc.
  Lecture Notes, vol.~51, Amer. Math. Soc., Providence, RI, 2010, pp.~41--58.
  \MR{2648865}

\bibitem[Arv98]{MR1668582}
William Arveson, \emph{Subalgebras of {$C^*$}-algebras. {III}. {M}ultivariable
  operator theory}, Acta Math. \textbf{181} (1998), no.~2, 159--228.
  \MR{1668582}

\bibitem[BBCO17]{MR3700114}
Radu Balan, Matthew Begu\'{e}, Chae Clark, and Kasso Okoudjou,
  \emph{Optimization methods for frame conditioning and application to graph
  {L}aplacian scaling}, Frames and other bases in abstract and function spaces,
  Appl. Numer. Harmon. Anal., Birkh\"{a}user/Springer, Cham, 2017, pp.~27--45.
  \MR{3700114}

\bibitem[CH18]{MR3854505}
Xiaodan Chen and Yaoping Hou, \emph{A sharp lower bound on the least signless
  {L}aplacian eigenvalue of a graph}, Bull. Malays. Math. Sci. Soc. \textbf{41}
  (2018), no.~4, 2011--2018. \MR{3854505}

\bibitem[CM13]{MR3121682}
B.~Currey and A.~Mayeli, \emph{The orthonormal dilation property for abstract
  {P}arseval wavelet frames}, Canad. Math. Bull. \textbf{56} (2013), no.~4,
  729--736. \MR{3121682}

\bibitem[CXY15]{MR3290453}
Xiao Chang, Hao Xu, and Shing-Tung Yau, \emph{Spanning trees and random walks
  on weighted graphs}, Pacific J. Math. \textbf{273} (2015), no.~1, 241--255.
  \MR{3290453}

\bibitem[DG13]{MR3096457}
Marta D'Elia and Max Gunzburger, \emph{The fractional {L}aplacian operator on
  bounded domains as a special case of the nonlocal diffusion operator},
  Comput. Math. Appl. \textbf{66} (2013), no.~7, 1245--1260. \MR{3096457}

\bibitem[FPWW14]{MR3275625}
D.~Freeman, D.~Poore, A.~R. Wei, and M.~Wyse, \emph{Moving {P}arseval frames
  for vector bundles}, Houston J. Math. \textbf{40} (2014), no.~3, 817--832.
  \MR{3275625}

\bibitem[GD18]{MR3861681}
Arkaprabha Ghosal and Prasenjit Deb, \emph{Quantum walks over a square
  lattice}, Phys. Rev. A \textbf{98} (2018), no.~3, 032104, 8. \MR{3861681}

\bibitem[Her12]{MR2982692}
Sa'ar Hersonsky, \emph{Boundary value problems on planar graphs and flat
  surfaces with integer cone singularities, {I}: {T}he {D}irichlet problem}, J.
  Reine Angew. Math. \textbf{670} (2012), 65--92. \MR{2982692}

\bibitem[HHT13]{MR3034493}
Randall~R. Holmes, Huajun Huang, and Tin-Yau Tam, \emph{Asymptotic behavior of
  {I}wasawa and {C}holesky iterations}, Linear Algebra Appl. \textbf{438}
  (2013), no.~10, 3755--3768. \MR{3034493}

\bibitem[Hid71]{MR0301806}
Takeyuki Hida, \emph{Quadratic functionals of {B}rownian motion}, J.
  Multivariate Anal. \textbf{1} (1971), no.~1, 58--69. \MR{0301806}

\bibitem[Hid80]{MR562914}
\bysame, \emph{Brownian motion}, Applications of Mathematics, vol.~11,
  Springer-Verlag, New York-Berlin, 1980, Translated from the Japanese by the
  author and T. P. Speed. \MR{562914}

\bibitem[Hid92]{MR1176778}
\bysame, \emph{Stochastic variational calculus}, Stochastic partial
  differential equations and their applications ({C}harlotte, {NC}, 1991),
  Lect. Notes Control Inf. Sci., vol. 176, Springer, Berlin, 1992,
  pp.~123--134. \MR{1176778}

\bibitem[HJL{\etalchar{+}}13]{MR3009685}
Deguang Han, Wu~Jing, David Larson, Pengtong Li, and Ram~N. Mohapatra,
  \emph{Dilation of dual frame pairs in {H}ilbert {$C^*$}-modules}, Results
  Math. \textbf{63} (2013), no.~1-2, 241--250. \MR{3009685}

\bibitem[HKL{\etalchar{+}}14]{HKL14}
S.~{Haeseler}, M.~{Keller}, D.~{Lenz}, J.~{Masamune}, and M.~{Schmidt},
  \emph{{Global properties of Dirichlet forms in terms of Green's formula}},
  ArXiv e-prints (2014).

\bibitem[HKLW07]{MR2367342}
Deguang Han, Keri Kornelson, David Larson, and Eric Weber, \emph{Frames for
  undergraduates}, Student Mathematical Library, vol.~40, American Mathematical
  Society, Providence, RI, 2007. \MR{2367342}

\bibitem[IM65]{IM65}
Kiyoshi It{\^o} and Henry~P. McKean, Jr., \emph{Diffusion processes and their
  sample paths}, Die Grundlehren der Mathematischen Wissenschaften, Band 125,
  Academic Press Inc., Publishers, New York, 1965. \MR{0199891 (33 \#8031)}

\bibitem[It{\^o}04]{MR2053326}
Kiyosi It{\^o}, \emph{Stochastic processes}, Springer-Verlag, Berlin, 2004,
  Lectures given at Aarhus University, Reprint of the 1969 original, Edited and
  with a foreword by Ole E. Barndorff-Nielsen and Ken-iti Sato. \MR{2053326}

\bibitem[Jor18]{jorgensen2018harmonic}
Palle E.~T. Jorgensen, \emph{Harmonic analysis: Smooth and non-smooth}, CBMS
  Regional Conference Series in Mathematics, Conference Board of the
  Mathematical Sciences, 2018.

\bibitem[JP98]{MR1655831}
Palle E.~T. Jorgensen and Steen Pedersen, \emph{Dense analytic subspaces in
  fractal {$L^2$}-spaces}, J. Anal. Math. \textbf{75} (1998), 185--228.
  \MR{1655831}

\bibitem[JP11]{MR2862151}
Palle E.~T. Jorgensen and Erin P.~J. Pearse, \emph{Gel\cprime fand triples and
  boundaries of infinite networks}, New York J. Math. \textbf{17} (2011),
  745--781. \MR{2862151 (2012k:05233)}

\bibitem[JP14]{MR3246982}
\bysame, \emph{Spectral comparisons between networks with different conductance
  functions}, J. Operator Theory \textbf{72} (2014), no.~1, 71--86.
  \MR{3246982}

\bibitem[JP19]{MR3860446}
\bysame, \emph{Continuum versus discrete networks, graph {L}aplacians, and
  reproducing kernel {H}ilbert spaces}, J. Math. Anal. Appl. \textbf{469}
  (2019), no.~2, 765--807. \MR{3860446}

\bibitem[JPT15]{MR3286496}
Palle Jorgensen, Steen Pedersen, and Feng Tian, \emph{Spectral theory of
  multiple intervals}, Trans. Amer. Math. Soc. \textbf{367} (2015), no.~3,
  1671--1735. \MR{3286496}

\bibitem[JPT16]{MR3559001}
\bysame, \emph{Extensions of positive definite functions}, Lecture Notes in
  Mathematics, vol. 2160, Springer, [Cham], 2016, Applications and their
  harmonic analysis. \MR{3559001}

\bibitem[Jr68]{MR0277027}
Ole~Groth J\o~rsboe, \emph{Equivalence or singularity of {G}aussian measures on
  function spaces}, Various Publications Series, No. 4, Matematisk Institut,
  Aarhus Universitet, Aarhus, 1968. \MR{0277027}

\bibitem[JS18a]{MR3800275}
Palle E.~T. Jorgensen and Myung-Sin Song, \emph{Infinite-dimensional measure
  spaces and frame analysis}, Acta Appl. Math. \textbf{155} (2018), 41--56.
  \MR{3800275}

\bibitem[JS18b]{MR3882025}
\bysame, \emph{Markov chains and generalized wavelet multiresolutions}, J.
  Anal. \textbf{26} (2018), no.~2, 259--283. \MR{3882025}

\bibitem[JT15a]{MR3450534}
Palle Jorgensen and Feng Tian, \emph{Discrete reproducing kernel {H}ilbert
  spaces: sampling and distribution of {D}irac-masses}, J. Mach. Learn. Res.
  \textbf{16} (2015), 3079--3114. \MR{3450534}

\bibitem[JT15b]{MR3390972}
\bysame, \emph{Infinite networks and variation of conductance functions in
  discrete {L}aplacians}, J. Math. Phys. \textbf{56} (2015), no.~4, 043506, 27.
  \MR{3390972}

\bibitem[JT16a]{MR3507188}
\bysame, \emph{Graph {L}aplacians and discrete reproducing kernel {H}ilbert
  spaces from restrictions}, Stoch. Anal. Appl. \textbf{34} (2016), no.~4,
  722--747. \MR{3507188}

\bibitem[JT16b]{MR3670916}
\bysame, \emph{Nonuniform sampling, reproducing kernels, and the associated
  {H}ilbert spaces}, Sampl. Theory Signal Image Process. \textbf{15} (2016),
  37--72. \MR{3670916}

\bibitem[JT16c]{MR3721329}
\bysame, \emph{Positive definite kernels and boundary spaces}, Adv. Oper.
  Theory \textbf{1} (2016), no.~1, 123--133. \MR{3721329}

\bibitem[JT17a]{MR3701541}
P.~Jorgensen and F.~Tian, \emph{Transfer operators, induced probability spaces,
  and random walk models}, Markov Process. Related Fields \textbf{23} (2017),
  no.~2, 187--210. \MR{3701541}

\bibitem[JT17b]{MR3574374}
Palle Jorgensen and Feng Tian, \emph{Generalized {G}ramians: creating frame
  vectors in maximal subspaces}, Anal. Appl. (Singap.) \textbf{15} (2017),
  no.~1, 123--135. \MR{3574374}

\bibitem[JT18]{MR3843552}
\bysame, \emph{Infinite weighted graphs with bounded resistance metric}, Math.
  Scand. \textbf{123} (2018), no.~1, 5--38. \MR{3843552}

\bibitem[KBTB14]{KDB}
Dirk~P. Kroese, Tim Brereton, Thomas Taimre, and Zdravko~I. Botev, \emph{Why
  the monte carlo method is so important today}, Wiley Interdisciplinary
  Reviews: Computational Statistics \textbf{6} (2014), no.~6, 386--392.

\bibitem[KLZ09]{MR2538596}
Victor Kaftal, David~R. Larson, and Shuang Zhang, \emph{Operator-valued
  frames}, Trans. Amer. Math. Soc. \textbf{361} (2009), no.~12, 6349--6385.
  \MR{2538596}

\bibitem[Kol83]{MR735967}
A.~N. Kolmogorov, \emph{On logical foundations of probability theory},
  Probability theory and mathematical statistics ({T}bilisi, 1982), Lecture
  Notes in Math., vol. 1021, Springer, Berlin, 1983, pp.~1--5. \MR{735967}

\bibitem[KOPT13]{MR3005286}
Gitta Kutyniok, Kasso~A. Okoudjou, Friedrich Philipp, and Elizabeth~K. Tuley,
  \emph{Scalable frames}, Linear Algebra Appl. \textbf{438} (2013), no.~5,
  2225--2238. \MR{3005286}

\bibitem[KR60]{MR0133175}
A.~N. Kolmogorov and Ju.~A. Rozanov, \emph{On a strong mixing condition for
  stationary {G}aussian processes}, Teor. Verojatnost. i Primenen. \textbf{5}
  (1960), 222--227. \MR{0133175}

\bibitem[Kre13]{MR3049934}
Christian Kreuzer, \emph{Reliable and efficient a posteriori error estimates
  for finite element approximations of the parabolic {$p$}-{L}aplacian},
  Calcolo \textbf{50} (2013), no.~2, 79--110. \MR{3049934}

\bibitem[LCRK18]{MR3884672}
Harold~A. Lay, Zane Colgin, Viktor Reshniak, and Abdul Q.~M. Khaliq, \emph{On
  the implementation of multilevel {M}onte {C}arlo simulation of the stochastic
  volatility and interest rate model using multi-{GPU} clusters}, Monte Carlo
  Methods Appl. \textbf{24} (2018), no.~4, 309--321. \MR{3884672}

\bibitem[Nas84]{MR741527}
Z.~Nashed, \emph{Operator parts and generalized inverses of linear manifolds
  with applications}, Trends in theory and practice of nonlinear differential
  equations ({A}rlington, {T}ex., 1982), Lecture Notes in Pure and Appl. Math.,
  vol.~90, Dekker, New York, 1984, pp.~395--412. \MR{741527 (85f:47002)}

\bibitem[NP19]{MR3881668}
Bo~Ning and Xing Peng, \emph{The {R}andi\'{c} index and signless {L}aplacian
  spectral radius of graphs}, Discrete Math. \textbf{342} (2019), no.~3,
  643--653. \MR{3881668}

\bibitem[NS13]{MR3024465}
M.~Zuhair Nashed and Qiyu Sun, \emph{Function spaces for sampling expansions},
  Multiscale signal analysis and modeling, Springer, New York, 2013,
  pp.~81--104. \MR{3024465}

\bibitem[Oko16]{MR3526434}
Kasso~A. Okoudjou, \emph{Preconditioning techniques in frame theory and
  probabilistic frames}, Finite frame theory, Proc. Sympos. Appl. Math.,
  vol.~73, Amer. Math. Soc., Providence, RI, 2016, pp.~105--142. \MR{3526434}

\bibitem[Pes13]{MR3204026}
Isaac~Z. Pesenson, \emph{Paley-{W}iener-{S}chwartz nearly {P}arseval frames on
  noncompact symmetric spaces}, Commutative and noncommutative harmonic
  analysis and applications, Contemp. Math., vol. 603, Amer. Math. Soc.,
  Providence, RI, 2013, pp.~55--71. \MR{3204026}

\bibitem[PR16]{MR3526117}
Vern~I. Paulsen and Mrinal Raghupathi, \emph{An introduction to the theory of
  reproducing kernel {H}ilbert spaces}, Cambridge Studies in Advanced
  Mathematics, vol. 152, Cambridge University Press, Cambridge, 2016.
  \MR{3526117}

\bibitem[PS75]{PaSc75}
K.~R. Parthasarathy and K.~Schmidt, \emph{Stable positive definite functions},
  Trans. Amer. Math. Soc. \textbf{203} (1975), 161--174. \MR{0370681 (51
  \#6907)}

\bibitem[QLS18]{MR3848251}
Zhipeng Qiu, Michael~Y. Li, and Zhongwei Shen, \emph{Global dynamics of an
  infinite dimensional epidemic model with nonlocal state structures}, J.
  Differential Equations \textbf{265} (2018), no.~10, 5262--5296. \MR{3848251}

\bibitem[RAKK05]{RAKK05}
G.~J. Rodgers, K.~Austin, B.~Kahng, and D.~Kim, \emph{Eigenvalue spectra of
  complex networks}, J. Phys. A \textbf{38} (2005), no.~43, 9431--9437.
  \MR{2187996 (2006j:05186)}

\bibitem[Sch83]{MR709376}
I.~J. Schoenberg, \emph{Interpolating splines as limits of polynomials}, Linear
  Algebra Appl. \textbf{52/53} (1983), 617--628. \MR{709376}

\bibitem[SD13]{MR3118429}
F.~A. Shah and Lokenath Debnath, \emph{Tight wavelet frames on local fields},
  Analysis (Berlin) \textbf{33} (2013), no.~3, 293--307. \MR{3118429}

\bibitem[Sko13]{MR3046303}
M.~Skopenkov, \emph{The boundary value problem for discrete analytic
  functions}, Adv. Math. \textbf{240} (2013), 61--87. \MR{3046303}

\bibitem[SNFBK10]{NF10}
B{\'e}la Sz.-Nagy, Ciprian Foias, Hari Bercovici, and L{\'a}szl{\'o}
  K{\'e}rchy, \emph{Harmonic analysis of operators on {H}ilbert space},
  enlarged ed., Universitext, Springer, New York, 2010. \MR{2760647
  (2012b:47001)}

\bibitem[SZ07]{MR2327597}
Steve Smale and Ding-Xuan Zhou, \emph{Learning theory estimates via integral
  operators and their approximations}, Constr. Approx. \textbf{26} (2007),
  no.~2, 153--172. \MR{2327597}

\bibitem[vNS41]{MR0004644}
J.~von Neumann and I.~J. Schoenberg, \emph{Fourier integrals and metric
  geometry}, Trans. Amer. Math. Soc. \textbf{50} (1941), 226--251. \MR{0004644}

\bibitem[Wes13]{MR3236858}
Jason Weston, \emph{Statistical learning theory in practice}, Empirical
  inference, Springer, Heidelberg, 2013, pp.~81--93. \MR{3236858}

\bibitem[WO17]{MR3688637}
Clare Wickman and Kasso Okoudjou, \emph{Duality and geodesics for probabilistic
  frames}, Linear Algebra Appl. \textbf{532} (2017), 198--221. \MR{3688637}

\bibitem[ZXZ09]{MR2579912}
Haizhang Zhang, Yuesheng Xu, and Jun Zhang, \emph{Reproducing kernel {B}anach
  spaces for machine learning}, J. Mach. Learn. Res. \textbf{10} (2009),
  2741--2775. \MR{2579912 (2011c:62219)}

\end{thebibliography}

\end{document}